\newtheorem{theorem}{Theorem}[section]
\newtheorem{proposition}[theorem]{Proposition}
\newtheorem{lemma}[theorem]{Lemma}
\newtheorem{definition}[theorem]{Definition}
\newtheorem*{remark}{Remark}
\newcommand{\N}{\mathbb N}
\newcommand{\R}{\mathbb R}
\newcommand{\T}{\mathbb T}
\newcommand{\cW}{W}
\newcommand{\eps}{\varepsilon}
\newcommand{\e}{\eps}
\newcommand{\dd}{\, \mathrm{d}}
\newcommand{\tr}{\mbox{tr}}
\newcommand{\Tr}{\mbox{tr}}
\newcommand{\1}{\mathds{1}}
\newcommand{\vv}{\langle v\rangle}
\newcommand{\vvO}{\langle v_0\rangle}
\newcommand{\ul}{\rm ul}
\newcommand{\vve}{\langle v_\e\rangle}
\DeclareMathOperator{\supp}{supp}
\newcommand{\Ckin}{C_{\rm kin}}
\numberwithin{equation}{section}
\title[Solutions of the Landau equation with rough data]{Local solutions of the Landau equation with rough, slowly decaying initial data}
\author{Christopher Henderson}
\address{Department of Mathematics, University of Arizona, Tuscon, AZ 85721}
\email{ckhenderson@math.arizona.edu}
\author{Stanley Snelson}
\address{Department of Mathematical Sciences, Florida Institute of Technology, Melbourne, FL 32901}
\email{ssnelson@fit.edu}
\author{Andrei Tarfulea}
\address{Department of Mathematics, Louisiana State University, Baton Rouge, LA 70803}
\email{tarfulea@lsu.edu}
\thanks{
AT was partially supported by NSF grant DMS-1816643. SS was partially supported by a Ralph E. Powe Award from ORAU.  CH was partially supported by NSF grant DMS-1907853.}
\begin{document}

\begin{abstract}
We consider the Cauchy problem for the spatially inhomogeneous Landau equation with soft potentials in the case of large (i.e. non-perturbative) initial data. We construct a solution for any bounded, measurable initial data with uniform polynomial decay in the velocity variable, and that satisfies a technical lower bound assumption (but can have vacuum regions). For uniqueness in this weak class, we have to make the additional assumption that the initial data is H\"older continuous. Our hypotheses are much weaker, in terms of regularity and decay, than previous large-data well-posedness results in the literature. We also derive a continuation criterion for our solutions that is, for the case of very soft potentials, an improvement over the previous state of the art.
\end{abstract}

\maketitle

\section{Introduction}

We consider the spatially inhomogeneous Landau equation in the whole space: for $(t,x,v) \in \R_+\times \R^3\times\R^3$, the solution $f(t,x,v)\geq 0$ satisfies
\begin{equation}\label{e:landau}
(\partial_t + v\cdot \nabla_x) f = Q_L(f,f) = \nabla_v\cdot (\bar a^f\nabla_v f) + \bar b^f\cdot \nabla_v f + \bar c^f f,
\end{equation}
where the nonlocal coefficients are defined by
\begin{equation}\label{e:coeffs}
\begin{split}
\bar a^f(t,x,v) &:= a_{\gamma}\int_{\R^3} \left( I - \frac w {|w|} \otimes \frac w {|w|}\right) |w|^{\gamma + 2} f(t,x,v-w) \dd w,\\
\bar b^f(t,x,v) &:= b_{\gamma}\int_{\R^3} |w|^\gamma w f(t,x,v-w)\dd w,\\
\bar c^f(t,x,v) &:= c_{\gamma}\int_{\R^3} |w|^\gamma f(t,x,v-w)\dd w, 
\end{split}
\end{equation}
and $a_{\gamma}, b_\gamma, c_\gamma$ are constants with $a_\gamma, c_\gamma > 0$. We are concerned with the case of \emph{soft potentials}, i.e. $\gamma \in [-3,0)$. When $\gamma = -3$, the formula for $\bar c^f$ must be replaced by $\bar c^f = cf$. In fact, the constants are such that $\bar b^f_i = -\sum \partial_{v_j} \bar a^f_{ij}$, so that if $f\in C^2_v$, \eqref{e:landau} may equivalently be written in non-divergence form: 
\begin{equation}\label{e:landau_nd}
	(\partial_t  + v\cdot \nabla_x) f = \tr(\bar a^f D_v^2 f) + \bar c^f f.
\end{equation}
We use both forms of the equation.

The $\gamma = -3$ case of \eqref{e:landau}, called the Landau-Coulomb equation, is used in plasma physics to model the density $f(t,x,v)$ of charged particles in phase space as it evolves in time. For $\gamma > -3$, the equation arises as a formal limit of the Boltzmann equation as grazing collisions (such that the angle between pre- and post-collisional velocites goes to zero) predominate. In addition to its own interest, part of the motivation for studying the Landau equation (particularly the non-Coulomb cases) is to shed light on the Boltzmann equation without angular cutoff 
(see, e.g., \cite{lifschitzpitaevskii,chapmancowling} or \cite{alexandre2004landau} for the physical background). 

Until relatively recently, the theory of classical solutions of the inhomogeneous Landau equation focused mainly on 
global-in-time solutions that are perturbations of a (global) Maxwellian equilibrium state (i.e. $\mu(t,x,v) = c_1 e^{-c_2|v|^2}$ with $c_1,c_2>0$). 
%
This study began with the work of Guo \cite{guo2002periodic} in 2002 and was subsequently extended by many authors; 
see Subsection \ref{s:work} below for a partial bibliography.

In the last few years, there has been a lot of progress on the non-perturbative case, including conditional regularity estimates 
and short-time existence results for large (i.e. non-perturbative) initial data. 
A major motivation for these works is the outstanding open problem of global existence for \eqref{e:landau} with large initial data.

Our goal in this article is to improve the local well-posedness theory for \eqref{e:landau}. There are two previous large-data existence results we are aware of: He-Yang \cite{he2014boltzmannlandau} constructed solutions for the case $\gamma = -3$ with spatially periodic initial data satisfying $ |v|^P f_{\rm in} \in H^7(\R^6)$, for $P$ an exponent on the order of $100$, and in our previous work \cite{HST2018landau}, we constructed solutions for initial data satisfying $e^{\rho |v|^2} f_{\rm in} \in H^4(\R^6)$ for some $\rho >0$.\footnote{More precisely, \cite{HST2018landau} took $f_{\rm in}$ in a ``uniformly local'' space $H^4_{\rm ul}$, which imposes no decay as $|x|\to \infty$.} The strength of these hypotheses, in terms of regularity and decay, is comparable to the state of the art for the non-cutoff Boltzmann equation \cite{amuxy2010regularizing, amuxy2011bounded, amuxy2013mild} but seems unsatisfactory when compared to (i) what is needed to make sense of the equation and (ii) the conditions required for good \emph{a priori} control of solutions, which are zeroth-order and impose only mild polynomical decay in $v$ (see \Cref{s:conditional}). 
The results in this paper go a long way toward bridging this gap, by proving existence of solutions in a merely polynomially-weighted $L^\infty$ space, which requires no control on derivatives or exponential moments of $f_{\rm in}$. To prove uniqueness, we need the initial data to additionally be H\"older continuous. We also improve the conditions under which solutions can be extended past a given time.

\subsection{Main results}
In order to state our results, we define weighted Lebesgue spaces as follows: let $\vv = \sqrt{1+|v|^2}$, $1\leq p \leq \infty$, $k\in\R$, and $\Omega$ be any set of the form $\R^3$ or $\omega \times \R^3$ with the $\R^3$ variable denoted $v$, then define
\[
	\|g\|_{L^{p,k}(\Omega)} = \| \vv^k g \|_{L^p(\Omega)}.
\]
We also require the kinetic H\"older spaces, denoted $\Ckin^\alpha$ and $\Ckin^{2,\alpha}$ for $\alpha\in(0,1)$ and defined in \Cref{sec:notation}, that are analogous to the standard parabolic H\"older spaces.  We use the standard H\"older spaces, $C^\alpha$ and $C^{2,\alpha}$, as well. 

One quantity that plays a large role in our analysis is
\begin{equation}\label{e:Psi}
	\Psi(t)
		:= \begin{cases} \| f\|_{L^\infty_{t,x} L^{1,2}_v([0,t]\times\R^6)}, &\gamma \in (-2,0),\\
			\|f\|_{L^\infty_{t,x} L^{1}_v ([0,t]\times\R^6) }+ \|f\|_{L^\infty_{t,x}L^p_v(\R^6)}, &\gamma\in [-3,-2],
		\end{cases}
\end{equation}
 where $p>3/(3+\gamma)$ and $p=\infty$ for $\gamma = -3$. In the case $\gamma\in[0,2]$, $\Psi$ is related to the hydrodynamic quantities of mass density and energy density (see Section \ref{s:conditional} below).  Notice that $\Psi$ is bounded by the $L^{\infty,k}$ norm of $f$ if $k > 5$.

We require the following condition on our initial data, recalled from \cite{HST2018landau}. It ensures that the self-generating lower bounds on our solution are uniform in $x$ (after a small time has passed):
\begin{definition}\label{d:well}
A function $g: \R^3 \times \R^3 \to [0,\infty)$ is \emph{well-distributed with parameters $R, \delta, r>0$} if, for every $x \in \R^3$, there exists $x_m \in B_{R}(x)$ and $v_m \in B_R(0)$ such that $g \geq \delta \1_{B_r(x_m,v_m)}$.
\end{definition}

Our first result establishes the existence of solutions in $L^{\infty,k}([0,T]\times \R^6)$:

\begin{theorem}\label{t:exist}
	Let $k> \max\{5,15/(5+\gamma)\}$. Assume $\|f_{\rm in}\|_{L^{\infty,k}(\R^6)} \leq K$, $f_{\rm in}(x,v) \geq 0$ in $\R^6$, and $f_{\rm in}$ is well-distributed with parameters $\delta$, $r$, and $R$. Then:
	\begin{enumerate}[label=(\roman*)]
	\item {\bf (Existence)}  There exists $T>0$ and $f \geq 0$, such that, for any compact $\Omega \subset (0,T]\times \R^6$, $f \in \Ckin^{2,\alpha}(\Omega)$ for some $\alpha \in (0,1)$, and $f$ satisfies~\eqref{e:landau} classically.  Each of $T$ and $\|f\|_{L^{\infty,k}([0,T]\times \R^6)}$ depend only on $\gamma$, $K$, $\delta$, $r$, and $R$, and $\alpha$ depends on the same quantities plus $\Omega$.

	
	\item {\bf (Matching with initial data)} For any compact $K_x \subset \R^3$ and any $\phi(t,x,v)$ satisfying, for some $\eta > 0$, 
	\[\begin{split}
	&~~\phi \in C_t L_{x,v}^{2,-7/2+\eta}([0,T]\times K_x\times\R^3)),
	\quad (\partial_t + v\cdot\nabla_x)\phi \in L^{2,-7/2+\eta}([0,T]\times K_x\times \R^3)\\
	&~~\nabla_v \phi \in L^{2,1+\gamma/2}([0,T]\times K_x \times \R^3)
	\quad \text{and} \quad \supp(\phi) \subset [0,T)\times K_x \times \R^3,
	\end{split}\]
	the solution $f$ constructed in (i) satisfies
	\begin{equation}\label{e:weak_equation}
		\begin{split}
			&\int_{\R^6} f_{\rm in}(x,v) \phi(0,x,v) \dd x \dd v\\
				&\qquad = \int_{[0,T]\times \R^6} \left(f (\partial_t + v\cdot \nabla_x) \phi - \nabla_v \phi \cdot( \bar a^{f} \nabla_v f) - f \bar b^{f} \cdot \nabla_v \phi \right) \dd x \dd v \dd t.
		\end{split}
	\end{equation}
	\end{enumerate}

	\begin{enumerate}
	\item[(iii)] {\bf (Higher regularity)} Let $T_E$ be the maximal time of existence of the solution constructed above, and let $\bar T \in (0,T_E]$.  For any partial derivative $\partial_t^j \partial_x^\beta \partial_v^\eta$, there exists $k_M >0$ and $\ell_M$ depending only on $\Psi(\bar T)$, $\bar T$, $\gamma$, $\delta$, $r$, $R$, and $M = 2j + 3|\beta| + |\eta|$, such that if $f_{\rm in} \in L^{\infty,k_M}(\R^6)$, then 
	\[
		\partial_t^j \partial_x^\beta \partial_v^\eta f \in L^{\infty,k_M - \ell_M}([0,\bar T]\times \R^6)
	\]
	and is continuous for all positive times.
	\end{enumerate}
\end{theorem}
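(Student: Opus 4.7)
The overall strategy is approximation and compactness. I would regularize $f_{\rm in}$ by a sequence $f_{\rm in}^\eps$ that is smooth, rapidly decaying, satisfies the well-distributed condition uniformly in $\eps$ (possibly with slightly degraded parameters $R',\delta',r'$), and has $\|f_{\rm in}^\eps\|_{L^{\infty,k}} \leq K + o(1)$. A convolution-plus-cutoff-plus-small-Gaussian construction in the variable $v$ (and similarly in $x$, truncating softly at infinity) does the job. The smooth existence theory of \cite{HST2018landau} then provides short-time classical solutions $f^\eps \geq 0$ on some time interval $[0,T_\eps]$. The task is to extract estimates on $f^\eps$ that are uniform in $\eps$, depend only on $\gamma, K, \delta, r, R$, and live on a common interval $[0,T]$, so that one can pass to the limit.

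The core uniform estimate is an $L^{\infty,k}$ upper bound on $f^\eps$ over such a common $[0,T]$. I would construct a supersolution of the form $A(t)\vv^{-k}$ for an increasing $A$: testing \eqref{e:landau_nd} against $\vv^{-k}$ produces error terms from $\bar a^{f^\eps} D_v^2 \vv^{-k}$ and $\bar c^{f^\eps} \vv^{-k}$, each controlled by suitable moments of $f^\eps$, which in turn are bounded by $\Psi$ and ultimately by $\|f^\eps\|_{L^{\infty,k}}$ once $k > \max\{5, 15/(5+\gamma)\}$. A Gronwall-style continuation on $A$ then yields the uniform upper bound on a short interval $[0,T]$ depending only on the listed parameters. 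Complementarily, the well-distributed hypothesis propagates to a uniform-in-$\eps$ pointwise lower bound on $f^\eps$ for positive times via the spreading-of-support/barrier argument of \cite{HST2018landau}. Together, these give uniform (weighted) ellipticity of $\bar a^{f^\eps}$ on compact velocity sets, plus uniform bounds on $\bar b^{f^\eps}, \bar c^{f^\eps}$. Kinetic De Giorgi--Nash--Moser and Schauder estimates for linear kinetic Fokker--Planck equations then yield, on each compact $\Omega \subset (0,T]\times\R^6$, $\Ckin^\alpha$ and subsequently $\Ckin^{2,\alpha}$ bounds uniform in $\eps$. Arzel\`a--Ascoli together with a diagonal extraction produces the limit $f$ solving \eqref{e:landau} classically on $(0,T]\times\R^6$.

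For (ii), I would start from the weak formulation of \eqref{e:landau} satisfied by $f^\eps$ against $\phi$ and pass to the limit. The weight hypotheses on $\phi$ are precisely calibrated so that each of the four terms in \eqref{e:weak_equation} is absolutely integrable in view of the pointwise bounds $|\bar a^f(v)| \lesssim \vv^{\gamma+2}$, $|\bar b^f(v)| \lesssim \vv^{\gamma+1}$ and $f \in L^{\infty,k}$; locally uniform convergence of $f^\eps$ on positive times, weak convergence of $\nabla_v f^\eps$ in the weighted space governed by $\vv^{1+\gamma/2}$, and the matching $f_{\rm in}^\eps \to f_{\rm in}$ tested against $\phi(0,\cdot)$ then give the identity. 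For (iii), I would bootstrap: given $\Ckin^{2,\alpha}$ regularity, each derivative $\partial_t^j \partial_x^\beta \partial_v^\eta f$ satisfies a linear kinetic equation whose coefficients involve lower-order derivatives of $f$ and their associated nonlocal coefficients. Iterating the kinetic Schauder estimates inductively yields the stated higher regularity; each step trades polynomial decay for a derivative because differentiating the nonlocal coefficients increases the velocity weight needed to keep the convolutions integrable, which explains the appearance and growth of $\ell_M$ and $k_M$.

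The hardest step is the $\eps$-uniform $L^{\infty,k}$ upper bound. For $\gamma < 0$, the coefficients couple the pointwise behavior of $f^\eps$ to integrals of its full tail, so one must choose the barrier carefully to ensure that the nonlocal error terms are dominated by the barrier itself at every $v$; this is especially delicate in the borderline regime $k$ just above $15/(5+\gamma)$, which forces the $L^p$ component in $\Psi$ for very soft potentials and explains the asymmetric thresholds in the statement. The lower bound, interior Schauder theory, and the bootstrap for (iii) are by now essentially standard tools, so I expect the novelty to concentrate in this barrier construction and in verifying that the regularization procedure preserves both the weighted norm and the well-distributed parameters uniformly.
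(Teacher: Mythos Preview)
Your proposal is correct and follows essentially the same route as the paper: regularize $f_{\rm in}$ by mollification and velocity cutoff, invoke the smooth existence theory of \cite{HST2018landau}, derive an $\eps$-uniform $L^{\infty,k}$ bound via the barrier $e^{\beta t}\vv^{-k}$, obtain lower bounds and hence ellipticity from the well-distributed hypothesis, and then apply kinetic De Giorgi and Schauder estimates (after the anisotropic rescaling of \cite{cameron2017landau,henderson2017smoothing}) to extract a limit by compactness; parts (ii) and (iii) are handled exactly as you describe.

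One point to sharpen: the a priori $L^{\infty,k}$ estimate alone does not guarantee that the approximate solutions $f^\eps$ exist on a common interval $[0,T]$ independent of $\eps$---it only says that \emph{if} $f^\eps$ exists on $[0,T]$ then its weighted norm is controlled there. The paper closes this gap by invoking the continuation criterion of \cite{HST2018landau} (continuation as long as $\tilde\Psi$ stays finite), and the threshold $k>\max\{5,15/(5+\gamma)\}$ is chosen precisely so that the $L^{\infty,k}$ norm controls $\tilde\Psi$. Your phrase ``Gronwall-style continuation'' may well have this in mind, but it is worth making the two-step logic explicit: first the barrier gives the a priori bound, then the continuation criterion converts that bound into a uniform existence time.
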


A few comments are in order.  Firstly, if the initial data is continuous, we can show that $f$ does match the initial data in the pointwise sense (see Proposition \ref{p:initial}).  Secondly, by ``classical'' solutions, we mean those elements of $C^{2,\alpha}_{\rm kin, loc}$ that satisfy~\eqref{e:landau} pointwise; this does not necessitate that $\partial_t f$ and $\nabla_x f$ exist pointwise but rather that $(\partial_t + v\cdot\nabla_x)f$ does (see \Cref{sec:notation} for a discussion of the kinetic H\"older spaces).  Lastly, we point out that \Cref{t:exist}.(iii) implies that if $f_{\rm in} \in L^{\infty,k}$ for all $k$, then $f$ is smooth and has infinitely many moments, that is, $\partial_t^j \partial_x^\beta \partial_v^\eta \in L^{\infty,k}$ for any $j$, $\beta$, $\eta$, and $k$, for as long as $\Psi$ remains bounded.

Next, we extend our continuation criterion from \cite{HST2018landau} to the solutions of Theorem \ref{t:exist}, and sharpen it in the case of very soft potentials:
\begin{theorem}\label{t:continuation}
The solution $f$ constructed in Theorem \ref{t:exist} can be extended for as long as the quantity $\Psi$ remains finite, where $p>3/(3+\gamma)$ and $p=\infty$ for $\gamma = -3$.   More formally, $T_E = \sup\{t \geq 0 : \Psi(t) < \infty\}$, where $T_E$ is defined in \Cref{t:exist}.
\end{theorem}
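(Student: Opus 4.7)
The plan is to argue by contradiction. Suppose $T_E < \infty$ but $\Psi(T_E) < \infty$. I would aim to restart \Cref{t:exist} at some time $t_0$ slightly before $T_E$, using $f(t_0,\cdot,\cdot)$ as the new initial datum, and show that the resulting new local existence time $T'$ depends only on quantities that are uniform in $t_0$ and can be arranged so that $t_0 + T' > T_E$. Concatenating with the original solution would then contradict the maximality of $T_E$. For this restart to succeed, I need to verify that $f(t_0,\cdot,\cdot)$ satisfies the hypotheses of \Cref{t:exist}---bounded weighted $L^\infty$ norm together with well-distributedness in the sense of \Cref{d:well}---with parameters $K, \delta, r, R$ independent of $t_0 \in [T_E/2, T_E)$.

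The central step is to show that $\Psi(T_E) < \infty$ upgrades to
\[
\sup_{t \in [0, T_E)} \|f(t,\cdot,\cdot)\|_{L^{\infty,k}(\R^6)} \leq C,
\]
with $C$ depending only on $\Psi(T_E)$, the initial data, and the parameters of the equation. The design of $\Psi$ is tailored precisely so that it controls the Landau coefficients $\bar a^f$, $\bar b^f$, $\bar c^f$ pointwise in $(t,x)$: an $L^{1,2}_v$ bound suffices for $\gamma \in (-2,0)$, while for very soft potentials the $L^1_v$ bound plus $L^p_v$ bound with $p > 3/(3+\gamma)$ is required to tame $\bar c^f$. I expect the weighted $L^\infty$ estimate to follow from a maximum principle or barrier argument applied to $\vv^k f$ in the non-divergence form \eqref{e:landau_nd}: commutator terms arising from sliding $\vv^k$ through the Landau operator are controlled by $\Psi$, and a Gr\"onwall closure yields the bound. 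The threshold $k > \max\{5, 15/(5+\gamma)\}$ in \Cref{t:exist} is consistent with this, since for $k > 5$ the weighted norm dominates $\Psi$, closing the logical loop. For the lower bound, I would propagate the well-distributedness of $f_{\rm in}$ by invoking the self-generating lower bounds from \cite{HST2018landau}, which persist uniformly in $x$ on $[\tau, T_E)$ for any $\tau > 0$ as long as $\bar a^f$ stays uniformly elliptic---again a consequence of the $\Psi$ bound. Higher-regularity at $t_0$, as needed to rigorously invoke \Cref{t:exist} at the restart, is supplied directly by \Cref{t:exist}.(iii).

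The main obstacle I anticipate is the weighted $L^\infty$ bound itself, especially in the very soft potential range $\gamma \in [-3,-2]$. There, $|w|^\gamma$ fails to be locally integrable, so an $L^1_v$ bound alone cannot control $\bar c^f$ in $L^\infty_{t,x}$; one must exploit the $L^p_v$ piece of $\Psi$ (with $p$ just above the scaling-critical threshold $3/(3+\gamma)$) and verify that the resulting pointwise coefficient bounds remain compatible with the weight $\vv^k$ throughout the barrier argument. This interplay is where the improvement over the continuation criterion in \cite{HST2018landau} lives. A secondary technical point is that the barrier construction must be uniform in $x \in \R^3$; this is available because $\Psi$ is an $L^\infty_x$ quantity, so no uniform-locality reductions are needed and the weighted estimate is naturally global in $x$.
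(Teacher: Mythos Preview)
Your outline matches the paper's argument in structure---upgrade $\Psi$ to an $L^{\infty,k}$ bound via the barrier estimate (Lemma~\ref{l:infty}), propagate well-distributedness via the self-generating lower bounds (Lemma~\ref{l:lower-bounds}), and restart Theorem~\ref{t:exist}. For $\gamma\in[-3,-2]$ this is exactly what the paper does.

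There is, however, a genuine gap in your treatment of the range $\gamma\in(-2,0)$. You assert that ``an $L^{1,2}_v$ bound suffices for $\gamma\in(-2,0)$'' to control all the coefficients, but this is false for $\bar c^f$: by Lemma~\ref{l:mass-energy}, $|\bar c^f|\lesssim \|f\|_{L^{3/(3+\gamma)+\eta}_v}+\|f\|_{L^1_v}$, and the first norm is \emph{not} dominated by $\|f\|_{L^{1,2}_v}$. Consequently the barrier argument of Lemma~\ref{l:infty} does not close from $\Psi$ alone in this range; as stated, it requires $K=\Psi+\|f\|_{L^\infty_{t,x}L^p_v}$ for all $\gamma\in[-3,0)$ (see the remark immediately following Lemma~\ref{l:infty}).

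The paper supplies the missing ingredient you omitted: for $\gamma\in(-2,0)$ one first invokes the a~priori $L^\infty$ estimate of \cite[Theorem~1.1]{cameron2017landau}, which bounds $\|f\|_{L^\infty([T'/2,T']\times\R^6)}$ in terms of $\|f\|_{L^\infty_{t,x}L^{1,2}_v}$ and the lower ellipticity constant of $\bar a^f$ (the latter coming from Lemma~\ref{l:lower-bounds}). This furnishes the needed $L^p_v$ control, after which Lemma~\ref{l:infty} applies and your restart argument goes through. Without this step the proof for $\gamma\in(-2,0)$ is incomplete.
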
 
  

Because the solution constructed in Theorem \ref{t:exist} lies in a weak space relative to the order of the equation, uniqueness is a challenging issue, and in fact our proof requires stronger assumptions on $f_{\rm in}$: H\"older continuity and a lower bound assumption that rules out vacuum regions in $x$ (see \Cref{s:ideas} below for an explanation of these extra hypotheses). The uniqueness or non-uniqueness of the solutions of Theorem \ref{t:exist}, without any extra assumptions on $f_{\rm in}$, remains an open question.

In nonlinear equations where the initial data has low regularity, it often more difficult to prove uniqueness than existence of solutions even when they smooth immediately.  For another example of such a situation, where uniqueness has not been established without extra hypotheses, even though the system regularizes instantantly, see the work of Kiselev-Nazarov-Shterenberg \cite{kiselev2008blowup} on the fractal Burgers equation (i.e., Burgers equation with fractional dissipation).

\begin{theorem}\label{t:unique}
Assume that $f_{\rm in}$ satisfies:
	\begin{itemize}
		\item $f_{\rm in} \in C^\alpha$ for some $\alpha \in (0,1)$, and $f_{\rm in} \in L^{\infty,k}$ for some $k$;
		\item there exist $r,\delta, R>0$ such that for each $x\in \R^3$, there exists $|v_x|<R$ such that $f_{\rm in} \geq \delta \1_{B_r(x,v_x)}$.
	\end{itemize}
	Let $f$ be the solution constructed in \Cref{t:exist}, and let $T>0$ be any time such that $\Psi(T) < \infty$.  Then there exists $k_\alpha$ and $T_H\in (0,T]$ such that if $k \geq k_\alpha$, then $f\in\Ckin^\alpha([0,T_H]\times \R^6)$ and, for any uniformly continuous $g\in L^{\infty,5+\gamma+\eta}([0,T_H]\times \R^6)$ with $\eta >0$ such that $g$ solves \eqref{e:landau} weakly and $g(t,x,v) \to f_{\rm in}$ as $t\to 0+$, we have $f=g$. 
	 Moreover, if $f_{\rm in} \in L^{\infty,k}$ for all $k \geq 0$, then the above holds with $T_H = T$.  We have that $k_\alpha$ depends only on $\alpha$ and $\gamma$ and $T_H$ depends only on $r$, $\delta$, $R$, $k$, $\alpha$, $\gamma$, $\|f_{\rm in}\|_{L^{\infty,k}}$, and $\|f_{\rm in}\|_{C^\alpha}$
\end{theorem}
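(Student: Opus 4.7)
The plan is to prove uniqueness via a Grönwall-type stability estimate on $h = f - g$. For this, we first need $f$ itself to have enough regularity up to the initial time to serve as a reference solution; this is the content of the first assertion of the theorem. The proof thus splits into two pieces: (Step 1) upgrading $f$ to $\Ckin^\alpha$ up to $t=0$, and (Step 2) a weighted energy stability estimate exploiting the bilinear structure of $Q_L$.

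For Step 1, the well-distributedness of $f_{\rm in}$ and the construction of $f$ in \Cref{t:exist} yield, via the standard lower-bound propagation for the Landau equation, a uniform positive lower bound on $f$, and hence uniform ellipticity of $\bar a^f$ on bounded velocity balls. Combined with the $L^{\infty,k}$ bound on $f$ (giving $L^\infty$ control of $\bar a^f, \bar b^f, \bar c^f$ with appropriate $v$-weights), \eqref{e:landau_nd} becomes a linear kinetic parabolic equation for $f$ with uniformly elliptic, bounded coefficients. Applying the kinetic Schauder theory (as developed by Imbert--Silvestre and used in \cite{HST2018landau}), the $C^\alpha$ regularity of $f_{\rm in}$ propagates inward to give $f \in \Ckin^\alpha([0,T_H] \times \R^6)$. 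The weight threshold $k_\alpha$ emerges from matching the $v$-growth of the coefficients against the $v$-decay of $f$, and $T_H$ depends on the parameters listed in the theorem. Combining with \Cref{t:exist}.(iii), $f$ is also $C^{2,\alpha}_{\rm kin}$ locally for $t > 0$, so $\nabla_v f$ and $D^2_v f$ are available (with possibly integrable blowup at $t=0$) for the stability estimate below.

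For Step 2, extending $Q_L$ as a bilinear form, the difference solves, in weak form,
\begin{equation*}
(\partial_t + v\cdot\nabla_x) h = Q_L(g, h) + Q_L(h, f).
\end{equation*}
Multiplying by $h \langle v\rangle^{2m}$ and integrating, the $Q_L(g,h)$ piece contributes, after integration by parts, a coercive diffusive term (ellipticity of $\bar a^g$ follows from $g$ inheriting a short-time lower bound from $f_{\rm in}$ via the uniform continuity of $g$) plus lower-order pieces bounded by $\|h\|_{L^{2,m}}^2$. The cross term $Q_L(h,f)$ is estimated by Cauchy--Schwarz, absorbing $\nabla_v h$ into the coercive term and using the regularity of $f$ from Step 1 to control $\nabla_v f$ (or $D^2_v f$, after a further integration by parts), together with pointwise bounds such as $|\bar a^h(t,x,v)| \lesssim \|h(t,x,\cdot)\|_{L^1_v}\langle v\rangle^{\gamma+2}$. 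The net result is $\frac{d}{dt}\|h\|_{L^{2,m}}^2 \leq C(t)\|h\|_{L^{2,m}}^2$ with $C \in L^1([0,T_H])$; since $h(0)=0$, Grönwall forces $h \equiv 0$, hence $f=g$ on $[0,T_H]$. The global-in-time version of the statement (for $f_{\rm in}\in L^{\infty,k}$ for all $k$) iterates this argument on successive time intervals, using \Cref{t:exist}.(iii) to propagate the high regularity of $f$ needed at each step.

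The main obstacle is Step 1: propagating Hölder regularity up to $t=0$ is delicate because the coefficients $\bar a^f,\bar b^f,\bar c^f$ near $t=0$ inherit only $L^\infty$ (not Hölder) regularity from the $L^\infty$ bound on $f$. The kinetic Schauder estimate must therefore be set up so that $C^\alpha$ \emph{initial data} (rather than $C^\alpha$ coefficients) drives the conclusion, which requires a careful choice of the weighted kinetic Hölder norms. A secondary technical issue arises in justifying the energy estimate in Step 2 when $g$ is only uniformly continuous: this can be handled either by approximating $g$ via smooth solutions of regularized problems, or by working in the weak formulation throughout and using a double integration by parts in $v$ to move all derivatives onto smooth test functions, avoiding any direct use of $\nabla_v g$.
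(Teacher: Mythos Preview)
Your two-step architecture is correct, but both steps as written have genuine gaps that the paper resolves by quite different methods.

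For Step 1, you correctly flag the obstacle: Schauder estimates require $C^\alpha$ coefficients, but $\bar a^f,\bar c^f$ are $C^\alpha$ only if $f$ already is, so the argument is circular. Your proposed fix (``careful choice of weighted norms'') does not break this circularity, and in fact no direct Schauder iteration does. The paper instead uses a doubling-of-variables maximum principle (Section~\ref{s:holder}): it sets
\[
g(t,x,v,\chi,\nu)=\frac{|f(t,x+\chi,v+\nu)-f(t,x,v)|^2}{(|\chi|^2+|\nu|^2)^\alpha}\langle v\rangle^{2m},
\]
so that $\|g(t)\|_{L^\infty}$ encodes the $C^\alpha_{x,v}$ seminorm of $f$, derives the PDE satisfied by $g$, and shows all terms either have a good sign at a maximum or are bounded by $C(t)g$. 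Schauder enters only indirectly, to produce the time-singular bound $\|D_v^2 f(t)\|_{L^\infty}\lesssim t^{-1+\alpha^2/(6-\alpha)}(1+\|\langle v\rangle^m f\|_{C^\alpha})^{p(\alpha)}$ needed to control the cross term $\tr(\bar a^{\delta f}D_v^2\tau f)$; this feeds back into a nonlinear ODE for $\|g(t)\|_{L^\infty}$ that stays finite on some $[0,T_H]$.

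For Step 2, the weighted $L^{2,m}$ energy estimate cannot close on $\R^3_x\times\R^3_v$: neither $f$ nor $g$ decays in $x$, so $\|h\|_{L^{2,m}_{x,v}}$ is generally infinite, and localizing in $x$ produces the transport term $h^2\,v\cdot\nabla_x\varphi$, unbounded in $v$ (this is precisely the obstruction the paper discusses in Section~\ref{s:ideas}). The paper's uniqueness proof (Proposition~\ref{prop:uniqueness_small_times}) is pointwise, not $L^2$: with $w=e^{-\int_0^t r}(g-f)$ and $W=\tfrac12\langle v\rangle^{10}w^2$, a first-crossing argument shows $W$ never reaches any level $\varepsilon>0$, using that at a maximum $\nabla_vW=0$, $D_v^2W\le0$, $(\partial_t+v\cdot\nabla_x)W\ge0$, and the dangerous term $\langle v\rangle^{10}w\,\tr(\bar a^wD_v^2 f)$ is controlled via $|\bar a^w|\lesssim\langle v\rangle^{7+\gamma}\|w\|_{L^\infty}$ together with the time-integrable bound on $D_v^2 f$ from Step~1. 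Choosing $r(t)\sim t^{-1+\alpha^2/(6-\alpha)}$ then yields the contradiction.
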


Let us make some brief comments on \Cref{t:unique}.  
First, our notion of weak solution is made precise in Section \ref{s:unique} (see the comment after Proposition \ref{prop:uniqueness_small_times}). Second, our assumptions on $f_{\rm in}$ imply, via interpolation between $C^\alpha$ and $L^{\infty,k}$, that 
\[
	\vv^m f \in C^\beta(\R^6) \qquad \text{if } m \leq k\left(1 - \frac{\beta}{\alpha}\right),
\]
see \Cref{lem:holder_decay}. For the sake of convenience, we prove that the conclusion of Theorem \ref{t:unique} holds under the assumption that $f_{\rm in}\in L^{\infty,k}(\R^6)$ and $\vv^m f_{\rm in} \in C^\alpha(\R^6)$ for $m$ and $k$ sufficiently large, depending on $\alpha$ and $\gamma$ (see Proposition \ref{prop:holder_propagation}), which, by the above, is enough to establish \Cref{t:unique}. 
Finally, we note that the positivity condition, that is, the existence of $\delta$, $r$, and $R$, is met in many standard cases such as for initial data that is continuous, periodic, and positive.

\subsection{Conditional regularity and continuation}\label{s:conditional}

This paper fits within the program of seeking weaker conditions under which solutions of the Landau equation remain smooth and can be extended past a given time. To describe this recent thread of research in more detail, let us recall the following hydrodynamic quantities associated to the solution $f$:
\begin{align*}
	M_f(t,x) &= \int_{\R^3} f(t,x,v)\dd v, &&\mbox{(mass density)}\\
	E_f(t,x) &= \int_{\R^3} |v|^2 f(t,x,v)\dd v, &&\mbox{(energy density)}\\
	H_f(t,x) &= \int_{\R^3} f(t,x,v)\log f(t,x,v) \dd v. &&\mbox{(entropy density)}
\end{align*}
We note that, in the homogeneous (i.e., $x$-independent) setting, $M_f(t)$ and $E_f(t)$ are conserved and $H_f(t)$ is non-increasing, and this behavior is crucial to the well-posedness theory.

In \cite{golse2016}, Golse-Imbert-Mouhot-Vasseur established local H\"older regularity for solutions of~\eqref{e:landau} with $M_f$ bounded above and away from zero, and $E_f$, $H_f$, and $\|f(t,x,\cdot)\|_{L^\infty_v}$ bounded above, uniformly in $t$ and $x$. Under these conditions, the coefficients in \eqref{e:coeffs} are all bounded above, and $\bar a^f$ is uniformly elliptic. The equation can then be treated as a linear kinetic Fokker-Planck equation with bounded measurable coefficients, and the main contribution of \cite{golse2016} was in adapting De Giorgi's method to such linear equations (see also \cite{pascucci2004ultraparabolic, Wang2011ultraparabolic}.)  In \cite{cameron2017landau}, Cameron-Silvestre-Snelson derived a conditional global $L^\infty$ estimate in terms of the upper bounds for $M_f$, $E_f$, and $H_f$, and the lower bound for $M_f$, in the case $\gamma \in (-2,0]$ (see \cite{snelson2018hardpotentials} for a similar result for $\gamma \in (0,1]$).  Next, Henderson-Snelson \cite{henderson2017smoothing} established $C^\infty$ regularity in the same conditional regime, with the additional assumption that infinitely many $L^1$ moments of $f$ in $v$ are finite. This study also relied on the linear theory via Schauder estimates for kinetic equations, but made essential use of the coupling between $f$ and the coefficients in the bootstrapping procedure.

Most recently, in \cite{HST2018landau} the present authors combined local well-posedness with these conditional results to derive a continuation criterion and improved this criterion by establishing self-generating lower bounds for $f$ that suffice to remove the assumptions that $M_f$ is bounded below and $H_f$ is bounded above. 
For very soft potentials ($\gamma \in [-3,-2]$), the methods of \cite{henderson2017smoothing} and \cite{HST2018landau} all go through, but under stricter conditions on $f$ that, unlike the hydrodynamic quantities above, are not physically meaningful. In the above notation, the continuation criterion established in \cite{HST2018landau} was
\begin{equation}\label{e:tilde-Psi}
\tilde \Psi(t) :=  \begin{cases} \| f\|_{L^\infty_{t,x} L^{1,2}_v([0,t]\times\R^6)}, &\gamma \in (-2,0),\\
	\|f\|_{L^\infty_{t,x} L^{1,\ell}_v ([0,t]\times\R^6) }+ \|f\|_{L^\infty_{t,x,v}([0,t]\times\R^6)}, &\gamma\in [-3,-2],\end{cases}
\end{equation}
with $\ell > 3|\gamma|/(5+\gamma)$. For $\gamma \in [-3,-2]$, Theorem \ref{t:continuation} above improves this criterion in two ways. The improvement from $\|f\|_{L^\infty_{t,x,v}}$ to $\|f\|_{L^\infty_{t,x} L^p_v}$ with $p$ as in Theorem \ref{t:continuation} was attainable with the methods of \cite{HST2018landau}, but improving $\|f\|_{L^\infty_{t,x} L^{1,\ell}_v}$ to $\|f\|_{L^\infty_{t,x} L^1_v}$ requires the more general decay estimates of the current paper. Theorem \ref{t:continuation} is also an improvement over \cite{HST2018landau} because it applies to solutions with more general initial data.

The significance of lower bounds for $f$, besides implying that vacuum regions in the initial data are instantly filled, is in getting a lower ellipticity constant for $\bar a^f$, which is needed to apply local estimates (either De Giorgi or Schauder type). Taking $f_{\rm in}$ to be well-distributed (see Definition \ref{d:well}) guarantees this ellipticity constant is uniform in $x$, after a short time has passed. The extra lower bound assumption of Theorem \ref{t:unique} ensures this ellipticity constant does not degenerate as $t\to 0$. It would be interesting to remove these structural lower bounds for $f_{\rm in}$ from the results in the current paper, both for a more robust local well-posedness theory, and because doing so may lead to better understanding of continuation for states that are not controlled uniformly in $x$.  Doing so would, however, require a completely new approach that does not depend on the regularity estimates near $t=0$.

There is a parallel program of conditional regularity for the non-cutoff Boltzmann equation: see \cite{silvestre2016boltzmann, imbert2016weak, imbert2018schauder, imbert2018decay}. So far, there is no local well-posedness result for polynomially-decaying initial data to pair with these conditional estimates, and we plan to explore this question in a forthcoming article. See also the review \cite{mouhot2018review} for more on the conditional regularity of both equations.


\subsection{Related work} \label{s:work}

There are many existence and regularity results for the spatially homogeneous ($x$-independent) Landau equation, 
see \cite{arsenev-peskov, villani1998landau, desvillettes2000landau, alexandre2015apriori, Wu2014global, gualdani2014radial, gualdani2017landau, silvestre2015landau, GGIV2019partial} and the references therein. In this setting, large-data global solutions are known to exist in the cases $\gamma \in [-2,1]$, but for $\gamma \in [-3,-2)$, the problem remains open.

In the inhomogeneous setting considered here, a suitable notion of weak solution has been defined.  Global solutions in this class have been established by Villani \cite{villani1996global} for general initial data (see also Lions \cite{lions1994boltzmannlandau} and Alexandre-Villani \cite{alexandre2004landau}). The uniqueness and regularity of these solutions are not understood.

As mentioned above, there is a large literature on close-to-Maxwellian solutions of the Landau equation that exist globally and converge to equilibrium as $t\to \infty$: see for example \cite{guo2002periodic, mouhot2006equilibrium, strain2006almostexponential, strain2008exponential, carrapatoso2016cauchy,  CM2017verysoft, duan2019mild}. 
The majority of these papers work with initial data that is close to a Maxwellian in an exponentially-weighted (in $v$) norm, which implies $f_{\rm in}$ decays exponentially at worst, but the work of Carrapatoso-Mischler \cite{CM2017verysoft} improves this to a polynomially-weighted $H^2_x L^2_v$ norm (see also \cite{carrapatoso2016cauchy}, which works with polynomially-weighted norms in the case $\gamma > 0$.) In terms of regularity, early results took $f_{\rm in}$ in a high-order Sobolev space, and subsequent works gradually enlarged the allowable space.  

The recent interesting work of Duan-Liu-Sakamoto-Strain \cite{duan2019mild} constructs mild solutions with initial data close to a Maxwellian in an exponentially-weighted $L^1_k L^2_v$ space (here, $L^1_k$ refers to the Wiener algebra in $x$ which contains all $C^1$ functions but not all H\"older continuous functions) on $\T^3\times \R^3$.  When working on a bounded spatial domain instead of $\T^3$, they require an additional derivative in this space.  Uniqueness is also shown, as is convergence to the Maxwellian, which is their main interest.

A somewhat different setting was considered by Luk in~\cite{luk2019vacuum}.  For $\gamma \in (-2,0)$, he has shown the global existence of a solution with initial data close to the vacuum state $f\equiv 0$.



\subsection{Proof ideas}\label{s:ideas}

\subsubsection{Existence} 

First, let us point out some disadvantages of the usual method of $L^2$-based energy estimates like the one pursued in~\cite{HST2018landau}. 
Because of their physical relevance, we wish for our class of initial data to include (global) Maxwellians, which do not have finite $L^2_{x,v}(\R^6)$ norm.  Thus, it makes sense to estimate $\|\varphi f(t)\|_{L^2_{x,v}}$ for cut-off functions $\varphi$ in $x$.    To this end, after multiplying \eqref{e:landau} by $\varphi^2 f$ and integrating over $\R^6$, we find, after some formal computations,
\begin{equation}\label{e:energy}
	\frac 1 2 \frac d {dt} \|\varphi f\|_{L^2_{x.v}}^2 = \int_{\R^6}\left(- \frac 1 2 f^2 v\cdot \nabla_x(\varphi^2) - \varphi^2 \nabla_v f\cdot (\bar a^f \nabla_v f)  + \frac{1}{2} \bar c^f \varphi^2 f^2\right) \dd x \dd v.
\end{equation}

The first difficulty is that the $f^2 v\cdot\nabla_x(\varphi^2)$ term\footnote{If the spatial domain were the torus $\mathbb T^3$ instead of $\R^3$, no cut-off is necessary so this term disappears from the energy estimates, which is one reason the spatially periodic case is simpler.  It is important to note that this is not a purely technical issue; it is easy to construct solutions to $(\partial_t + v\cdot\nabla_x)g = 0$ on $\R^6$ such that $g(0,\cdot,\cdot) \in L^2$ but $g(t, \cdot,\cdot) \notin L^2$ for some $t>0$.  In general, transport can cause moment loss on the whole space (but not on the torus).} may be unbounded for large $|v|$.   Thus, this term cannot be controlled by the unweighted $L^2_{x,v}$ norm of $f$. In \cite{HST2018landau}, we avoid this issue by dividing the solution $f$ by a time-dependent Gaussian, i.e.\ studying the equation for $g = e^{(\rho-\kappa t)\vv^2}f$, which has an extra term of $\kappa \vv^2 g$ with the right sign to absorb the other terms with growth in $v$ in the energy estimates (this approach was applied earlier to the Boltzmann equation in \cite{amuxy2010regularizing, amuxy2011bounded, amuxy2013mild}).  However, this method requires the initial data $f_{\rm in}$ to have Gaussian decay in $v$, and we want to get around this requirement.

The second difficulty is that the coefficients are not adequately controlled by the $L^2$-norm of $f$.  For example, $\bar c^f$ clearly must be bounded in $L^\infty$ in order to make the last term in~\eqref{e:energy} finite for $\varphi f$ merely in $L^2$. Unfortunately, $\|\bar c^f\|_{L^\infty}$ is not controlled by $\|\varphi f\|_{L^2}$, and one instead needs a bound on $\|\varphi f \|_{L^\infty_xL_v^p}$ for $p\geq 2$ depending on $\gamma$.  This necessitates bounds on higher derivatives of $f$ in order to use an embedding theorem.

When searching for bounds on derivatives of $f$, we encounter another issue.  To obtain such bounds, one might be tempted to differentiate the equation and then follow a similar strategy as above to obtain estimates.  Unfortunately, 
differentiating the equation brings up new technical difficulties---for example, when derivatives fall on $\bar a^f$, positive-definiteness is lost, so the corresponding term does not have a good sign anymore.

To side-step these issues, we base our construction on an estimate in $L^{\infty,k}([0,T]\times\R^6)$ with $k$ as in Theorem \ref{t:exist}. This estimate (Lemma \ref{l:infty}) follows from a maximum principle  argument with barriers of the form $e^{\beta t} \vv^{-k}$. 
It is interesting to note that Lemma \ref{l:infty} does not rely on lower bounds for the matrix $\bar a^f$, or on the anisotropy of $\bar a^f$ in $v$. The $L^{\infty,k}$ estimate gives good upper bounds on the coefficients, which then allows us to adapt the mass-spreading theorem of \cite{HST2018landau} to get lower bounds of $f$ that imply coercivity of $\bar a^f$. 

To pass from these \emph{a priori} estimates---which require good smoothness and decay---to an existence theorem, we must approximate $f_{\rm in}\in L^{\infty,k}(\R^6)$ by smooth, compactly supported functions $f_{\rm in}^\eps$ and apply our previous existence theorem from \cite{HST2018landau} for $H^4$, rapidly-decaying data. 
%
%
Using the $L^{\infty,k}$ estimate of Lemma \ref{l:infty} and---crucially---our continuation criterion from \cite{HST2018landau}, we can extend these approximate solutions up to a time independent of $\eps$. This step is where the restriction $k > \max\{5, 15/(5+\gamma)\}$ comes from, because for such $k$, $\|f\|_{L^{\infty,k}}$ controls the quantities in the continuation criterion of \cite{HST2018landau} (see \eqref{e:tilde-Psi}). Finally, we can apply the local regularity estimates of \cite{golse2016, henderson2017smoothing} to obtain a solution $f$ in $C^{2,\alpha}_{\rm kin, loc}$ by compactness. 


\subsubsection{Uniqueness}

To demonstrate some of the difficulties in proving uniqueness in a weak space, let us consider two solutions $f$ and $g$ with the same initial data. Then $w := f-g$ satisfies
\[\partial_t w + v\cdot \nabla_x w = \tr(\bar a^g D_v^2 w) + \tr(\bar a^w D_v^2 f) + \bar c^g w + \bar c^w f.\]
Ignoring the growth of terms on the right for large $v$ (which we can deal with by multiplying $w$ by a polynomial weight), the most difficult term in this equation to bound in terms of $w$ is $\tr(\bar a^w D_v^2 f)$. With initial data only in $L^{\infty,k}$, we certainly cannot expect a uniform-in-time bound on $D_v^2 f$, but for a Gr\"onwall-style argument, an upper bound that is integrable in $t$ is good enough. Schauder estimates, along with a standard interpolation between $C^{2,\alpha}$ and $C^\alpha$ provide a bound like
\[
	\|D_v^2 f\|_{L^\infty([t/2,t]\times \R^6)}
		\lesssim  t^{-1+r(\alpha)} \|f\|_{\Ckin^\alpha([t/2,t]\times \R^6)},
\]
where $r(\alpha) = \alpha^2/(6-\alpha)$. 
(Again, we are ignoring velocity weights. See Lemma \ref{l:weighted-Schauder} for the precise statement.) 
At this point, to bound the $\Ckin^\alpha$ norm of $f$, one could try to apply the De Giorgi-type estimate of \cite{golse2016} in a kinetic cylinder of radius $\sim t^{-1/2}$ centered at each $(t,x,v)$, but the constant in this estimate degenerates like $t^{-\alpha/2}$,  giving a total decay like $D_v^2f(t,x,v) \lesssim t^{-1-\alpha/2+r(\alpha)}$, which is not integrable.  


We are therefore led to take initial data $f_{\rm in}$ that is H\"older continuous, and try to propagate the H\"older modulus forward in time. This is the subject of Section \ref{s:holder}. Our method of proof modifies an idea used in \cite{constantin2015SQG} for the forced critical SQG equation: 
for $(t,x,v,\chi,\nu) \in \R_+ \times \R^6 \times B_1(0)^2$ and $\ell > 0$, define
\begin{equation*}
g(t,x,v,\chi,\nu) = \frac{|f (t,x+\chi,v+\nu) - f(t,x,v)|^2}{(|\chi|^{2} + |\nu|^2)^{\alpha}} \vv^{\ell}.
\end{equation*}
The function $g$ is chosen so that the size of $g$ in $L^\infty_{x,v,\chi,\nu}$  controls the weighted $C^\alpha_{x,v}$ norm\footnote{in the Euclidean H\"older metric, i.e. without the kinetic scaling of $\rho(z,z')$. This choice is imposed on us by the proof---the reason is explained in Section \ref{s:holder}.} of $f$. 
The factor $\vv^\ell$ is there to account for polynomial decay in the H\"older modulus for large $|v|$. Calculating the equation satisfied by $g$, it can be shown that all the terms either respect a maximum principle, or can be bounded by a constant times $g$, so the $L^\infty$ norm of $g$ is bounded for some positive amount of time. Since this argument only gives H\"older continuity in $x$ and $v$ for $f$, we also have to show, via the equation, that this implies H\"older continuity in all three variables. (See Appendix \ref{s:reg}.)  This argument provides a method to establish H\"older continuity of $f$---in the case that $f_{\rm in}$ happens to be H\"older continuous---without appealing to the De Giorgi estimate of \cite{golse2016}.

\subsection{Notation}\label{sec:notation}

To aid the reader, whenever possible we denote supersolutions with overlines and subsolutions with underlines, e.g. $\overline f$ and $\underline f$.

We often use $z$ to refer to a point $(t,x,v) \in [0,T]\times \R^3 \times \R^3$, and if $z$ is decorated by a symbol the coordinates are as well, e.g.\ $\tilde z = (\tilde t, \tilde x, \tilde v)$.  

For $z$, $z'$, define the kinetic distance
\begin{equation}\label{e:rho}
	\rho(z,z') = |t'-t|^{1/2} + |x'-x - (t'-t)v|^{1/3} + |v'-v|.
\end{equation}
It is not a metric since the triangle inequality is not satisfied and it is not necessarily symmetric; however, it is straightforward to check that $\rho(z_1,z_2) \lesssim \rho(z_2,z_1)$ for any $z_1$ and $z_2$.  

The kinetic distance $\rho$ gives rise to kinetic H\"older norms, which give rise to the kinetic H\"older spaces in the obvious way.  For $Q\subset [0,T]\times \R^6$, we define the H\"older seminorm
\begin{equation}\label{e:holder_seminorm}
	[u]_{\Ckin^\alpha(Q)} =  \sup_{z\neq z' \in Q} \frac{ |u(z) - u(z')|}{\rho(z,z')^{\alpha}},
\end{equation}
and the norm $\|u\|_{\Ckin^\alpha(Q)} := [u]_{\Ckin^\alpha(Q)} + \sup_Q |u|$. In addition, we define the second order norm
\[
	\|u\|_{\Ckin^{2,\alpha}(Q)} := \sup_Q |u| + \sup_Q |\nabla u| + \|D_v^2 u\|_{\Ckin^\alpha(Q)}  + \|(\partial_t + v\cdot \nabla_x ) u\|_{\Ckin^\alpha(Q)} < \infty.
\]
The differential operator $\partial_t + v\cdot \nabla_x$ has been extended to the space $\Ckin^{2,\alpha}(Q)$ by density, and even though $(\partial_t + v\cdot \nabla_x) u$ is continuous, $\partial_t u$ and $v\cdot \nabla_x u$ need not exist in a classical sense.

%
%
%
%

We denote by $Q_r(z_0)$ a ball under $\rho$, i.e. a kinetic cylinder:
\[
	Q_r(z_0) = \{z \in (-\infty, t_0]\times \R^6 : \rho(z_0, z) < r\}.
\]
Notice that this includes only times {\em before} $t_0$.  If the center point $z_0$ is omitted in the notation, it is assumed that $z_0 = 0$. We also use $B_r(p)$ to denote a ball in the standard metric.

We use the notation $A \lesssim B$ if there is a constant $C$ such that $A \leq C B$.  In each section we clarify the dependencies of $C$, but in general, $C$ may depend on $\gamma$, $\delta$, $r$, $R$, and $k$.  We use $A \approx B$ if $A \lesssim B$ and $B\lesssim A$.

 \subsection{Outline} The rest of the paper is organized as follows. In Section \ref{s:exist}, we establish the existence of solutions (Theorem \ref{t:exist}) and prove our continuation criterion (Theorem \ref{t:continuation}). In Section \ref{s:initial}, we prove that $f$ is continuous up to $t=0$, as long as $f_{\rm in}$ is continuous. In Section \ref{s:holder}, we show propagation of H\"older regularity and a time-integrable bound for $D_v^2f$, and in Section \ref{s:unique}, we prove uniqueness of solutions (Theorem \ref{t:unique}). In Appendix \ref{s:reg}, we prove that H\"older continuity in $x$ and $v$ implies H\"older continuity in $t$, and in Appendix \ref{s:interp}, we prove some interpolation lemmas.

  \section{Existence}\label{s:exist}
 
This section references various results and proof techniques (sometimes with minor modifications) from \cite{cameron2017landau}, \cite{henderson2017smoothing}, and \cite{HST2018landau}, which are previous works on the Landau equation involving the present authors. We do this to expedite the presentation of this section, and to emphasize the more novel methods developed in the rest of the paper.  For the reader's convenience, we outline these omitted proofs when possible.

 \subsection{Coefficient bounds}

\begin{lemma}\label{l:abc} If $k > \gamma + 5$, then, with all norms over $\R^3$, we have
\begin{equation*}
\begin{split}
\bar a^f(t,x,v) &\lesssim \vv^{(\gamma+2)_+} \|f(t,x,\cdot)\|_{L^{\infty,k}_v}, \\
|\bar b^f(t,x,v)| 
&\lesssim \vv^{(\gamma+1)_+}\|f(t,x,\cdot)\|_{L^{\infty,k}_v},\\
\bar c^f(t,x,v)
&\lesssim \begin{cases}\|f(t,x,\cdot)\|_{L^\infty_v}, &\gamma = -3,\\
\|f(t,\cdot,\cdot)\|_{L^{\infty,k}_v}, &\gamma \in (-3,0),\end{cases}
\end{split}
\end{equation*}
whenever the right-hand sides are finite, with implied constants depending on $\gamma$ and $k$.
\end{lemma}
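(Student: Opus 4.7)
Each coefficient is a convolution integral involving $|w|^\gamma$, $|w|^{\gamma+1}$, or $|w|^{\gamma+2}$. The plan is to use the pointwise decay $f(t,x,v-w) \leq \|f(t,x,\cdot)\|_{L^{\infty,k}_v}\langle v-w\rangle^{-k}$, then carefully estimate the resulting weighted convolution integrals via a three-region split in the $w$-variable based on the relative sizes of $|w|$, $|v-w|$, and $\vv$.

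\textbf{Steps.} The $\gamma = -3$ case of $\bar c^f$ is immediate since $\bar c^f = cf$. For the remaining cases, the task reduces to bounding
\[
I_j(v) := \int_{\R^3} |w|^{\gamma+j}\langle v-w\rangle^{-k}\dd w, \qquad j \in \{0,1,2\},
\]
by $\vv^{(\gamma+j)_+}$ (the anisotropic factor $I - \hat w\otimes \hat w$ in $\bar a^f$ has operator norm bounded by $1$ and plays no role). For $|v| \lesssim 1$, finiteness of $I_j(v)$ is a routine check using $\gamma+j > -3$ near $w=0$ and $k > \gamma+j+3$ at infinity, both implied by the hypotheses. For $|v| \gg 1$, split $\R^3$ into:
\begin{enumerate}[label=(\alph*)]
\item $\{|w| \leq \vv/2\}$, where $\langle v-w\rangle \asymp \vv$ gives $\langle v-w\rangle^{-k} \lesssim \vv^{-k}$ and $\int_{|w|\leq \vv/2}|w|^{\gamma+j}\dd w \lesssim \vv^{\gamma+j+3}$, yielding a contribution $\lesssim \vv^{\gamma+j+3-k}$;
\item $\{|v-w| \leq \vv/2\}$, where $|w| \asymp \vv$ gives $|w|^{\gamma+j}\lesssim \vv^{\gamma+j}$, and a change of variables reduces the remaining integral to $\int_{|u|\leq \vv/2}\langle u\rangle^{-k}\dd u \lesssim \vv^{(3-k)_+}$ (bounded if $k>3$, growing like $\vv^{3-k}$ otherwise), giving a contribution $\lesssim \vv^{\gamma+j+(3-k)_+}$;
\item the complement, where $|w|, |v-w| \gtrsim \vv$, giving (via dyadic summation in $|w|$) $\lesssim \vv^{\gamma+j+3-k}$.
\end{enumerate}
Under $k > \gamma + 5 \geq \gamma+j+3$ (since $j \leq 2$), contributions (a) and (c) are bounded by $1$, and the dominant contribution is (b), which equals $\vv^{\gamma+j}$ when $k > 3$ and $\vv^{\gamma+j+3-k}$ when $k \leq 3$; in the latter case, $k > \gamma + 5$ forces the exponent to be negative, so both cases are dominated by $\vv^{(\gamma+j)_+}$. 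Specializing: $j=2$ gives $\vv^{(\gamma+2)_+}$ for $\bar a^f$; $j=1$ gives $\vv^{(\gamma+1)_+}$ for $\bar b^f$; $j=0$ gives $\vv^0$ for $\bar c^f$.

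\textbf{Main obstacle.} The delicate regime is $\gamma \in [-3,-2)$, where $k > \gamma + 5$ may be less than $3$, so $f$ need not lie in $L^1_v$ and a crude approach using $\|f\|_{L^1_v}$ fails outright. The virtue of the three-region split is that the singularity of the kernel at $w=0$ (region (a)) and the $v$-tail of $f$ near $w=v$ (region (b)) never appear in the same region and can be controlled separately. The hypothesis $k > \gamma + 5$ is precisely the constraint needed to bound region (b) in the worst case $j=2$ (for $\bar a^f$), which confirms the sharpness of the assumption under this approach.
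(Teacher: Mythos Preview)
Your proof is correct and follows the same approach as the paper: reduce to the convolution estimate $(g * |\cdot|^r)(v) \lesssim \vv^{r_+}\|g\|_{L^{\infty,k}}$ for $k > r+3$ and apply it with $r = \gamma, \gamma+1, \gamma+2$. The paper simply declares this estimate ``elementary'' and omits the details, whereas you supply them via the standard three-region split; your handling of the regime $k \leq 3$ (relevant when $\gamma$ is close to $-3$) is a nice point that the paper leaves implicit.
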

\begin{proof}
It is elementary to show that for $r >-3$ and $g:\R^3\to \R_+$, there holds for $v\in \R^3$,
\[ (g * |\cdot|^r)(v) \lesssim \vv^{r_+} \|g\|_{L^{\infty,k}},\]
where $k > r+3$ and $r_+ = \max\{r,0\}$.  The statement of the lemma follows from this convolution estimate and the formulas for $\bar a^f$, $\bar b^f$, and $\bar c^f$ in \eqref{e:coeffs}.
\end{proof}

For the proof of the continuation criterion, we also require bounds on the coefficients in terms of $L^1_v$-based norms:

 
\begin{lemma}\label{l:mass-energy}
For $f$ such that the right-hand sides are finite, we have, with all norms over $\R^3$,
\begin{align*}
|\bar a^f(t,x,v)| &\lesssim C\begin{cases} \vv^{\gamma+2}\|f(t,x,\cdot)\|_{L^{1,2}_v}, &\gamma \in (-2,0),\\
\|f(t,x,\cdot)\|_{L^{3/(5+\gamma)+\eta}_v} + \|f(t,x,\cdot)\|_{L^1_v}, &\gamma \in [-3,-2],\end{cases}\\
|\bar b^f(t,x,v)| &\lesssim\begin{cases} \vv^{\gamma+1}\|f(t,x,\cdot)\|_{L^{1,1}_v}, &\gamma \in (-1,0),\\
\|f(t,x,\cdot)\|_{L^{3/(4+\gamma)+\eta}_v} + \|f(t,x,\cdot)\|_{L^1_v}, &\gamma \in [-3,-1],\end{cases}\\
\bar c^f(t,x,v)| &\lesssim \begin{cases}\|f(t,x,\cdot)\|_{L^{3/(3+\gamma)+\eta}_v} + \|f(t,x,\cdot)\|_{L^1_v}, &\quad \gamma \in (-3,0),\\
\|f(t,x,\cdot)\|_{L^\infty_v}, &\quad \gamma = -3,\end{cases}
\end{align*}
with $\eta>0$ any small constant. 
If $f$ has even more decay, then the quadratic form associated to $\bar a^f$ satisfies the following anisotropic upper bounds for $e\in \mathbb S^2$: If $\gamma \in [-3,-2]$, then
\begin{equation}\label{e:anisotropic}
 \bar a^f(t,x,v) e_i e_j \leq C\left(\|f(t,x,\cdot)\|_{L^{1,\ell}_v} + \|f(t,x,\cdot)\|_{L^p_v}\right) \begin{cases} \vv^{\gamma+2}, & e \perp v,\\
\vv^\gamma, & e \parallel v, \end{cases}
\end{equation}
where $p > 3/(5+\gamma)$ and  $\ell> 3|\gamma|/(5+\gamma)$. If $\gamma \in (-2,0)$, then the same estimate holds without the $L^p_v$ bound, and with $2$ replacing $\ell$.
\end{lemma}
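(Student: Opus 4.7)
The plan is to split the convolution integrals dyadically in $w$, using $f$'s weighted decay to control the contribution from large $|w|$ and H\"older's inequality against an $L^p_v$ norm of $f$ to handle kernel singularities near $w=0$. For each of the three isotropic bounds, the split is at $|u| := |v-w| = |v|/2$. In the near region $|u| \leq |v|/2$ one has $|w| \approx |v|$, so the kernel factor $|w|^{\gamma+2j}$ (for $j = 1, 1/2, 0$ in the cases of $\bar a^f, \bar b^f, \bar c^f$ respectively) can be pulled out as $\vv^{(\gamma+2j)_+}$, leaving $\|f\|_{L^1_v}$-type contributions. In the far region $|u| \geq |v|/2$ we have $|u| \gtrsim \vv$, and a weighted $L^1_v$-norm of $f$ provides the required decay. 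For $\gamma \in [-3, -2]$ (and the analogous regimes for $\bar b^f, \bar c^f$), the kernel is not locally integrable near $w=0$; H\"older's inequality gives $\int_{|w|\leq 1} |w|^k f(v-w)\,\dd w \leq C\|f\|_{L^p_v(B_1(v))} \leq C\|f\|_{L^p_v}$, where $p$ is chosen so that $|w|^k \in L^{p'}(B_1)$, yielding the stated thresholds on $p$.

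The anisotropic bound \eqref{e:anisotropic} rests on the geometric identity
\[
1 - (\hat w \cdot \hat v)^2 = \frac{|u_\perp|^2}{|w|^2},
\]
where $u_\perp$ denotes the projection of $u = v-w$ onto the plane perpendicular to $v$. Since $v_\perp = 0$, one has $u_\perp = -w_\perp$, so $|u_\perp| \leq |w|$. Therefore, for $e = \hat v$, the effective kernel $|w|^{\gamma+2}(1-(\hat w\cdot\hat v)^2) = |w|^\gamma |u_\perp|^2$ satisfies both $\leq |w|^\gamma |u|^2$ and $\leq |w|^{\gamma+2}$. I would split the integral into three regions: (i) $|u| \leq |v|/2$, (ii) $|u| \geq |v|/2$ with $|w| \geq 1$, and (iii) $|u| \geq |v|/2$ with $|w| \leq 1$. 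In region (i), using $|w|^\gamma \lesssim \vv^\gamma$ and $|u_\perp|^2 \leq |u|^2$, the contribution is $\lesssim \vv^\gamma \|f\|_{L^{1,2}_v} \leq \vv^\gamma \|f\|_{L^{1,\ell}_v}$. In region (ii), $|u_\perp|^2 \leq |w|^2$ gives integrand $\leq |w|^{\gamma+2}f(u) \leq f(u)$, and the decay $\int_{|u| \geq |v|/2} f\,\dd u \leq C\vv^{-\ell}\|f\|_{L^{1,\ell}_v}$ yields the $\vv^\gamma$ prefactor as soon as $\ell \geq |\gamma|$, which follows from the hypothesis $\ell > 3|\gamma|/(5+\gamma)$.

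The main obstacle is region (iii), where a singular kernel meets an $f$ with no obvious decay on $B_1(v)$.  My plan is to combine H\"older's inequality using an intermediate exponent $q \in (3/(5+\gamma), p]$ (so that $|w|^{\gamma+2} \in L^{q'}(B_1)$) with the Lebesgue interpolation
\[
\|f\|_{L^q(B_1(v))} \leq \|f\|_{L^1(B_1(v))}^{1-\theta}\|f\|_{L^p(B_1(v))}^{\theta}, \qquad \frac{1}{q} = (1-\theta) + \frac{\theta}{p}.
\]
Bounding $\|f\|_{L^1(B_1(v))} \leq C\vv^{-\ell}\|f\|_{L^{1,\ell}_v}$ (valid for $|v|\gtrsim 1$, where $\langle u\rangle \approx \vv$ on $B_1(v)$) and applying Young's inequality delivers a bound of the form $C\vv^{-(1-\theta)\ell}(\|f\|_{L^{1,\ell}_v} + \|f\|_{L^p_v})$. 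A direct algebraic check using $1-\theta = (p-q)/(q(p-1))$ confirms that the strict hypotheses $p > 3/(5+\gamma)$ and $\ell > 3|\gamma|/(5+\gamma)$ allow a choice of $q$ close enough to $3/(5+\gamma)$ that $(1-\theta)\ell \geq |\gamma|$, producing the needed $\vv^\gamma$ improvement; the strictness of both inequalities is used essentially, since the critical values correspond to a borderline divergent interpolation exponent. The perpendicular case $e \perp v$ uses the same three-region split; the only change is that in region (i) the angular factor scales like $|v|^2$ (not $|u|^2$), which recovers the larger prefactor $\vv^{\gamma+2}$, and the requirement becomes $(1-\theta)\ell \geq |\gamma+2|$, which is less stringent. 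The regime $\gamma \in (-2,0)$ is simpler: $|w|^{\gamma+2}$ is locally integrable at $0$, so the $L^p$-ingredient disappears and $\|f\|_{L^{1,2}_v}$ alone suffices.
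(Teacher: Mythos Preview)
The paper does not give a self-contained proof: it calls the isotropic bounds ``standard integral estimates'' and defers the anisotropic estimate \eqref{e:anisotropic} to \cite[Lemma~2.1]{cameron2017landau} and \cite[Lemma~A.3]{henderson2017smoothing}. Your treatment of the isotropic bounds and of regions (i) and (ii) in the anisotropic case is correct and is essentially the standard argument.

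The gap is in region (iii) for the parallel direction when $\gamma\in[-3,-2)$. Your claim that the strict hypotheses $p>3/(5+\gamma)$ and $\ell>3|\gamma|/(5+\gamma)$ suffice to arrange $(1-\theta)\ell\ge|\gamma|$ is false. Pushing $q\downarrow 3/(5+\gamma)$ (the optimal choice for maximizing $1-\theta$) gives
\[
1-\theta \ \longrightarrow\ \frac{(5+\gamma)p-3}{3(p-1)},
\]
so the requirement $(1-\theta)\ell\ge|\gamma|$ becomes $\ell \ge 3|\gamma|(p-1)/\bigl((5+\gamma)p-3\bigr)$. This threshold equals $3|\gamma|/(5+\gamma)$ only when $\gamma=-2$; for $\gamma<-2$ it is strictly larger, and in fact diverges as $p\downarrow 3/(5+\gamma)$. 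Concretely, at $\gamma=-3$ and $p=2$ your scheme needs $\ell>9$, not the stated $\ell>9/2$. (Note also that the H\"older constant $\||w|^{\gamma+2}\|_{L^{q'}(B_1)}$ blows up as $q\downarrow 3/(5+\gamma)$, so that limit is not directly usable anyway.) Either the thresholds on $p$ and $\ell$ must be coupled as $\ell>3|\gamma|(p-1)/\bigl((5+\gamma)p-3\bigr)$, or a sharper estimate than plain H\"older-plus-interpolation on $B_1(v)$ is required in region (iii); the references the paper cites should clarify which. In the paper's own applications the anisotropic bound is only invoked with $f\in L^{\infty,k}$, where all such moments are available and the precise thresholds are immaterial.
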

\begin{proof}
The isotropic upper bounds follow from standard integral estimates and are omitted. For the proof of \eqref{e:anisotropic}, see \cite[Lemma 2.1]{cameron2017landau} for the case $\gamma \in (-2,0)$ and \cite[Lemma A.3]{henderson2017smoothing} for the case $\gamma \in [-3,-2]$. The statement of \cite[Lemma A.3]{henderson2017smoothing} uses the $L^\infty_v$ norm rather than the $L^p_v$ norm, but the same proof works with $L^p_v$.
\end{proof}

\subsection{A priori estimates}

Our first step is to get a closed estimate in the space $L^{\infty,k}(\R^6)$ for suitable $k$, using the maximum principle for the linear Landau equation.


\begin{lemma}\label{l:infty}
Let $k_0 >  \gamma + 5$. If $f$ is a smooth solution to \eqref{e:landau} on $[0,T]\times \R^6$, $\|f_{\rm in}\|_{L^{\infty,k_0}(\R^6)} + \|f\|_{L^{\infty,k_0}([0,T]\times \R^6)} < \infty$, and $f \in C_t L^{\infty,k_0}_{\R^6}([0,T]\times\R^6)$, then there exists $T_0>0$ depending on $\|f_{\rm in}\|_{L^{\infty,k_0}(\R^6)}$ and $\gamma$, and $C > 0$ depending only on $\gamma$, such that 
\[ \|f(t,\cdot,\cdot)\|_{L^{\infty,k_0}(\R^6)}  \leq C\|f_{\rm in}\|_{L^{\infty,k_0}(\R^6)}, \quad 0\leq t\leq  \min(T,T_0). \]
Furthermore, if $\|f_{\rm in}\|_{L^{\infty,k}(\R^6)} < \infty$ for any  $k \geq k_0$, and either $\|f\|_{L^{\infty,k_0}([0,T]\times \R^6)}$ or $\Psi(t) + \|f\|_{L^\infty_{t,x}L^p_v([0,T]\times \R^6)}$ is finite  (recall the definition of $\Psi$ in~\eqref{e:Psi}), with $p> 3/(3+\gamma)$, 
then the inequality
\[ \|f(t,\cdot,\cdot)\|_{L^{\infty,k}(\R^6)} \leq \|f_{\rm in}\|_{L^{\infty,k}(\R^6)} e^{CK t}, \quad 0\leq t\leq T,\]
holds for either $K = \|f\|_{L^{\infty,k_0}([0,T]\times\R^6)}$ or $K = \Psi(t) + \|f\|_{L^\infty_{t,x}L^p_v([0,T]\times \R^6)}$, where $C$ depends on $\gamma$ and $k$.
\end{lemma}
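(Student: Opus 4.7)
The plan is to run a comparison argument against the barrier $\bar f(t,x,v) := A e^{\beta t} \vv^{-k}$, working with the non-divergence form \eqref{e:landau_nd}. A direct calculation gives
\[
	D_v^2 \vv^{-k} = -k\vv^{-k-2} I + k(k+2)\vv^{-k-4}\, v\otimes v.
\]
Since $\tr(\bar a^f) \geq 0$, the first piece contributes a non-positive term to $\tr(\bar a^f D_v^2 \bar f)$ that we discard. The remaining positive-definite piece is controlled via $|\bar a^f| \lesssim \vv^{(\gamma+2)_+}\|f\|_{L^{\infty,k_0}}$ from Lemma \ref{l:abc}, together with $|v|^2 \leq \vv^2$ and $\gamma < 0$, yielding $\tr(\bar a^f D_v^2 \bar f) \lesssim \|f\|_{L^{\infty,k_0}}\bar f$. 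Combined with $\bar c^f\bar f \lesssim \|f\|_{L^{\infty,k_0}}\bar f$ (also from Lemma \ref{l:abc}, using $k_0 > \gamma+5$), this produces
\[
	(\partial_t + v\cdot \nabla_x)\bar f - \tr(\bar a^f D_v^2\bar f) - \bar c^f\bar f \geq \bigl(\beta - C\|f\|_{L^{\infty,k_0}}\bigr)\bar f,
\]
so $\bar f$ is a supersolution whenever $\beta \geq C\|f\|_{L^{\infty,k_0}}$ for a constant $C = C(\gamma, k)$.

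Choosing $A := \|f_{\rm in}\|_{L^{\infty,k}}$ ensures $f(0,\cdot,\cdot) \leq \bar f(0,\cdot,\cdot)$ pointwise, and applying the maximum principle to $w := f - \bar f$ (which is bounded on $[0,T]\times \R^6$ and decays like $\vv^{-k}$ in $v$) yields $f \leq \bar f$, hence $\|f(t)\|_{L^{\infty,k}} \leq \|f_{\rm in}\|_{L^{\infty,k}} e^{\beta t}$. This immediately proves the second statement for $k \geq k_0$: either the a priori $\|f\|_{L^{\infty,k_0}([0,T])}$ bound (via Lemma \ref{l:abc}) or the $\Psi + \|f\|_{L^\infty_{t,x}L^p_v}$ bound (via Lemma \ref{l:mass-energy}) controls the coefficients by $CK$, so $\beta = CK$ works.

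For the first statement, where $C$ depends only on $\gamma$ and $T_0$ only on $\|f_{\rm in}\|_{L^{\infty,k_0}}$, the difficulty is that $\beta$ above depends on $\|f\|_{L^{\infty,k_0}([0,T])}$ rather than $\|f_{\rm in}\|_{L^{\infty,k_0}}$ alone. I would close this gap via a standard bootstrap: set $A_0 := \|f_{\rm in}\|_{L^{\infty,k_0}}$ and
\[
	T^* := \sup\bigl\{t \leq T : \|f(s)\|_{L^{\infty,k_0}} \leq 2A_0 \text{ for all } s \in [0,t]\bigr\}.
\]
On $[0, T^*]$, the barrier estimate applies with $\beta = 2CA_0$, yielding $\|f(t)\|_{L^{\infty,k_0}} \leq A_0 e^{\beta t} \leq 2A_0$ for $t \leq T_0 := (\log 2)/\beta$. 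Continuity of $t \mapsto \|f(t)\|_{L^{\infty,k_0}}$ (from the hypothesis $f \in C_t L^{\infty,k_0}_{x,v}$) then forces $T^* \geq \min(T, T_0)$, giving the first claim with $C = 2$.

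The main obstacle, technically, is the rigorous application of the maximum principle on the unbounded domain $\R^6$: although $w$ decays as $|v| \to \infty$, it is only bounded, not decaying, as $|x| \to \infty$, so its supremum over $[0,T]\times\R^6$ need not be attained at a single point. The standard workaround is to subtract a small $x$-decaying perturbation $\epsilon(1+t)\chi(x)\vv^{-k}$ with $\chi(x) \to 0$ as $|x|\to \infty$, apply the pointwise kinetic maximum principle at the point of the perturbed supremum (where the transport term is non-negative, the elliptic term is non-positive, and the zeroth-order term $\bar c^f w$ can be absorbed by further inflating $\beta$, or alternatively by reducing to $\tilde w = e^{-\lambda t}w$ with $\lambda > \|\bar c^f\|_\infty$), and let $\epsilon \to 0$. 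This step is orthogonal to the main barrier construction and does not affect the final constants.
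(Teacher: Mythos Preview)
Your proof is correct and follows essentially the same barrier argument as the paper: the supersolution $e^{\beta t}\vv^{-k}$, the coefficient bounds from Lemmas~\ref{l:abc} and~\ref{l:mass-energy}, and the maximum principle are all used identically. The only real difference is in closing the first statement: the paper records the implicit inequality $H(t)\le \|f_{\rm in}\|_{L^{\infty,k_0}}\exp(C_0 t H(t))$ and invokes a separate ODE-type lemma (Lemma~\ref{l:annoying}) to extract $H(t)\le e\|f_{\rm in}\|_{L^{\infty,k_0}}$, whereas you run an equivalent bootstrap/continuity argument with the threshold $2\|f_{\rm in}\|_{L^{\infty,k_0}}$---both are standard and yield the same conclusion with the same dependence of $T_0$ on the data.
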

Although this estimate depends on $\|f\|_{L^\infty_{t,x}L^p_v([0,T]\times \R^6)}$ for all $\gamma \in [-3,0)$, this dependence is removed in the case $\gamma \in (-2,0)$ during the proof of the continuation criterion (Theorem \ref{t:continuation}).
\begin{proof}
Define the linear operator $L$ by
\[
	Lg = \partial_tg +v\cdot \nabla_xg - \tr(\bar a^f D_v^2 g) - \bar c^f g.
\]
With $\beta >0$ and $k >0$ to be chosen, define $\overline \phi(t,x,v) = e^{\beta t}\vv^{-k}$. Then
\[|\partial_{ij} \overline\phi| = e^{\beta t} \left|k(k+2)\vv^{-k-4} v_iv_j - k \vv^{-k-2}\delta_{ij}\right| \lesssim \vv^{-2} \overline\phi,\]
and 
\begin{equation*}
\begin{split}
L\overline\phi &= \beta \overline\phi - \tr(\bar a^f D_v^2 \overline\phi) - \bar c^f \overline\phi\\
 &\geq \beta \overline\phi -  CK\vv^{(\gamma+2)_+} \vv^{-2}\overline\phi - CK \overline\phi\\
 &\geq (\beta - C_0K) \overline\phi,
\end{split}
\end{equation*}
where $K$ is any quantity such that $|\bar a^f|\leq K\vv^{(\gamma+2)+}$ and $|\bar c^f|\leq K$, and $C_0$ depends on $\gamma$ and $k$. With $\beta = C_0K$, we have $L\overline\phi \geq 0$. If $k$ is such that $\|f_{\rm in}\|_{L^{\infty,k}(\R^6)} < \infty$, we can apply the maximum principle to $\|f_{\rm in}\|_{L^{\infty,k}}\overline\phi - f$ and conclude 
\begin{equation}\label{e:Linfty}
 \|f(t,\cdot,\cdot)\|_{L^{\infty,k}(\R^6)} \leq \|f_{\rm in}\|_{L^{\infty,k}(\R^6)} \exp(C_0 K t), \quad t\in [0,T].
 \end{equation}
 Now we set $k = k_0$, and for $t\in [0,T]$, define $H(t) = \|f\|_{L^{\infty,k_0}([0,t]\times \R^6)}$. Since $H$ is increasing, \eqref{e:Linfty} applied on $[0,t]$ with $K = H(t)$ implies $H(t) \leq \|f_{\rm in}\|_{L^{\infty,k_0}}\exp(C_0tH(t))$ for $t\in [0,T]$. From this inequality and Lemma \ref{l:annoying} below, we have $H(t) \leq C\|f_{\rm in}\|_{L^{\infty,k_0}}$ if $C_0 \|f_{\rm in}\|_{L^{\infty,k_0}} t \leq 1/e$, and we can choose $T_0 = (e C_0 \|f_{\rm in}\|_{L^{\infty,k_0}})^{-1}$. 
 
For the second conclusion of the lemma, we apply \eqref{e:Linfty} for any $k \geq k_0$. We may choose $K = \|f\|_{L^{\infty,k_0}([0,T]\times \R^6)}$ by Lemma \ref{l:abc}, or $K = \Psi(t) + \|f\|_{L^\infty_{t,x}L^p_v([0,T]\times \R^6)}$ by Lemma \ref{l:mass-energy}.
\end{proof}

\begin{lemma}\label{l:annoying}
	If $H:[0,T]\to \R_+$ is a continuous increasing function and $H(t) \leq A e^{BtH(t)}$ for all $t\in [0,T]$ and some positive constants $A$ and $B$, then 
	\[H(t) \leq eA \quad \mbox{ for } \quad  0\leq t \leq T_* := \min\left(T, \frac 1 {eAB}\right).\]
\end{lemma}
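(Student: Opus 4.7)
The plan is to run a short continuity/bootstrap argument, of the kind familiar from ODE analysis. Since $H$ is continuous with $H(0) \leq A \cdot e^0 = A < eA$, the first step will be to let $t_1$ be the first time in $[0, T_*]$ (if any) at which $H(t_1) = eA$. If no such time exists, continuity gives $H < eA$ throughout $[0,T_*]$ and we are done.

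Otherwise, plugging $t = t_1$ into the hypothesis yields
\[
    eA = H(t_1) \leq A e^{B t_1 H(t_1)} = A e^{eABt_1},
\]
which forces $eABt_1 \geq 1$, i.e.\ $t_1 \geq 1/(eAB)$. Combined with the assumed $t_1 \leq T_* \leq 1/(eAB)$, this pins $t_1 = T_* = 1/(eAB)$, and then by continuity $H(t) \leq eA$ on all of $[0,T_*]$.

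The only step with real content is recognizing in advance that $eA$ is the correct threshold. A clean way to motivate this is to rewrite the hypothesis as $\psi(H(t)) \leq A$ with $\psi(h) := h e^{-Bth}$; since for fixed $t>0$ the function $\psi$ is maximized in $h$ at $h = 1/(Bt)$ with maximum value $1/(eBt)$, the inequality $\psi(h) \leq A$ fails to admit any $h \geq 0$ in an interval around this maximizer once $t$ exceeds $1/(eAB)$, and at the threshold time the set $\{\psi = A\}$ degenerates to the single point $h = eA$. This simultaneously identifies both $T_*$ and the bound $eA$.

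The monotonicity hypothesis on $H$ is not actually needed for the argument—continuity alone suffices—so there is no substantive obstacle beyond pinning down the threshold constant, which the calculation above handles immediately.
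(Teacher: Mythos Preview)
Your proof is correct and arguably cleaner than the paper's. The paper instead introduces $\phi_t(x) = A e^{Btx} - x$, computes its minimizer $x_{\min}(t) = -(Bt)^{-1}\log(ABt)$, checks that $\phi_t(x_{\min}(t)) < 0$ for $t < 1/(eAB)$, and then argues by contradiction: if $H(1/(eAB)) > x_{\min}(1/(eAB)) = eA$, the intermediate value theorem applied to $t \mapsto H(t) - x_{\min}(t)$ produces a time $t_0$ with $H(t_0) = x_{\min}(t_0)$, whence $\phi_{t_0}(H(t_0)) < 0$, contradicting the hypothesis $\phi_t(H(t)) \geq 0$. Monotonicity of $H$ is then used to pass from $H(T_*) \leq eA$ to $H \leq eA$ on all of $[0,T_*]$. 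Your direct bootstrap---first crossing time of the level $eA$---bypasses the auxiliary function $x_{\min}$ entirely and, as you correctly observe, does not need monotonicity. The paper's route has the minor expository advantage of making the choice of threshold $eA$ emerge from a calculation rather than appear as an ansatz, but your closing paragraph supplies exactly that motivation anyway via the dual function $\psi(h) = h e^{-Bth}$.
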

\begin{proof}
	First, we may assume that $T_* = 1/(eAB)$ by simply extending $H(t)$ to be constant after $t = T$.  For each $t\in (0,  1/(eAB))$, let $\phi_t(x) = A e^{Bt x} - x$ and let $x_{\rm min}(t)  = -(Bt)^{-1} \log(ABt)$.  
	A simple computation implies that $\phi_t(x_{\rm min}(t)) < 0$. 
	We claim that $H(1/(eAB)) \leq x_{\rm min}(1/(eAB))$, which establishes the claim because $H$ is increasing.  We argue by contradiction supposing $H(1/(eAB)) > x_{\rm min}(1/(eAB))$.  Since $H(0) < x_{\rm min}(0)$, the intermediate value theorem implies that $H(t_0) = x_{\rm min}(t_0)$ for some $t_0 < 1/(eAB)$, which implies that $\phi(H(t_0)) = \phi(x_{\rm min}(t_0)) < 0$.  This is a contradiction since $\phi_t(H(t)) \geq 0$ for all $t\in [0,T_*]$ by hypothesis.  This finishes the proof.
%
\end{proof}

We require the following result guaranteeing a lower ellipticity bound for $\bar a^f$ that is controlled from below whenever $\bar a^f$ is bounded above.  \Cref{l:lower-bounds} follows from the work in \cite{HST2018landau}, although it is not explicitly stated as a lemma in that paper. We require the following definition: for $k\geq 0$, the uniformly local Sobolev norm $H^4_{\rm ul}$ is defined by
\[\|g\|_{H^{k}_{\ul}(\R^6)}^2 = \sum_{|\alpha|+|\beta|\leq k} \sup_{a\in \R^3}  \int_{\R^6} |\phi(x-a)  \partial_x^\alpha \partial_v^\beta g(x,v)|^2 \dd x \dd v,\]
where $\phi\in C^\infty_0(\R^3)$ is a cutoff satisfying $0\leq \phi \leq 1$, $\phi \equiv 1$ in $B_1$, and $\phi\equiv 0$ in $\R^3\setminus B_2$. A bound on $f$ in the $H^4_{\rm ul}$ norm is needed to apply the results of \cite{HST2018landau} directly, but none of the conclusions depend quantitatively on this norm.

\begin{lemma}\label{l:lower-bounds}
	Let $f\geq 0$ be a solution of \eqref{e:landau} in $[0,T]\times \R^6$ with initial data $f_{\rm in}$ well-distributed with parameters $\delta, r, R >0$, and such that $e^{\rho|v|^2}f_{\rm in}\in H^4_{\rm ul}(\R^6)$ for some $\rho>0$. 
If $K$ is a constant such that $|\bar a^f(t,x,v)| \leq K \vv^{(\gamma+2)_+}$ in $[0,T]\times \R^6$, then $f$ satisfies the pointwise lower bound
	\[f(t,x,v) \geq c_1(t) e^{-|v|^{2-\gamma}/c_1(t)}, \quad (t,x,v) \in E,\]
	 where $c_1(t)>0$ depends only on $\gamma$, $\delta$, $r$, $R$, and $K$. Furthermore, the matrix $\bar a^f$ satisfies 
	\begin{equation}\label{e:lambda}
	\bar a_{ij}^f(t,x,v) e_i e_j \geq c_2(t) \begin{cases} \vv^{\gamma},  &e\in \mathbb S^2,\\
	\vv^{\gamma+2}, &e\cdot v = 0,\end{cases} 
	\end{equation}
	for $0\leq t\leq T$, with $c_2(t)$ depending on $\gamma$, $\delta$, $r$, $R$, and $K$. The functions $c_1(t)$ and $c_2(t)$ may degenerate as $t\to 0$ but are otherwise uniformly positive.
	
	
	If, in addition, $f_{\rm in}$ is such that for all $x$, there is some $|v_m|<R$ such that $f_{\rm in}(\cdot,\cdot) \geq \delta \1_{B_r(x,v_m)}$, then \eqref{e:lambda} holds with $c_2(t)$ replaced by a uniform positive constant $c_T>0$ depending on $\gamma$, $\delta$, $r$, $R$, $K$, and $T$, that are positive for any $T< \infty$. 
\end{lemma}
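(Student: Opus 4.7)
The plan is to invoke the self-generating lower bounds machinery developed in \cite{HST2018landau}, extract from it the pointwise Gaussian bound on $f$, and then convert this to an ellipticity bound on $\bar a^f$ via a direct computation on the integral expression.

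First, I would verify the hypotheses of the mass-spreading / lower-bound theorems from \cite{HST2018landau}. The key ingredient there is that, as long as $f$ is a sufficiently regular nonnegative solution, $\bar a^f$ is bounded above by $K\vv^{(\gamma+2)_+}$, and the initial data contains some localized bump $\delta \1_{B_r(x_m,v_m)}$, the solution spreads mass in phase space via a barrier argument driven by the transport and the diffusive part of $Q_L(f,f)$. The output of that argument is exactly a bound of the form $f(t,x,v)\ge c_1(t)\exp(-|v|^{2-\gamma}/c_1(t))$, with $c_1(t)>0$ for all $t\in(0,T]$ and depending only on $\gamma,\delta,r,R,K$. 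Here the well-distributed hypothesis (\Cref{d:well}) is exactly what is needed to make the spatial location $x_m\in B_R(x)$ of the bump depend measurably on $x$ so that the resulting bound is uniform in $x$; the $H^4_{\ul}$ and Gaussian-decay hypotheses on $f_{\rm in}$ are there only to place $f$ in the regularity class where \cite{HST2018landau} applies, and do not enter the final bound quantitatively.

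Second, I would deduce \eqref{e:lambda} from this pointwise lower bound by a standard integral computation. Writing
\[
\bar a^f_{ij}(t,x,v)e_ie_j = a_\gamma\int_{\R^3}\left(|w|^2-(w\cdot e)^2\right)|w|^\gamma f(t,x,v-w)\,\dd w,
\]
I would restrict the integration to a judiciously chosen region depending on $e$ and $v$. For $e\perp v$, I integrate over a ball of radius comparable to $\vv$ centered at $w=v$ (i.e. $v-w$ small), where the integrand sees $|w|^{\gamma+2}\sim \vv^{\gamma+2}$ and $f(t,x,v-w)$ is bounded below by $c_1(t) e^{-1/c_1(t)}$. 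For general $e\in\mathbb S^2$, I restrict to the annular region where $w$ is essentially parallel to $e$ but $v-w$ stays bounded, giving the weaker $\vv^\gamma$ bound. Both calculations are entirely analogous to the lower-bound arguments in \cite[Appendix~A]{henderson2017smoothing} and the references therein; the Gaussian tail in $c_1(t)e^{-|v|^{2-\gamma}/c_1(t)}$ is precisely what is required so that the integration picks up enough mass even for $v-w$ in an annulus of size $\vv$.

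Finally, for the improved statement in which each $x$ has its own $v_x$ with $|v_x|<R$ and $f_{\rm in}\ge \delta \1_{B_r(x,v_x)}$, the initial data itself already contains a positive bump centered at each spatial point, so the spreading step is not needed to cover a neighborhood of $x$ uniformly. I would then feed this stronger initial positivity into the same spreading argument of \cite{HST2018landau} to propagate $c_2$ to a constant that is bounded below uniformly on $[0,T]$ rather than degenerating at $t=0$. The main obstacle, and the reason for citing rather than reproducing \cite{HST2018landau}, is the barrier construction underlying the mass spreading itself: tracking the way the support of a Gaussian-like subsolution propagates under the Landau operator with merely upper bounds on $\bar a^f$ is delicate, but the argument is essentially identical to the one given there and does not need to be redone here.
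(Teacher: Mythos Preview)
Your proposal is correct and follows essentially the same route as the paper: invoke the mass-spreading results of \cite{HST2018landau} (noting that only the upper bound $|\bar a^f|\le K\vv^{(\gamma+2)_+}$ is actually used there) to obtain the pointwise lower bound on $f$, convert this to the anisotropic ellipticity bound via the integral computation (the paper cites \cite[Lemma~4.3]{HST2018landau} rather than \cite[Appendix~A]{henderson2017smoothing}, but these are the same calculation), and for the uniform-in-$t$ statement use the short-time persistence of the initial bump from Step~1 of \cite[Proposition~4.1]{HST2018landau} to cover $[0,T_*]$ before appealing to the general $c_2(t)$ on $[T_*,T]$.
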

\begin{proof} 

The first statement follows directly from \cite[Theorem 1.3]{HST2018landau}. As stated, that theorem requires a bound on the hydrodynamic quantities (precisely $\tilde \Psi(t)$ defined in \eqref{e:tilde-Psi}), but the only role these quantities play in the proof is providing an upper bound of the form $|\bar a^f(t,x,v)| \leq K\vv^{(\gamma+2)_+}$, so any such $K$ suffices (see \cite[Section 4]{HST2018landau}).

The lower ellipticity bound \eqref{e:lambda} for $\bar a^f$ follows from the lower bound for $f$ and \cite[Lemma 4.3]{HST2018landau}.

The last statement follows from an examination of  the proof of \cite[Proposition 4.1]{HST2018landau}. For any $x\in \R^3$, let $v_m$ be such that $f_{\rm in}(x,\cdot)\geq \delta \1_{B_r(v_m)}$, where $|v_m|<R$. Step 1 of the proof of \cite[Proposition 4.1]{HST2018landau} establishes the existence of some $T_*\in (0,T]$ depending on $\gamma$, $K$, and $r$, such that for all $t\in [0,T_*]$, 
\[f(t,\cdot,\cdot) \geq \frac \delta 2 \1_{B_{r/2}(x)\times B_{r/2}(v_m)}.\]
The proper $v$-dependence for $t\in [0,T_*]$ is then implied by the proof of \cite[Theorem 1.3(ii)]{HST2018landau}. We can take $c_1$ to be the minimum of $\delta/2$ and $\inf_{t\in [T_*,T]} c_1(t)$, and similarly for $c(t)$ in \eqref{e:lambda}.
%
%
\end{proof}

\begin{remark}
It is seen from the proof of Theorem \ref{t:exist}.(i) that the quantitative lower bounds of Lemma \ref{l:lower-bounds} also apply to the solution constructed in Theorem \ref{t:exist}.
\end{remark}

\subsection{A convenient transformation and new coefficients}

Since the ellipticity ratio of $\bar a^f$ degenerates as $|v|\to\infty$, it is convenient to use a change of variables developed in \cite[Lemma 4.1]{cameron2017landau} that makes $\bar a^f$ uniformly elliptic (see also \cite[Lemma 3.1]{henderson2017smoothing} for the extension to the case $\gamma < -2$).  This makes it possible to apply the local regularity estimates from \cite{golse2016,henderson2017smoothing} and understand precisely how the constants degenerate for large $|v|$.  We define this transformation here.

Fix $z_0 \in \R_+\times \R^6$, and let $S$ be the linear transformation such that 
\begin{equation*}
S e =
	\begin{cases}
		\vvO^{1+\gamma/2} e , & e \cdot v_0 = 0\\
		\vvO^{\gamma/2}e, & e \cdot v_0 = |v_0|,
	\end{cases}
\end{equation*}
and
\begin{equation}\label{e:r_1}
	r_1 = \vvO^{-(1+\gamma/2)_+}
		\min\left(1,\sqrt{t_0/2}\right).
\end{equation}
Next, define
\begin{equation}\label{e:COV}
	\mathcal S_{z_0}(t,x,v) = (t_0+t,x_0+S x + t v_0 ,v_0 + S v)
		\quad \text{ and } \quad
	\delta_{r_1}z = (r_1^2 t, r_1^3 x, r_1 v).
\end{equation}
There are two important, elementary features of these transformations that we require throughout the proof:
\begin{equation}\label{e:rho_transformation}
	\begin{split}
	&\rho(\delta_{r_1} z,\delta_{r_1} z') = r_1 \rho(z,z')
		\quad \text{ and } \quad\\
	&\min\left(1, \sqrt{t_0/2}\right) \vvO^{-(1+\gamma/2)_+ + \gamma/2} \rho(z,z')
		\leq \rho(\mathcal S_{z_0}(\delta_{r_1}z), \mathcal S_{z_0}(\delta_{r_1}z'))\\
		&\qquad \leq \min\left(1, \sqrt{t_0/2}\right) \rho(z,z').
	\end{split}
\end{equation}
Given any function $g$ and any point $z=(t,x,v) \in Q_1$, we define
\begin{equation}\label{e:transformation_L_infinity}
	g_{z_0}(z) = g(\mathcal S_{z_0}(\delta_{r_1}z)).
\end{equation}

First we prove an $L^\infty$-based bound on the coefficients. This is necessary to obtain the H\"older regularity of $f$.
\begin{lemma}\label{l:transformation_L_infinity}
	Fix $z_0 = (t_0,x_0,v_0)\in \R_+\times \R^3\times\R^3$ and $\ell \in \R$.  Let $\bar a$ be a matrix and $g$ be a scalar- or vector-valued function, and assume that, for all $z\in [t_0/2,t_0]\times \R^3\times \R^3$ and $e \in \mathbb S^2$,
	\begin{equation}\label{e:g-a}
		|g(z)| \lesssim \vv^\ell
			\quad\text{and}\quad
		\bar a_{ij}(z) e_i e_j
			\approx
				\begin{cases}
					\vv^\gamma \quad &\text{ if } |e \cdot v| = |v|,\\
					\vv^{2+\gamma} \quad &\text{ if } e\cdot v = 0.
				\end{cases}
	\end{equation}
	Let $\overline A = S^{-1} \bar a_{z_0} S^{-1}$.  Then there exist $0 < \lambda < \Lambda$, depending only on the implied constants in \eqref{e:g-a}, such that, in $Q_1$,
	\[
		\lambda I
			\leq \overline A(z)
			\leq \Lambda I
		\quad \text{and}\quad
		|g_{z_0}(z)|
			\leq \Lambda \vvO^{-\ell} \|g\|_{L^{\infty,\ell}([t_0/2,t_0]\times \R^6)}.
	\]
	In particular, $\lambda$ and $\Lambda$ are independent of $z_0$.
\end{lemma}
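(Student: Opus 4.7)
\emph{Image control and function bound.} My plan exploits two features of the rescaling: for $z \in Q_1$, the image $z' := \mathcal S_{z_0}(\delta_{r_1}z)$ has velocity $v'$ in a unit ball around $v_0$ (so $\bar a(z')$ and $g(z')$ behave like their values at $v_0$), and $S$ was designed so that $S^{-1}$ converts the anisotropic eigenvalues of $\bar a$ at $v_0$ into order-one quantities. Concretely, $z'$ has time coordinate $t_0 + r_1^2 t \in [t_0/2, t_0]$ (since $r_1^2 \leq \min(1, t_0/2)$) and velocity $v' = v_0 + Sr_1 v$ with $|v' - v_0| \leq \|Sr_1\|_{\mathrm{op}}$. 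A direct computation yields $\|Sr_1\|_{\mathrm{op}} = \vvO^{(1+\gamma/2)-(1+\gamma/2)_+}\min(1, \sqrt{t_0/2}) \leq 1$ in both regimes $\gamma \geq -2$ (where the exponent is $0$) and $\gamma < -2$ (where it is negative). Consequently $\langle v'\rangle \approx \vvO$ with universal constants, and the function bound is immediate: $|g_{z_0}(z)| = |g(z')| \leq \langle v'\rangle^{-\ell}\|g\|_{L^{\infty,\ell}} \lesssim \vvO^{-\ell}\|g\|_{L^{\infty,\ell}}$.

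\emph{Ellipticity of $\overline A$.} Treating $\bar a(z')$ as the symmetric matrix with eigenvalues $\alpha \approx \langle v'\rangle^\gamma$ along $\hat v'$ and $\beta \approx \langle v'\rangle^{2+\gamma}$ perpendicular to it (the structure imposed by the hypothesis), we have $\bar a(z') w \cdot w = \alpha(w \cdot \hat v')^2 + \beta(|w|^2 - (w \cdot \hat v')^2)$ for any $w$. For a unit $e$, set $u := S^{-1}e$ and decompose $e = e_\parallel + e_\perp$ relative to $v_0/|v_0|$, so that $u = \vvO^{-\gamma/2}e_\parallel + \vvO^{-1-\gamma/2}e_\perp$ by the definition of $S$. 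Since the angle $\theta$ between $\hat v'$ and $\hat v_0$ satisfies $\sin\theta \lesssim \vvO^{-1}$ for $\gamma \geq -2$ and $\sin\theta \lesssim \vvO^{\gamma/2}$ for $\gamma < -2$, expanding $\overline A(z) e \cdot e = \bar a(z') u \cdot u$ in the $v_0$-adapted basis shows each entry of $\overline A$ is bounded by an absolute constant, yielding the upper bound $\overline A(z) \leq \Lambda I$.

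\emph{Lower bound and main obstacle.} For strict ellipticity from below, the cleanest route is via the determinant: $\det \overline A = \det \bar a(z') / (\det S)^2 \approx \alpha \beta^2 / \vvO^{4+3\gamma} \approx \langle v'\rangle^{4+3\gamma}/\vvO^{4+3\gamma} \approx 1$, independently of $z_0$. Combined with $\operatorname{tr}(\overline A) \lesssim 1$ from the previous paragraph, this forces all three eigenvalues of $\overline A$ into a fixed interval $[\lambda, \Lambda]$ with $\lambda > 0$ depending only on the implied constants in~\eqref{e:g-a}. The main obstacle is that the off-diagonal entries of $\overline A$ in the $v_0$-adapted basis can be of order $1$ in absolute value (carrying a factor $\vvO \sin\theta \cos\theta$), so entry-wise estimates alone do not give strict positivity; the determinant argument sidesteps this by exploiting the conjugation structure of $\overline A = S^{-1}\bar a S^{-1}$. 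Both regimes $\gamma \in (-2, 0)$ and $\gamma \in [-3, -2]$ are handled uniformly thanks to the careful design of $r_1$ in~\eqref{e:r_1}.
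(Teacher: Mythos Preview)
Your treatment of the function bound is correct and matches the paper exactly: the paper's one-line proof is precisely that $\langle v_0 + r_1 Sv\rangle \approx \vvO$ for $z\in Q_1$, which is your image-control observation. Your upper bound on $\overline A$ is also fine.

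The determinant argument for the lower bound, however, has a gap. You assert $\det \bar a(z') \approx \alpha\beta^2 \approx \langle v'\rangle^{4+3\gamma}$, treating $\alpha,\beta$ as \emph{eigenvalues} of $\bar a(z')$. But the hypothesis~\eqref{e:g-a} only prescribes the quadratic-form values $\bar a\, e\cdot e$ along $\hat v'$ and perpendicular to it---i.e., the diagonal entries of $\bar a$ in the $v'$-adapted orthonormal basis---not the eigenvalues. Positive definiteness bounds the off-diagonals by the geometric mean of the diagonals, but no better: the $2\times 2$ model
\[
M=\begin{pmatrix} a & (1-\epsilon)\sqrt{ab}\\ (1-\epsilon)\sqrt{ab} & b\end{pmatrix}
\]
satisfies the analogue of~\eqref{e:g-a} with implied constants equal to $1$, yet $\det M = ab(2\epsilon-\epsilon^2)$ and, after conjugating by $\mathrm{diag}(a^{1/2},b^{1/2})^{-1}$, the smallest eigenvalue is $\epsilon$. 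So neither $\det \bar a(z')$ nor the smallest eigenvalue of $\overline A$ is controlled from below by~\eqref{e:g-a} as literally stated.

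This is arguably as much a looseness in the lemma's hypotheses as a flaw in your reasoning. The paper does not give a self-contained argument but defers to \cite[Lemma~3.1]{henderson2017smoothing} (itself building on \cite[Lemma~4.1]{cameron2017landau}), where the specific convolution structure of $\bar a^f$ is available and the off-diagonal entries in the $v$-basis are genuinely lower order, not merely bounded by the geometric mean. Your determinant route would close cleanly under the slightly stronger (and, for the Landau coefficient, true) hypothesis that $\hat v$ is an approximate eigenvector of $\bar a$; alternatively, one can carry out the direct quadratic-form estimate $\overline A\,e\cdot e = \bar a(z')\,S^{-1}e\cdot S^{-1}e$ by decomposing $S^{-1}e$ relative to $\hat v'$ and carefully tracking the small angle between $\hat v'$ and $\hat v_0$, using both the isotropic lower bound $\bar a\,e\cdot e\gtrsim \langle v\rangle^\gamma$ for all $e$ (from~\eqref{e:lambda}) and the perpendicular lower bound, which is the route taken in the cited references.
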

\begin{proof}
	The proof of the bounds of $\overline A$ is exactly as in \cite[Lemma 3.1]{henderson2017smoothing}.  The bound on $g_{z_0}$ follows directly from the fact that if $z\in Q_1$, then $\langle r_1Sv + v_0\rangle \approx \vvO$.
\end{proof}

In order to obtain bounds on the $\Ckin^{2,\alpha}$ norm of $f$, we require H\"older regularity of the coefficients of~\eqref{e:landau} after applying our transformation.
\begin{lemma}\label{l:Holder_bar_A}
	Fix $z_0 \in \R_+\times\R^6$, $\alpha\in(0,1)$, and $m > \max\{5,5 + \gamma + \alpha/3\}$.  Suppose that $\vv^m f \in \Ckin^\alpha([t_0/2,t_0]\times \R^6)$.  Let $\overline A(z) = S^{-1} \bar a^f_{z_0} S^{-1}$ and $\overline C = r_1^2 \bar c^f_{z_0}$.  Then we have
	\[\begin{split}
		 [\overline A]_{\Ckin^{2\alpha/3}(Q_1)}
			&\lesssim \vvO^{2 + \alpha/3} [\vv^m f]_{\Ckin^\alpha([t_0/2,t_0]\times \R^6)},\\
			[\overline C]_{\Ckin^{2\alpha/3}(Q_1)}
			&\lesssim \vvO^{-(2+\gamma)_+ + \gamma + \alpha/3} [\vv^m f]_{\Ckin^\alpha([t_0/2,t_0]\times \R^6)},\\
		[f_{z_0}]_{\Ckin^{2\alpha/3}(Q_1)}
			&\lesssim \min\{1,t_0^{\alpha/3}\}\vvO^{2 + \gamma + \alpha/3} [\vv^m f]_{\Ckin^\alpha(Q_1(z_0) \cap ([t_0/2,t_0]\times\R^6))}.
	\end{split}\]
\end{lemma}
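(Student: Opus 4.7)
The three bounds are of the same flavor---each records how a weighted kinetic Hölder seminorm of $f$ transfers to the flattened quantity obtained after the change of variables $\mathcal S_{z_0}\circ\delta_{r_1}$---so the plan is to treat them as separate applications of the same three tools: (i) the scaling identity \eqref{e:rho_transformation}, (ii) the fact that for $z\in Q_1$ the velocity $v_0+r_1Sv$ lies in a unit ball around $v_0$, so $\langle v_0+r_1Sv\rangle \approx \vvO$, and (iii) the convolution representation \eqref{e:coeffs} for $\bar a^f$ and $\bar c^f$.

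The estimate for $f_{z_0}$ I would tackle first because it is the most direct. For $z, z'\in Q_1$, write $f = \vv^{-m}\cdot \vv^m f$ and split the difference $f_{z_0}(z)-f_{z_0}(z')$ into a term where $\vv^{-m}$ is treated as a common factor (evaluated to order $\vvO^{-m}$) and a lower-order term measuring the Hölder modulus of $\vv^{-m}$ itself. The upper bound in \eqref{e:rho_transformation} then turns $[\vv^m f]_{\Ckin^\alpha}\,\rho(z^\ast,z^{\ast\ast})^\alpha$ into $\min(1,\sqrt{t_0/2})^\alpha\,\rho(z,z')^\alpha$, and passing from $\Ckin^\alpha$ to the weaker $\Ckin^{2\alpha/3}$ on $Q_1$ (where $\rho\lesssim 1$) produces the factor $\min\{1,t_0^{\alpha/3}\}$. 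The stated weight $\vvO^{2+\gamma+\alpha/3}$ is then a (non-sharp) upper bound on $\vvO^{-m}$, valid under the hypothesis $m>5+\gamma+\alpha/3$.

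For $\overline A$ and $\overline C$ the plan is to first establish a weighted kinetic Hölder bound on $\bar a^f$ and $\bar c^f$ themselves, and only then conjugate by $S^{-1}$ (respectively multiply by $r_1^2$) and compose with $\mathcal S_{z_0}\circ\delta_{r_1}$. Using
\[\bar a^f(z_1)-\bar a^f(z_2) \;=\; a_\gamma\!\int\! K(w)\bigl[f(t_1,x_1,v_1-w)-f(t_2,x_2,v_2-w)\bigr]\dd w,\]
with $K(w)=(I-\hat w\otimes \hat w)|w|^{\gamma+2}$ (and analogously for $\bar c^f$), I would split the $w$-integral into $|w|\le R$ and $|w|>R$ and optimize $R$. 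On the near region the Hölder regularity of $\vv^m f$ applies, after observing that the kinetic distance between $(t_i,x_i,v_i-w)$ differs from $\rho(z_1,z_2)$ only by a correction of order $|t_1-t_2|^{1/3}|w|^{1/3}$ coming from the drift term in the $x$-slot of $\rho$; on the far region one uses the decay $f\lesssim \vv^{-m}$. Feeding the resulting bound back through $S^{-1}$ (which contributes $\|S^{-1}\|^2\lesssim \vvO^{-\gamma}$ for $\gamma\ge-2$, and $\vvO^{-2-\gamma}$ otherwise) and, for $\overline C$, through the scaling $r_1^2 \lesssim \vvO^{-2(1+\gamma/2)_+}$, one lands on the asserted powers of $\vvO$ and $t_0$.

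The main obstacle is the Hölder analysis of the convolutions $\bar a^f$ and $\bar c^f$. The anisotropy of $\rho$ means that a translation $v\mapsto v-w$ inside the integrand perturbs the $x$-slot of $\rho$ by a $w$-dependent term, so the near/far split must be carefully chosen to balance this excess against the polynomial-decay tail of $\vv^m f$. It is precisely this balancing that produces the reduced Hölder exponent $2\alpha/3$ in place of the naive $\alpha$, and that forces the weight hypothesis $m>5+\gamma+\alpha/3$ needed to make the far piece converge.
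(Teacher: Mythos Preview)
Your plan correctly isolates the key difficulty---the extra drift contribution $|t_1-t_2|^{1/3}|w|^{1/3}$ that appears in the kinetic distance when one shifts $v\mapsto v-w$ inside the convolution---and the bookkeeping with $\|S^{-1}\|^2\lesssim\vvO^{-\gamma}$ and the scale $r_1$ is right. But the paper's route is more direct than the near/far split you propose. It bounds, over \emph{all} $w\in\R^3$,
\[
\rho\bigl((\tilde t,\tilde x,\tilde v-w),(\tilde t',\tilde x',\tilde v'-w)\bigr)\le \rho(\tilde z,\tilde z')+\rho(\tilde z,\tilde z')^{2/3}|w|^{1/3},
\]
raises this to the power $\alpha$ (so that $(\rho+\rho^{2/3}|w|^{1/3})^\alpha\lesssim \rho^\alpha+\rho^{2\alpha/3}|w|^{\alpha/3}$), and then uses the localized product-rule estimate $[f]_{\Ckin^\alpha([t_0/2,t_0]\times\R^3\times B_1(v_0-w))}\lesssim\langle v_0-w\rangle^{-m}\|\vv^m f\|_{\Ckin^\alpha}$ to make the integral $\int|w|^{\gamma+2+\alpha/3}\langle v_0-w\rangle^{-m}\dd w$ converge. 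So the exponent $2\alpha/3$ is \emph{not} produced by balancing a near piece against a far piece and optimizing $R$; it drops out immediately from the second term $\rho^{2\alpha/3}|w|^{\alpha/3}$, and the hypothesis $m>5+\gamma+\alpha/3$ is exactly the integrability condition for that single integral. Your split-and-optimize scheme could likely be made to work, but it is an unnecessary detour, and your explanation of the origin of $2\alpha/3$ misidentifies the mechanism. (Your treatment of $[f_{z_0}]_{\Ckin^{2\alpha/3}}$ via \eqref{e:rho_transformation} is essentially what the paper does, though the justification ``$\vvO^{2+\gamma+\alpha/3}$ is a non-sharp upper bound on $\vvO^{-m}$'' is an odd way to phrase what is really just discarding decay.)
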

\begin{proof}
	We prove only the first inequality; that is, the inequality for $\overline A$.  The bound for $\overline C$ is exactly analogous, and the bound for $f_{z_0}$ is straightforward after using~\eqref{e:rho_transformation}.
	
	The following calculation is similar to \cite[Lemma 3.3]{henderson2017smoothing}.  
For $z, z'\in Q_1$, let $\tilde z = \mathcal S_{z_0}(\delta_{r_1}z)$ and $\tilde z' = \mathcal S_{z_0}(\delta_{r_1} z')$.  For $\overline A(z) = S^{-1}\bar a^{f}(\mathcal S_{z_0}(\delta_{r_1}(z)))S^{-1}$, we have, using $|S^{-1}e|\lesssim \langle v_0\rangle^{-\gamma/2}|e|$,
\begin{equation}\label{e:Az}
\begin{split} 
	 &|\overline A(z) - \overline A(z')|
	 	\lesssim \vvO^{-\gamma}\int_{\R^3} |w|^{\gamma+2} |f(\tilde t, \tilde x, \tilde v - w) - f(\tilde t', \tilde x', \tilde v' -w)| \dd w\\
		&\qquad\lesssim \vvO^{-\gamma}  \int_{\R^3} |w|^{\gamma+2} \left(\rho(\tilde z,\tilde z')^\alpha + \rho(\tilde z,\tilde z')^{2\alpha/3} |w|^{\alpha/3}\right)  [f]_{\Ckin^\alpha([t_0/2,t_0]\times \R^3 \times B_1(v_0-w))}  \dd w.
\end{split}
\end{equation}
where we have used that
\[
	\rho((\tilde t, \tilde x, \tilde v - w), (\tilde t', \tilde x', \tilde v' - w)) \leq \rho(\tilde z,\tilde z') + \rho(\tilde z,\tilde z')^{2/3} |w|^{1/3},
\]
which can be seen by a direct computation.

The inequality $[gh]_{\Ckin^\alpha} \leq \|g\|_{L^\infty} [h]_{\Ckin^\alpha} + [g]_{\Ckin^\alpha} \|h\|_{L^\infty}$ implies 
\[
\begin{split}
	 [f]_{\Ckin^\alpha([t_0/2,t_0]\times \R^3 \times B_1(v_0-w))} 
 		&\lesssim \langle v_0-w\rangle^{-m} \|\vv^m f \|_{\Ckin^\alpha}.
 \end{split}
\]
(Here, and for the remainder of the proof, we write $\Ckin^\alpha = \Ckin^\alpha([t_0/2,t_0]\times \R^6)$.)  Feeding this estimate into \eqref{e:Az} and using~\eqref{e:rho_transformation} and that $m > \max\{3 ,5 + \gamma + \alpha/3\}$, we have
\[
	[\overline A]_{\Ckin^{2\alpha/3}(Q_1)}
		= \sup_{z\neq z'\in Q_1} \frac{|\overline A(z) - \overline A(z')|}{\rho(z,z')^{2\alpha/3}}
		\lesssim 
			\|\vv^m f \|_{\Ckin^\alpha} \langle v_0\rangle^{2 + \alpha/3}.
\]
A similar calculation, with $\gamma$ replacing $\gamma+2$, implies
\[
	[\overline C]_{\Ckin^{2\alpha/3}(Q_1)}
		\lesssim 
			\vvO^{-(2+\gamma)_+ + \gamma + \alpha/3} \|\vv^m f \|_{\Ckin^\alpha}.
\]
\end{proof}

\subsection{Existence of a solution: \Cref{t:exist}}

We are now ready to prove the existence of solutions:
\begin{proof}[Proof of Theorem \ref{t:exist}]  (i) Fix any $\e>0$ and define the following mollification and cut-off functions.  Fix any $\psi \in C_c^\infty(\R^6)$ such that $\psi \geq 0$ and $\int_{\R^6} \psi \dd x \dd v = 1$.  Let $\psi_\e = \e^{-6} \psi(x/\e, v/\e)$.  In addition, let $\zeta_\eps\in C^\infty(\R^3)$ be such that $\zeta_\eps(v) = 1$ when $|v|\leq 1/\eps$, $\zeta_\eps(v) = 0$ when $|v|\geq 1/\eps+1$, and $|\nabla_v\zeta_\eps|\lesssim 1$.

Then let
\begin{equation}\label{e:f_in}
f_{\rm in}^\eps = \zeta_\eps(v) (f_{\rm in}*\psi_\eps)(x,v).
\end{equation}
Note that $\|f_{\rm in}^\eps\|_{L^{\infty,k}} \lesssim \|f_{\rm in}\|_{L^{\infty,k}}$. The smoothed, cut-off initial condition $f_{\rm in}^\eps$ is compactly supported in $v$, smooth, and nonnegative, so $e^{\rho |v|^2}f_{\rm in}^\eps$ is in $H^4_{\rm ul}(\R^6)$ for any $\rho>0$.  Hence, we can apply \cite[Theorem 1.1]{HST2018landau} to obtain a solution $f^\eps:[0,T_\eps]\times \R^6\to \R_+$. We may assume $T_\eps$ is the maximal time of existence of the solution $f^\eps$. By the existence theorem \cite[Theorem 1.1]{HST2018landau}, we have $e^{(\rho/2)|v|^2} f^\eps \in L^\infty([0,T_\eps],H^4_{ul}(\R^6))\subset L^\infty([0,T_\eps]\times \R^6)$, which clearly implies $f^\eps \in L^{\infty,k}([0,T_\eps]\times \R^6)$. Since $k > 5$,  \Cref{l:infty} yields
\begin{equation}\label{e:kth-moment} 
\|f^\eps\|_{L^{\infty,k}([0,T_1], \R^6)} \leq C \|f^\eps_{\rm in}\|_{L^{\infty,k}} \leq C \|f_{\rm in}\|_{L^{\infty,k}}, 
\end{equation}
where $T_1 = \min(T_0,T_\eps)$, $T_0 \lesssim (\|f_{\rm in}\|_{L^{\infty,k_0}})^{-1}$, and $C$ is independent of $\eps$. 

For $\eps$ small enough (depending only on $\delta$, $R$, and $r$), we have that $f_{\rm in}^\eps(x,v)$ is well-distributed with parameters $\delta/2$, $r/2$, and $R$. Therefore, \cite[Theorem 1.5]{HST2018landau} implies $f^\eps$ can be extended for as long as 
\[\begin{cases}\|f^\eps(t,\cdot,\cdot)\|_{L^{\infty}_x L^{1,2}_v}, & \gamma\in (-2,0),\\ \|f^\eps(t,\cdot,\cdot)\|_{L^\infty_x L^{1,\ell}_v} + \|f^\eps(t,\cdot,\cdot)\|_{L^\infty_{x,v}}, &\gamma\in [-3,-2].\end{cases}\] 
remains finite, where $\ell > 3|\gamma|/(5+\gamma)$. This quantity is controlled by $\|f^\eps(t,\cdot,\cdot)\|_{L^{\infty,k}(\R^6)}$ since $k > \max\{5, 15/(5+\gamma)\}$. Therefore, \eqref{e:kth-moment} implies the maximal time of existence $T_\eps$ must be larger than $T_0$, i.e. $f^\eps$ exists for $t\in [0,T_0]$ for all $\eps$.


The bound \eqref{e:kth-moment} holds for $f^\eps$ on $[0,T_0]$, so the upper bounds for $\bar a^{f^\eps}$, $\bar b^{f^\eps}$, and $\bar c^{f^\eps}$ of Lemma \ref{l:abc} and Lemma \ref{l:mass-energy} hold independently of $\eps$, since all the relevant norms of $f$ are controlled by $\|f\|_{L^{\infty,k}}$. Since $f_{\rm in}^\eps$ is well-distributed, the smoothing theorem \cite[Theorem 1.3]{HST2018landau} implies $f^\eps$ is a $C^\infty$ classical solution of \eqref{e:landau}, with regularity estimates that may depend on $\eps$. The decay estimate \eqref{e:kth-moment} and Lemma \ref{l:lower-bounds} with $K \lesssim \|f_{\rm in}\|_{L^{\infty,k}}$ imply lower bounds for $f^\eps$ and $\bar a^{f^\eps}$ that depend only on $t$, $x$, $\delta$, $r$, and $\|f_{\rm in}\|_{L^{\infty,k}}$, but not on $\eps$, for $t\in [0,T_0]$.

Next, we want to apply local regularity estimates at any point $z_0 \in \R_+ \times \R^6$. To track the dependence of these estimates on $v$, we must use the change of variables defined in~\eqref{e:COV}.
%
Recall the definition of $f_{z_0}$ via~\eqref{e:transformation_L_infinity}. 
By \Cref{l:transformation_L_infinity} 
(which relies on our $\eps$-independent bounds for $\bar a^{f^\eps}$,  $\bar b^{f^\eps}$, and $\bar c^{f^\eps}$, most crucially the anisotropic bounds \eqref{e:anisotropic} and \eqref{e:lambda}), 
we have that $f_{z_0}^\eps$ satisfies
\begin{equation}\label{e:isotropic}
\partial_t f_{z_0}^\eps + v \cdot \nabla_x f_{z_0}^\eps = \nabla_v \cdot\left(\overline A(z)\nabla_v f_{z_0}^\eps\right) +  \overline B(z)\cdot \nabla_v f_{z_0}^\eps +  \overline C(z) f_{z_0}^\eps,
\end{equation}
which comes from~\eqref{e:landau}, and
\begin{equation}\label{e:isotropic-nondivergence}
\partial_t f_{z_0}^\eps + v \cdot \nabla_x f_{z_0}^\eps = \textup{\tr}\left( \overline A(z)D_v^2 f_{z_0}^\eps\right) +  \overline C(z) f_{z_0}^\eps,
\end{equation}
which comes from~\eqref{e:landau_nd}, in $Q_1$, with the coefficients
\begin{equation}\label{e:COV_coefficients}
	\begin{split}
		&\overline A(z) = S^{-1}\bar a^{f^\eps}(\mathcal S_{z_0}(\delta_{r_1}(z))) S^{-1},
		\qquad \overline B(z) = r_1S^{-1}\bar b^{f^\eps}(\mathcal S_{z_0}(\delta_{r_1}(z))),\\
		&\text{and} \qquad \overline C(z) = r_1^2\bar c^{f^\eps}(\mathcal S_{z_0}(\delta_{r_1}(z)))	
	\end{split}
\end{equation}
satisfying
\begin{equation}\label{e:lambdaLambda}
\lambda_{t_0} I \leq \overline A(z) \leq \Lambda I, \quad |\overline B(z)| + |\overline C(z)| \leq \Lambda,
\end{equation}
with $\Lambda$ depending only on $\|f^\eps\|_{L^{\infty,k}([0,T_0]\times \R^6)} \lesssim K$, and $\lambda_{t_0,x_0}$ depending on $K$, $t_0$, $\delta$, $r$, and $R$. The dependence on $t_0$ comes from $c(t)$ in Lemma \ref{l:lower-bounds}, which is uniformly positive on any compact subset of $(0,T]$.

The divergence-form equation \eqref{e:isotropic} allows us to apply \cite[Theorem 3]{golse2016} to $f_{z_0}^\eps$:
\begin{equation}\label{e:c1000}
	\|f_{z_0}^\eps\|_{\Ckin^\alpha(Q_{1/2})}
		\leq C(\|f_{z_0}^\eps\|_{L^2(Q_1)} + \|\overline C f_{z_0}^\eps\|_{L^\infty(Q_1)})
		\lesssim \langle v_0\rangle^{-k},
\end{equation}
with implied constant depending on $\lambda_{t_0}$ and $K$. The H\"older exponent $\alpha\in (0,1)$ also depends on $\lambda_{t_0}$ and $\Lambda$, and therefore on $K$. 
Undoing this change of variables and using~\eqref{e:rho_transformation}, we find that
\[
\begin{split}
	\|f^\eps\|_{\Ckin^\alpha(Q_{r_1/2}(z_0))}
		&\leq \min\{1,t_0\}^{-\alpha/2}\vvO^{\alpha((1+\gamma/2)_+-\gamma/2)}\|f_{z_0}^\eps\|_{\Ckin^\alpha(Q_{1/2})}\\
		&\lesssim \min\{1,t_0\}^{-\alpha/2}\vvO^{-k+\alpha((1+\gamma/2)_+-\gamma/2)},
		\end{split}
\]
where $r_1$ is defined in \eqref{e:r_1}. Applying the straightforward interpolation
\[
	\|g\|_{\Ckin^\alpha(Q_1(z_1))}
		\leq r^{-\alpha} \|g\|_{L^\infty(Q_1(z_1))} + \sup_{z_2 \in Q_1(z_1)} \|g\|_{\Ckin^\alpha(Q_r(z_2))}
\]
for any $g$, $z_1$, $z_2$, and $r$, 
we deduce
\begin{equation}\label{e:prelim_Holder}
	\|f^\eps\|_{\Ckin^\alpha(Q_1(z_0)\cap([0,T]\times \R^6))}
		\lesssim \min\{1, t_0\}^{-\alpha/2} \vvO^{-k + \alpha((1+\gamma/2)_+ -\gamma/2)}.
\end{equation}
Since it is not an important point in this proof, we absorb all dependence on $t_0$ into the implied constant for the remainder of this section.

Next, we pass this regularity to $\overline A$ and $\overline C$ via \Cref{l:Holder_bar_A}, 
which requires $[\vv^m f^\eps]_{\Ckin^\alpha([t_0/2,t_0]\times\R^6)} \leq C_{t_0}$, where $m > \max\{5,5 + \gamma + \alpha/3\}$.  By assumption $k > 5$ so this holds with $m = k - \alpha((1+\gamma/2)_+ - \gamma/2)$, up to decreasing $\alpha$.  Thus,
\[
	[\overline A]_{\Ckin^{2\alpha/3}(Q_1)}
		\lesssim \langle v_0\rangle^{2 + \alpha/3},
	\quad
	[\overline C]_{\Ckin^{2\alpha/3}(Q_1)}
		\lesssim \langle v_0\rangle^{-(2+\gamma)_+ + \gamma + \alpha/3},
\]
with constants depending on $t_0$ and $K$.  It is then straightforward to show that
\[
\begin{split}
	&\|\overline A\|_{\Ckin^{2\alpha/3}(Q_1)}
		\lesssim \langle v_0 \rangle ^{2+\alpha/3}, \text{ and }\\
	&[\overline C f_{z_0}^\e]_{\Ckin^{2\alpha/3}(Q_1)}
		\lesssim \vvO^{-k - (2+\gamma)_+ + \gamma + \alpha/3}.
\end{split}
\]
Now, using the non-divergence form equation \eqref{e:isotropic-nondivergence}, we can apply the Schauder-type estimate \cite[Theorem 2.9]{henderson2017smoothing} to $f_{z_0}^\eps$:
\begin{equation}\label{e:Dv2f}
\begin{split}
	[f_{z_0}^\eps]_{\Ckin^{2,2\alpha/3}(Q_{1/2})} 
		&\lesssim [\overline C f_{z_0}^\eps]_{\Ckin^{2\alpha/3}(Q_1)}
			+ \|\overline A\|_{\Ckin^{2\alpha/3}(Q_1)}^{3+\frac{2\alpha}{3} + \frac{3}{\alpha}} \|f_{z_0}^\eps\|_{L^\infty(Q_1)}\\
		& \lesssim \langle v_0 \rangle^{-k-(2+\gamma)_+ +\gamma + \alpha/3} + \langle v_0 \rangle^p \langle v_0\rangle^{-k},
\end{split}
\end{equation}
where $p>0$ depends on $\alpha$, which in turn depends on $K$ and $t_0$. The implied constant in \eqref{e:Dv2f} depends on the same quantities. 
 
Translating from $f_{z_0}^\eps$ back to $f^\eps$, we clearly see that $f^\eps$ is $\Ckin^{2,2\alpha/3}$ away from $t=0$ and that
\begin{equation}\label{e:c010}
	\|\vv^{k-p} f^\eps\|_{\Ckin^{2,2\alpha/3}([t_0/2,t_0]\times \R^6)}
		\lesssim 1
\end{equation}
for some $p$ depending on $K$ and $t_0$.  We, again, stress that the implied constant in~\eqref{e:c010} depends on $t_0$ and may degenerate faster than $t_0^{-1}$ to any power since $\lambda_{t_0}$ may be exponentially small in $t_0^{-1}$.

For any $z_0\in (0,T]\times \R^6$ and $\alpha'\in (0,2\alpha/3)$, since $\Ckin^{2,\alpha'}(Q_{r_1}(z_0))$ is precompact in $\Ckin^{2,2\alpha/3}(Q_{r_1}(z_0))$, a subsequence of $f^\eps$ converges to a limit $f$ in $\Ckin^{2,\alpha'}(Q_{r_1}(z_0))$. Since $z_0 \in (0,T]\times \R^6$ is arbitrary, we have that $f \in C^{2,2\alpha/3}_{\rm kin, loc}((0,T]\times \R^6)$ and satisfies a bound such as~\eqref{e:c010}.  
Since $f^\eps \to f$ pointwise, the bound \eqref{e:kth-moment} extends to $f$. By \eqref{e:kth-moment} and the Dominated Convergence Theorem, we conclude $\bar a^{f^\eps} \to \bar a^f$, $\bar b^{f^\eps} \to \bar b^f$, and $\bar c^{f^\eps} \to \bar c^f$ as $\eps \to 0$. This establishes the existence part of the theorem.

\medskip

(ii) The proof of \eqref{e:weak_equation} relies on the following lemma:
\begin{lemma}\label{lem:epsilon_convergence} 
	Fix any compact set $K_x \subset \R^3$.  With $f^\eps$ as above, we have
	\begin{enumerate}[label=(\roman*)]
		\item $f^\e \to f$ in $L^{2,p}([0,T)\times K_x \times \R^3)$ for any $p < 7/2$,
		\item $\bar a^{f^\e} \nabla_v f^\e \to \bar a^f \nabla_v f$ weakly in $L^{2,-(1+\gamma/2)}([0,T]\times K_x \times \R^3)$, and
		\item $f^\e \bar b^{f^\e}$ converges weakly to $f \bar b^f$ in $L^{2,p}([0,T]\times K_x \times \R^3)$ for any $p < \max\{5, 15/(5+\gamma)\} - 5/2 - \gamma$.
	\end{enumerate}
\end{lemma}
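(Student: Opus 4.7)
The three convergence statements share a common backbone: the uniform moment bound $\|f^\eps\|_{L^{\infty,k}([0,T_0]\times\R^6)}\lesssim\|f_{\rm in}\|_{L^{\infty,k}}$ from Lemma \ref{l:infty}, together with the local convergence $f^\eps\to f$ in $\Ckin^{2,\alpha'}$ on compact subsets of $(0,T]\times\R^6$ established earlier in the proof of (i). The latter, in particular, gives pointwise a.e.\ convergence of both $f^\eps$ and $\nabla_v f^\eps$ on $(0,T]\times\R^6$.

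For (i), the bound $|f^\eps|\lesssim\vv^{-k}$ with $k>5$ gives $|f^\eps-f|^2\vv^{2p}\lesssim\vv^{2(p-k)}$, which is integrable in $v$ provided $p<k-3/2$; taking $k$ slightly above $5$ covers every $p<7/2$, and dominated convergence on $[0,T)\times K_x\times\R^3$ (a set of finite $t$-$x$ measure on which pointwise a.e.\ convergence holds) concludes. For (iii), the bound $|\bar b^{f^\eps}(v)|\lesssim \vv^{(\gamma+1)_+}$ of Lemma \ref{l:abc} is too weak, so I would first refine it: splitting the convolution $\bar b^{f^\eps}(v)=b_\gamma\int|v-u|^\gamma(v-u)f^\eps(u)\dd u$ at $|u|=|v|/2$ and using $|v-u|\approx|v|$ on the inner piece together with $f^\eps\lesssim\vv^{-k}$ on the outer piece (which is negligible once $k>\gamma+4$) yields the sharper pointwise bound $|\bar b^{f^\eps}(v)|\lesssim\vv^{\gamma+1}$ uniformly in $\eps$. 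Combined with $|f^\eps|\lesssim\vv^{-k}$, this gives $|f^\eps\bar b^{f^\eps}|\lesssim\vv^{\gamma+1-k}$, which is square integrable with weight $\vv^{2p}$ precisely when $p<k-\gamma-5/2$, matching the stated range since $k$ may be taken arbitrarily close to $\max\{5,15/(5+\gamma)\}$. Pointwise convergence $\bar b^{f^\eps}\to \bar b^f$ (by dominated convergence applied to the convolution) combined with dominated convergence then upgrades to strong $L^{2,p}$ convergence of $f^\eps\bar b^{f^\eps}$, which is stronger than the claimed weak convergence.

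The main obstacle is (ii), because the $L^{\infty,k}$ bound does not directly control $\nabla_v f^\eps$; an energy estimate is needed. Let $\chi\in C_c^\infty(\R^3)$ be a spatial cutoff with $\chi\equiv 1$ on $K_x$. Testing \eqref{e:landau} against $\chi f^\eps$ and integrating in $x,v$ produces the identity
\[
\frac{1}{2}\frac{d}{dt}\int\chi(f^\eps)^2 + \int\chi\,\nabla_v f^\eps\cdot \bar a^{f^\eps}\nabla_v f^\eps = \int\Bigl(\tfrac12 v\cdot\nabla_x\chi\,(f^\eps)^2 + \chi\,\bar b^{f^\eps}\cdot\nabla_v f^\eps\, f^\eps + \chi\,\bar c^{f^\eps}(f^\eps)^2\Bigr).
\]
Integrating the drift term by parts in $v$ and using $\nabla_v\cdot\bar b^{f^\eps}= (\gamma+3)b_\gamma c_\gamma^{-1}\bar c^{f^\eps}$ (and $\bar c^f=cf$ when $\gamma=-3$) collapses the last two terms into a multiple of $\int\chi\,\bar c^{f^\eps}(f^\eps)^2$, which is controlled by $\|\bar c^{f^\eps}\|_{L^\infty}\|f^\eps\|_{L^2}^2\lesssim\|f^\eps\|_{L^{\infty,k}}^3$; the transport term is handled similarly using $k>5$. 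Integrating in time gives the uniform bound $\int_0^T\!\!\int \chi\,\nabla_v f^\eps\cdot\bar a^{f^\eps}\nabla_v f^\eps\lesssim 1$. Since $|\bar a^{f^\eps}\nabla_v f^\eps|^2\leq|\bar a^{f^\eps}|_{\rm op}\,\nabla_v f^\eps\cdot\bar a^{f^\eps}\nabla_v f^\eps\lesssim\vv^{\gamma+2}\,\nabla_v f^\eps\cdot\bar a^{f^\eps}\nabla_v f^\eps$ by Lemma \ref{l:abc}, the weight $\vv^{-2(1+\gamma/2)}=\vv^{-2-\gamma}$ exactly cancels and produces a uniform bound
\[
\int_0^T\!\!\int_{K_x\times\R^3} \vv^{-2(1+\gamma/2)}|\bar a^{f^\eps}\nabla_v f^\eps|^2 \lesssim 1.
\]
Weak compactness extracts a limit along a subsequence in $L^{2,-(1+\gamma/2)}$, and the pointwise a.e.\ convergence $\bar a^{f^\eps}\nabla_v f^\eps\to \bar a^f\nabla_v f$ on $(0,T]\times K_x\times\R^3$ (from the local $\Ckin^{2,\alpha'}$ convergence of $f^\eps$ together with dominated convergence for $\bar a^{f^\eps}$) identifies this weak limit as $\bar a^f\nabla_v f$, completing the lemma.
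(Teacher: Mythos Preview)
Your proof is correct and, for part (ii), follows essentially the same route as the paper: an energy estimate against a spatial cutoff yields the uniform bound $\int_0^T\int\chi\,\nabla_v f^\eps\cdot\bar a^{f^\eps}\nabla_v f^\eps\lesssim 1$, and the operator-norm inequality $|\bar a\xi|^2\le|\bar a|_{\rm op}\,\xi\cdot\bar a\xi$ together with $|\bar a^{f^\eps}|_{\rm op}\lesssim\vv^{\gamma+2}$ produces exactly the weighted $L^{2,-(1+\gamma/2)}$ bound, after which weak compactness and local $\Ckin^{2,\alpha'}$ convergence identify the limit. (The paper spells out the subsequence-of-subsequence step to pass from subsequential to full-sequence weak convergence; you should make this explicit.)

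Where you differ is in (i) and (iii). The paper proves (i) by splitting the integral at a small time $\delta$ and a large velocity $1/\delta$, then sending $\delta\to 0$; your direct dominated-convergence argument, using the uniform majorant $\vv^{2(p-k)}\in L^1_v$ for $p<k-3/2$ and pointwise a.e.\ convergence away from $\{t=0\}$, is shorter and equally valid. For (iii) the paper simply says the argument is ``similar to and easier than (ii)'', i.e.\ a uniform weighted $L^2$ bound plus weak compactness. Your route is different: you first sharpen the coefficient bound to $|\bar b^{f^\eps}|\lesssim\vv^{\gamma+1}$ (correct, via the near/far splitting you describe, once $k>5$), then apply dominated convergence to get \emph{strong} $L^{2,p}$ convergence of $f^\eps\bar b^{f^\eps}$, which is more than what is claimed. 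This buys you a cleaner argument at the cost of proving the refined decay of $\bar b^{f^\eps}$; the paper's implicit approach avoids that refinement but only obtains the weak convergence actually needed. One cosmetic point: your phrases ``taking $k$ slightly above $5$'' and ``$k$ may be taken arbitrarily close to $\max\{5,15/(5+\gamma)\}$'' are misleading, since $k$ is fixed by the initial data; what you are really using is $k>5$ (for (i)) and $k>\max\{5,15/(5+\gamma)\}$ (for (iii)), which already gives the stated ranges of $p$.
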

We postpone the proof of this lemma momentarily and proceed with the proof of Theorem \ref{t:exist}.(ii). Recall that our test functions $\phi$ satisfy
	\[\begin{split}
	&\phi \in C([0,T]; L^{2,-7/2+\eta}(K_x\times\R^3)),
	\quad (\partial_t + v\cdot\nabla_x)\phi \in L^{2,-7/2+\eta}([0,T]\times K_x\times \R^3)\\
	&\nabla_v \phi \in L^{2,1+\gamma/2}([0,T]\times K_x \times \R^3)
	\quad \text{ and } \quad \supp(\phi) \subset [0,T)\times K_x \times \R^3,
	\end{split}\]
	for some small $\eta >0$. Note that the collision term $Q(f,f)$ in \eqref{e:landau} may be written $Q(f,f) = \nabla_v\cdot(\bar a^f\nabla_v f + \bar b^f f)$.  
Since $f^\e$ is smooth and satisfies~\eqref{e:landau}, we find
\begin{equation}\label{e:epsilon_weak_equation}
	\int_{\R^6} f_{\rm in}^\e \phi(0) \dd x \dd v
		= \int_{[0,T]\times \R^6} \left(f^\e (\partial_t + v\cdot \nabla_x) \phi - \nabla_v \phi \cdot (\bar a^{f^\e} \nabla_v f^\e) - f^\e \bar b^{f^\e} \cdot \nabla_v \phi \right) \dd x \dd v \dd t,
\end{equation}
where $\phi(0) = \phi|_{\{t=0\}}$. It is straightforward to show from the definition \eqref{e:f_in} that $f_{\rm in}^\e \to f_{\rm in}$ in $L^2_{\rm loc}$ as $\e\to 0$.  On the other hand, by \Cref{lem:epsilon_convergence}, the 
right hand side of~\eqref{e:epsilon_weak_equation} converges to
\[
	\int_{[0,T]\times \R^6} \left(f (\partial_t + v\cdot \nabla_x) \phi - \nabla_v \phi \cdot( \bar a^{f} \nabla_v f) - f \bar b^f \cdot \nabla_v \phi\right) \dd x \dd v \dd t
\]
as $\e\to0$.  Thus, we recover~\eqref{e:weak_equation} as claimed.

\medskip

(iii) For higher regularity, we return to the sequence $f_{z_0}^\eps$ and apply the argument of \cite{henderson2017smoothing}: pass the regularity provided by \eqref{e:c010} to the coefficients of the equation (if the pointwise decay in $v$ of $f^\eps$ is sufficiently strong, i.e. if $k$ is large enough compared to $p$), apply local Schauder estimates, and repeat, differentiating the equation to estimate higher derivatives in $C^{\alpha}_{\rm kin, loc}$. As $\eps\to 0$, this implies the same local regularity for $f$, with $\alpha'$ replacing $\alpha$. The number of iterations allowed is limited by the decay of $f^\eps$ in $v$ (which, by \eqref{e:kth-moment}, is determined by the decay of $f_{\rm in}$). If $f_{\rm in}$ decays faster than any polynomial, then the solution $f_{z_0}^\eps$ is $C^\infty$, which implies $f$ is $C^\infty$ as in \cite{henderson2017smoothing}. The details are omitted.
\end{proof}

We now prove the lemma.
\begin{proof}[Proof of \Cref{lem:epsilon_convergence}]

(i) We first recall that $f^\e \to f$ locally uniformly in $(0,T]\times \R^6$.  Thus, for any $\delta>0$, we have
\[\begin{split}
	\limsup_{\e\to0} &\int_0^T\int_{K_x\times \R^3} \langle v\rangle^{2p}|f^\e-f|^2 \dd x \dd v \dd t\\
		&\leq \limsup_{\e\to0} \int_0^\delta\int_{K_x \times \R^3} \langle v\rangle^{2p} |f^\e-f|^2 \dd x \dd v \dd t
			 + \limsup_{\e\to0} \int_\delta^T\int_{K_x \times B_{1/\delta}(0)}\langle v\rangle^{2p}  |f^\e-f|^2 \dd x \dd v \dd t\\
			&\qquad + \limsup_{\e\to0} \int_\delta^T\int_{K_x\times B_{1/\delta}(0)^c}\langle v\rangle^{2p}  |f^\e-f|^2 \dd x \dd v \dd t\\
		&\leq C\delta |K_x| (\limsup_{\e\to0}\|f^\e\|_{L^{\infty,5}} + \|f\|_{L^{\infty,5}})^2 + 0
			+ C(T-\delta) |K_x| (\limsup_{\e\to0}\|f^\e\|_{L^{\infty,5}} + \|f\|_{L^{\infty,5}})^2 \delta^{7-2p}.
\end{split}\]
Recall that $2p < 7$.  Taking $\delta\to0$ establishes the result.
%
%
%

\medskip

\medskip

(ii) We begin by showing that $|\bar a^{f^\e} \nabla_v f^\e|$ is bounded in $L^{2,-(1+\gamma/2)}([0,T]\times K_x\times \R^3)$.  This guarantees that $\bar a^{f^\e} \nabla_v f^\e$ has a weak subsequential limit.  First, note that
\[\begin{split}
	\|\bar a^{f^\e} \nabla_v f^\e\|_{L^{2,-(1+\gamma/2)}}^2
		&= \int_0^T \int_{K_x \times \R^3} \langle v \rangle^{-(2+\gamma)} |(\bar a^{f^\e})^{1/2} (\bar a^{f^\e})^{1/2} \nabla_v f|^2 \dd v \dd x \dd t\\
		&\lesssim \int_0^T \int_{K_x \times \R^3} \langle v \rangle^{-(2+\gamma)} \left( \langle v \rangle^{2 + \gamma} \|f^\e\|_{L^{\infty,k}}\right)|(\bar a^{f^\e})^{1/2} \nabla_v f|^2 \dd v \dd x \dd t \\
		&= \|f^\e\|_{L^{\infty,k}}\int_0^T \int_{K_x \times \R^3} \nabla_v f^\e \cdot (\bar a^{f^\e} \nabla_v f) \dd v \dd x \dd t,
\end{split}\]
since $\bar a^{f^\eps}$ is symmetric. Therefore, it is enough to show that $(\bar a^{f^\e})^{1/2} \nabla_v f^\e$ is bounded  in $L^2([0,T]\times K_x\times \R^3)$ uniformly in $\e$.  To this end, fix any non-negative $\psi\in C^\infty_c(\R^3)$ that equals $1$ on $K_x$.  Using that $f^\e$ satisfies~\eqref{e:landau} and that $\bar a^{f^\e}$ is non-negative definite, we find
\begin{equation}\label{e:c2}
\begin{split}
	0 \leq \int_0^T&\int_{K_x} \nabla_v f^\e\cdot (\bar a^{f^\e} \nabla_v f^\e) \dd x \dd v \dd t
		\leq \int_0^T\int_{\R^6} \psi \nabla_v f^\e \cdot (\bar a^{f^\e} \nabla_v f^\e) \dd x \dd v \dd t\\
		&\leq \frac{1}{2}\int_{\R^6} \psi |f_{\rm in}^\e|^2 \dd x \dd v
			+ \frac{1}{2}\int_0^T \int_{\R^6} |f^\e|^2 (v \cdot \nabla_x \psi + \frac{1}{2} \bar c^f \psi)\dd x \dd v \dd t.
\end{split}
\end{equation}
It is clear that the right hand side is bounded uniformly in $\eps$, due to the uniform $L^{\infty,k}(\R^6)$ bound that holds on $f_{\rm in}^\e$ and $f^\eps$.  

We next show that any subsequence of $\bar a^{f^\e} \nabla_v f^\e$ has a subsequence that converges weakly to $\bar a^f \nabla_v f$. It is an elementary fact that this is equivalent to the weak convergence of $\bar a^{f^\e} \nabla_v f^\e$ to $\bar a^{f} \nabla_v f$.  Fix any subsequence $\e_n \to 0$.  From the uniform bound above, we find $g \in L^{2, -(1+\gamma/2)}([0,T]\times K_x \times \R^3)^3$ and a further subsequence $\e_{n_j}\to0$ such that $\bar a^{f^{\e_{n_j}}} \nabla_v f^{\e_{n_j}}$ converges weakly to $g$ in $L^2([0,T]\times K_x\times\R^3)^3$.

On the other hand, from the Schauder estimates of the proof of part (i), and the uniform bounds on $f^\e \in L^{\infty,5}$, we find that $f^{\e_{n_j}} \in \Ckin^{2,2\alpha/3}(K)$ and $\bar a^{f^{\e_{n_j}}} \in \Ckin^{2\alpha/3}(K)$ with uniform-in-$\e$ bounds for any compact $K \subset(0,T]\times \R^6$.  Thus, up to passing to a further subsequence, $\nabla_v f^{\e_{n_j}} \to \nabla_v f$ and $\bar a^{f^{\e_{n_j}}} \to \bar a^f$ uniformly on $K$.  It follows that $g= (\bar a^f)^{1/2} \nabla_v f$ on $K$.  Since this holds for any $K$, we find $g= \bar a^f \nabla_v f$ almost everywhere in $[0,T]\times K_x\times \R^3$, which concludes the proof.

%

\medskip

(iii) We omit this proof as it is similar to and easier than the proof of (ii).
\end{proof}

\subsection{Continuation of solutions: \Cref{t:continuation}} Now, we prove our continuation criterion for the solutions of Theorem \ref{t:exist}:

\begin{proof}[Proof of Theorem \ref{t:continuation}]
%
Suppose first that $\gamma \in [-3,-2]$. If, for some $T'>0$, the quantity $\Psi(T') = \|f\|_{L^\infty_{t,x} L^p_v([0,T']\times\R^6)} + \|f\|_{L^{\infty}_{t,x} L^1_v([0,T']\times\R^6)}$ is finite, Lemma \ref{l:infty} yields
\[\|f(t,\cdot,\cdot)\|_{L^{\infty,k}(\R^6)} \leq  \|f_{\rm in}\|_{L^{\infty,k}(\R^6)} e^{C T'}, \quad 0\leq t \leq T',\]
with $C$ depending on $\Psi(T'))$. 
Since $f_{\rm in}$ is well-distributed, Lemma \ref{l:lower-bounds} with $K = \Psi(T')$ implies $f(T',\cdot,\cdot)$ is well-distributed with parameters that can only degenerate if $T'\to \infty$. Therefore, $f(T',\cdot,\cdot)$ satisfies the hypotheses of Theorem \ref{t:exist}, and we can continue the solution past $t=T'$. 

If $\gamma \in (-2,0)$, we can bound $\|f\|_{L^\infty_{t,x}L^p_v([T'/2,T']\times \R^6)}$ by applying the argument of \cite[Theorem 1.1]{cameron2017landau}. This gives an upper bound for $\|f\|_{L^\infty([T'/2,T']\times\R^6)}$ depending only on $\|f\|_{L^\infty_{t,x}L^{1,2}_v}$ and the lower ellipticity bound for $\bar a^f$ on $[T'/2,T']$ given by Lemma \ref{l:lower-bounds} (which can be bounded in terms of $\Psi(T')$ and the initial data). 
This gives a bound for $f$ in $L^\infty_x L^p_v$ for $t\in [T'/2,T']$, which, combined with the reasoning of the previous paragraph, lets us
continue the solution past $t=T'$.
\end{proof}

\section{Pointwise matching of initial data}\label{s:initial}

In this section we show that, under the additional assumption that $f_{\rm in}$ is continuous, we have $f_{\rm in}(x,v) = \lim_{t\to 0+} f(t,x,v)$. The proof uses a simple barrier argument.

\begin{proposition}\label{p:initial}
	Let $f$ be the solution to \eqref{e:landau} constructed in \Cref{t:exist}. 
If $f_{\rm in} = f(0,\cdot,\cdot)\in L^{\infty,k}(\R^6)$ is continuous, then $f(t,x,v) \to f_{\rm in}(x,v)$ as $t\to 0+$, uniformly on compact sets of $\R^6$.  If $f_{\rm in}$ is uniformly continuous in $x$, then the convergence as $t\to 0+$ is uniform.
\end{proposition}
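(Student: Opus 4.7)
The plan is a local maximum principle comparison carried out on the approximating sequence $f^\eps$ from the existence proof, which is smooth down to $t=0$ with $f^\eps(0,\cdot,\cdot)=f_{\rm in}^\eps\to f_{\rm in}$ locally uniformly and satisfies $\eps$-uniform $L^{\infty,k}$ bounds. Fix $(x_0,v_0)\in\R^6$ and $\delta>0$; by continuity of $f_{\rm in}$ choose $\rho\in(0,1)$ with $|f_{\rm in}(x,v)-f_{\rm in}(x_0,v_0)|<\delta/4$ on $B_\rho(x_0,v_0)$, and pick $\phi\in C^\infty(\R^6;[0,1])$ with $\phi\equiv 0$ on $B_{\rho/2}(x_0,v_0)$ and $\phi\equiv 1$ off $B_\rho(x_0,v_0)$. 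The supersolution will be
\begin{equation*}
\overline{f}(t,x,v) := f_{\rm in}(x_0,v_0) + \tfrac{\delta}{2} + At + B\,\phi(x-tv,v),
\end{equation*}
where $A,B>0$ are chosen below. Using the approximating sequence rather than $f$ directly is necessary because the continuity at $t=0$ that would justify applying the maximum principle to $f$ itself is precisely what we are trying to prove.

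The crucial feature of the cutoff $\phi(x-tv,v)$ is the transport-invariance $(\partial_t+v\cdot\nabla_x)[\phi(x-tv,v)]\equiv 0$, so under the linear operator $L^\eps g := (\partial_t+v\cdot\nabla_x)g - \tr(\bar a^{f^\eps}D_v^2 g) - \bar c^{f^\eps}g$ one obtains
\begin{equation*}
L^\eps\overline{f} = A - B\,\tr\bigl(\bar a^{f^\eps}D_v^2[\phi(x-tv,v)]\bigr) - \bar c^{f^\eps}\bigl(f_{\rm in}(x_0,v_0)+\tfrac{\delta}{2}+At+B\phi\bigr).
\end{equation*}
The $v$-derivatives of $\phi(x-tv,v)$ are supported in $\overline{B_\rho(x_0,v_0)}$, a compact $v$-set on which $\bar a^{f^\eps}$ is $\eps$-uniformly bounded by \Cref{l:abc}; $\bar c^{f^\eps}$ is $\eps$-uniformly bounded everywhere. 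Thus, choosing $B\geq \sup_\eps\|f_{\rm in}^\eps\|_{L^\infty}$ and then $A$ sufficiently large (depending on $B$, $\delta$, those coefficient bounds, and $\|D^2\phi\|_\infty$), and restricting to $t\in[0,\tau]$ with $\tau>0$ small, produces $L^\eps\overline{f}\geq 0$. At $t=0$ one checks $\overline{f}(0,\cdot)\geq f_{\rm in}^\eps$ pointwise: inside $B_{\rho/2}(x_0,v_0)$ by the $\delta/2$ cushion (taking $\eps$ small so $|f_{\rm in}^\eps-f_{\rm in}|<\delta/8$ on a compact neighborhood), and off $B_\rho(x_0,v_0)$ by the size of $B$.

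A standard maximum principle comparison on $[0,\tau]\times\R^6$ -- after the substitution $h=e^{-\|\bar c^{f^\eps}\|_\infty t}(\overline{f}-f^\eps)+\eta t$ to convert the zeroth-order term to a favorable sign and enforce strict inequality, and using $f^\eps(t,x,v)\lesssim\vv^{-k}\to 0$ as $|v|\to\infty$ while $\overline{f}\gtrsim B$ there (so any negative infimum would be attained at an interior point and contradict the equation) -- yields $f^\eps\leq\overline{f}$ on $[0,\tau]\times\R^6$. Sending $\eps\to 0$ using pointwise convergence $f^\eps\to f$ on $(0,T]\times\R^6$ established in \Cref{t:exist}.(i), and evaluating at $(t,x_0,v_0)$ with $t$ small enough that $t|v_0|<\rho/2$ (so $\phi(x_0-tv_0,v_0)=0$) and $At\leq\delta/2$, gives $f(t,x_0,v_0)\leq f_{\rm in}(x_0,v_0)+\delta$. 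A symmetric argument with $\underline{f}:=f_{\rm in}(x_0,v_0)-\tfrac{\delta}{2}-At-B\phi(x-tv,v)$ supplies the matching lower bound. For the local uniform statement, the parameters $\rho,B,A,\tau$ depend on $(x_0,v_0)$ only through the local oscillation of $f_{\rm in}$ and on $|v_0|$, both uniform on compact sets. For the global uniform statement under uniform continuity in $x$, one combines this with the observation that both $f(t,x,v)$ and $f_{\rm in}(x,v)$ are dominated by $C\vv^{-k}$ uniformly in $t\in[0,\tau]$ via \Cref{l:infty}, so for $|v|$ large the difference is automatically small and only a $v$-compact regime remains. The main technical point throughout is arranging $\overline{f}$ to be a genuine supersolution in the face of the coefficient $\bar a^{f^\eps}$ growing like $\vv^{(\gamma+2)_+}$; the compact $v$-support of the derivatives of $\overline{f}$ via the transport-invariant cutoff neatly localizes the bad growth away.
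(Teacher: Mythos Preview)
Your argument is correct and follows the same barrier-comparison strategy as the paper: work with the smooth approximations $f^\eps$, construct a barrier from transport-invariant pieces (functions of $(x-tv,v)$), verify $L^\eps\overline f\geq 0$, check the ordering at $t=0$, and conclude by the maximum principle. The paper's barrier is the explicit quadratic $\overline h(t,x,v)=e^{\beta t}[M(|x-x_0-vt|^2+|v-v_0|^2)+\eta+f_{\rm in}(x_0,v_0)+\rho t]$, and the comparison is run on the bounded cylinder $[0,\tau]\times B_\delta(x_0,v_0)$ after verifying $\overline h\geq\|f\|_\infty$ on the lateral boundary; you instead take a smooth bounded cutoff $B\phi(x-tv,v)$ and work on all of $[0,\tau]\times\R^6$, using $\overline f-f^\eps>0$ near infinity to localize any touching point. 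The paper's quadratic has constant Hessian $2Me^{\beta t}(1+t^2)\Id$, which makes checking $L\overline h\geq 0$ immediate but requires a lateral boundary check; your cutoff compactly supports $D_v^2\overline f$ so the $\vv^{(\gamma+2)_+}$ growth of $\bar a^{f^\eps}$ is irrelevant, at the cost of an argument at infinity.

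Two small corrections. First, $B\geq\sup_\eps\|f_{\rm in}^\eps\|_{L^\infty}$ is not enough to guarantee $\overline f-f^\eps\geq 0$ on $\{\phi\equiv 1\}$ for $t>0$; you need $B\geq\sup_\eps\|f^\eps\|_{L^\infty([0,T_0]\times\R^6)}$, which is still $\eps$-uniform by \eqref{e:kth-moment}. Second, you only treat $|v|\to\infty$ in ruling out an infimum at infinity; the case $|x|\to\infty$ with $|v|$ bounded must also be noted, though it follows immediately since $\phi(x-tv,v)=1$ there and the same inequality applies.
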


\begin{proof}
We work with the approximating solutions $f^\e$ from \Cref{t:exist}, obtaining a uniform bound in $\e$.  Thus, we obtain the result in the limit $\e\to0$.  Importantly, these $f^\e$ are smooth on $[0,T]\times\R^6$, so we may use the classical comparison principle.  In an abuse of notation, we denote $f^\e$ simply by $f$ for the remainder of the proof.

We show that continuity of $f_{\rm in}$ at a fixed $(x_0,v_0)\in \R^6$ implies $f(t,x_0,v_0) \to f_{\rm in}(x_0,v_0)$ as $t\to 0$. 
Fix $\eta>0$.  If $|v_0|$ is sufficiently large, then the finiteness of $\|f\|_{L^{\infty,k}([0,T]\times\R^6)}$ guarantees that $f(t,x,v_0) < \eta$ for all $x$ and for all $t$ sufficiently small.  Hence, we need only consider $v_0 \in B_{R_\eta}(0)$ for some large $R_\eta>0$ depending only on $\eta$ and $\|f\|_{L^{\infty,k}([0,T]\times \R^6)}$.

Let $\delta>0$ be such that 
\[
	|f_{\rm in}(x,v) - f_{\rm in}(x_0,v_0)| < \eta,
		\quad \mbox{ if } |x-x_0|^2 + |v-v_0|^2 \leq \delta^2.
\]
 With $M,\beta,\rho>0$ to be determined, define
\[
	\overline h(t,x,v)
		= e^{\beta t} \left[ M( |x-x_0-vt|^2 + |v-v_0|^2) + \eta + f_{\rm in}(x_0,v_0) + \rho t\right].
\]
Recall the linear operator $Lg = \partial_t g + v\cdot \nabla_x g - \tr(\bar a^f D_v^2 g) - \bar c^f g$ from the proof of Lemma \ref{l:infty}. We have $D_v^2 \overline h = 2 M e^{\beta t}(1+t^2) \mbox{Id}$, and
\begin{align*}
	L \overline h
		&= \beta \overline h + \rho e^{\beta t}
			 -2 Me^{\beta t}(1+t^2) \tr(\bar a^f ) - \bar c^f \overline h.
\end{align*}
Since $\bar c^f \lesssim \|f\|_{L^\infty([0,T],L^{\infty,k}(\R^6))}$ and $\bar a^f \lesssim \vv^{(\gamma+2)_+}
\|f\|_{L^\infty([0,T],L^{\infty,k}(\R^6))}$ (by Lemma \ref{l:abc}), one has $L\overline h \geq 0$ in $[0,1]\times \R^6$, provided $\beta$ and $\rho$
are large enough. By our choice of $\delta$, we have
\[f_{\rm in}(x,v) \leq f_{\rm in}(x_0,v_0) + \eta \leq \overline h(0,x,v), \quad \mbox{ for } |x-x_0|^2 + |v-v_0|^2 \leq \delta^2.\]
Next, choose $M>0$ large enough so that, for $t \leq \delta /(4(R_\eta +\delta))$ and $|x-x_0|^2 + |v-v_0|^2 = \delta^2$, there holds
\[
\begin{split}
\overline h(t,x,v) &\geq M(|x-x_0|^2+t^2|v|^2 - 2t (x-x_0)\cdot v + |v-v_0|^2)\\
 &\geq M(\delta^2 - 2t(x-x_0)\cdot v)\\
  &\geq M\delta^2/2 \geq \|f\|_{L^\infty([0,T]\times \R^6)}.
\end{split}
\]
We may now apply the maximum principle to obtain $f(t,x,v) \leq \overline h(t,x,v)$ in $[0, \delta/(4(R_\eta +\delta))]\times B_\delta(x_0,v_0)$.

Let $\delta' =  \sqrt{\eta/M}$ and 
\[
	t_0=\min\left\{\frac{1}{\beta}\log(1+\eta),\ \frac{\delta'}{(|v_0|+\delta')}, \ \frac{\eta}{\rho}\right\}.
\]
If $(t,x,v) \in [0,t_0) \times B_{\delta'}(x_0,v_0),$ we have
\[
	|x-x_0-tv|^2 + |v-v_0|^2
		\leq 2|x-x_0|^2  + |v-v_0|^2 + 2t^2|v|^2
		< 4(\delta')^2,
\]
and, thus, 
\[
\begin{split}
	 f(t,x,v)
	 	&\leq \overline h(t,x,v)
	 	< e^{\beta t} \left[ 4 M (\delta')^2 + \eta + f_{\rm in}(x_0,v_0) + \rho t\right]
	 	< (1+\eta)\left[ 4 \eta + \eta + f_{\rm in}(x_0,v_0) + \eta\right].
\end{split}
\]
Since $\eta$ was arbitrary, we conclude $\limsup_{t\to 0} f(t,x_0,v_0) \leq f_{\rm in}(x_0,v_0)$.

For the reverse inequality, define
\[\underline h(t,x,v) =  e^{-\beta t} \left[f_{\rm in} - M(|x-x_0-vt|^2 - |v-v_0|^2) - \eta -\rho t\right].\]
By a similar calculation, we have $L\underline h \leq 0$ for $t\leq 1$, and $f_{\rm in}(t,x,v) \geq \underline h(t,x,v)$ on the parabolic boundary of $[0,\delta/(2(|v_0|+\delta))]\times B_\delta(x_0,v_0)$, if $M,\beta,\rho$ are chosen large enough. The rest of the proof follows similarly to establish that
$f_{\rm in}(x_0,v_0) = \lim_{t\to 0+} f(t,x_0,v_0)$. 

Clearly, $\delta$ (and therefore $M$, $\delta'$, and $t_0$) can be chosen uniformly on any compact set of $\R^6$. If $\delta$ is independent of $x_0$, then $t_0$ can be chosen depending only on $\eta$ and $\|f\|_{L^{\infty,k}([0,T]\times\R^6)}$ (recall that $R_\eta$ depends only on $\eta$ and $\|f\|_{L^{\infty,k}([0,T]\times \R^6)}$), which yields the uniform convergence.  Thus, the proof is complete.
\end{proof}

\section{Propagation of H\"older regularity and higher regularity estimates}\label{s:holder}

In this section, we prove that if $f_{\rm in}(x,v)$ is H\"older continuous, with H\"older norm decaying appropriately for large velocities, then our solution $f$ is H\"older continuous in $(t,x,v)$ up to some short time $T_H$. This is a necessary ingredient of our proof of uniqueness (see Section \ref{s:unique}).



In this section, we are not interested in the dependence of constants on $\|f\|_{L^{\infty,k}}$.  Thus, we allow the implied constant in any ``$\lesssim$'' to depend on $\|f\|_{L^{\infty,k}([0,T]\times\R^6)}$.

Following the proof outline given in the introduction, the first step is to revisit the Schauder estimates from the proof of Theorem \ref{t:exist}.(i) under stronger assumptions on $f_{\rm in}$ that ensure good lower bounds for $f$ as $t\to 0$. The non-scale-invariance of this estimate reflects the dependence of the coefficients in \eqref{e:landau_nd} on $f$.

\begin{lemma}\label{l:weighted-Schauder}
Let $f \in L^{\infty,k}([0,T]\times \R^6)$ be a solution to \eqref{e:landau} with $f_{\rm in}$ satisfying the hypotheses of Theorem \ref{t:exist}. Assume in addition that for all $x$, there is some $|v_m|<R$ with $f_{\rm in}(\cdot,\cdot)\geq \delta \1_{B_r(x,v_m)}$. For any $t_0\in (0,T]$, we have
\[
	\|\vv^{m-q(\gamma,\alpha,k,m)}D_v^2 f\|_{L^\infty([t_0/2,t_0]\times \R^6)}
		\lesssim  t_0^{-1+\alpha^2/(6-\alpha)}(1+ \|\vv^m f\|_{\Ckin^\alpha([t_0/2,t_0]\times \R^6)})^{p(\alpha)},
\]
for any $m\in(\max\{3,  5 + \gamma + \alpha/3\}, k]$ such that the right-hand side is finite, with
\[
\begin{split}
	p(\alpha)
		&= 3+2\alpha/3+3/\alpha,\\
	q(\gamma,\alpha, k, m)
		&= (2+\gamma)_+ - \gamma\\
		&\quad + \left(1-\frac{\alpha^2}{6-\alpha}\right) \max\left\{-(2+\gamma)_+ + \gamma - (k-m)/3, (2 + \alpha/3)p(\alpha) - k + m\right\}
\end{split}
\] 
and the implied constant depending only on $m$, $\gamma$, $\alpha$, $T$, $\delta$, $r$, $R$, $k$, and $\|f\|_{L^{\infty,k}([t_0/2,t_0]\times\R^6)}$.
\end{lemma}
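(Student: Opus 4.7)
The plan is to apply the Schauder estimate of \cite[Theorem 2.9]{henderson2017smoothing} in the uniformly-elliptic coordinates introduced in the existence proof, and to interpolate the resulting $\Ckin^{2,2\alpha/3}$ bound against an improved kinetic H\"older modulus of $f_{z_0}$ itself, thereby extracting the $t_0^{-1+\alpha^2/(6-\alpha)}$ decay.  The strengthened lower bound hypothesis — existence at every $x$ of $v_x$ with $f_{\rm in}\geq \delta \1_{B_r(x,v_x)}$ — is crucial because it allows the last statement of \Cref{l:lower-bounds} to be invoked: the lower ellipticity constant for $\bar a^f$ is then uniform on $[0,T]\times\R^6$, so the transformed matrix $\overline A$ has ellipticity constants on $Q_1$ that are independent of $t_0$.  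Without this strengthening, one would have to absorb additional time-degeneracy from $c_1(t)$ of \Cref{l:lower-bounds}.

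Fix $z_0 = (t_0, x_0, v_0)$ and set $f_{z_0}(z) = f(\mathcal{S}_{z_0}(\delta_{r_1}z))$ on $Q_1$.  By \Cref{l:transformation_L_infinity} and \Cref{l:Holder_bar_A}, the transformed coefficients $\overline A$ and $\overline C$ are uniformly elliptic and bounded with $\Ckin^{2\alpha/3}$ norms controlled by $\vvO$-weighted H\"older norms of $f$.  Applying the Schauder estimate to~\eqref{e:isotropic-nondivergence} gives
\[
	[D_v^2 f_{z_0}]_{\Ckin^{2\alpha/3}(Q_{1/2})} \lesssim [\overline C f_{z_0}]_{\Ckin^{2\alpha/3}(Q_1)} + \|\overline A\|_{\Ckin^{2\alpha/3}(Q_1)}^{p(\alpha)} \|f_{z_0}\|_{L^\infty(Q_1)},
\]
with $p(\alpha) = 3+2\alpha/3 + 3/\alpha$.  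Using \Cref{l:Holder_bar_A} and the bound $\|f_{z_0}\|_{L^\infty(Q_1)}\lesssim \vvO^{-k}$ from the $L^{\infty,k}$ decay of $f$, the right-hand side is dominated by $\vvO^{E}$ times a polynomial in $\|\vv^m f\|_{\Ckin^\alpha}$, where $E$ is the maximum of the two exponents appearing inside the $\max$ in the formula for $q$.

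The new ingredient beyond the existence proof is an improved kinetic H\"older bound on $f_{z_0}$: using the scaling relation~\eqref{e:rho_transformation} with the \emph{full} exponent $\alpha$ (rather than the coarser $2\alpha/3$ recorded in \Cref{l:Holder_bar_A}) yields
\[
	[f_{z_0}]_{\Ckin^\alpha(Q_1)} \lesssim \min\{1,\sqrt{t_0/2}\}^\alpha\,\vvO^{-m}\,\|\vv^m f\|_{\Ckin^\alpha},
\]
which carries the crucial $t_0^{\alpha/2}$ factor.  I combine this with the Schauder output via the pointwise interpolation
\[
	|D_v^2 u(0)| \lesssim [u]_{\Ckin^\alpha(Q_1)}^{\,\theta_1}\,[D_v^2 u]_{\Ckin^{2\alpha/3}(Q_{1/2})}^{\,\theta_2}, \quad \theta_1 + \theta_2 = 1,\ \alpha\theta_1 + (2+\tfrac{2\alpha}{3})\theta_2 = 2,
\]
equivalently $\theta_1 = 2\alpha/(6-\alpha)$.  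This is proved by bounding $|u(hv) + u(-hv) - 2u(0)|$ both by $2h^\alpha [u]_{\Ckin^\alpha}$ (from the H\"older estimate) and by $h^2|D_v^2 u(0) v\cdot v|\pm Ch^{2+2\alpha/3}[D_v^2 u]_{\Ckin^{2\alpha/3}}$ (from Taylor), and then optimizing the step size $h\in(0,1)$.  Finally, inverting the transformation via $D_v^2 f(z_0) = r_1^{-2} S^{-1} D_v^2 f_{z_0}(0) S^{-1}$ multiplies by a factor $\lesssim t_0^{-1}\vvO^{(2+\gamma)_+ - \gamma}$, and the $t_0^{-1}$ combines with $t_0^{\alpha\theta_1/2} = t_0^{\alpha^2/(6-\alpha)}$ from the interpolation to produce the asserted time factor.

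The main technical obstacle is the bookkeeping required to identify the $\vvO$ exponent with the explicit $q(\gamma,\alpha,k,m)$.  The first summand $(2+\gamma)_+ - \gamma$ comes from $r_1^{-2}|S^{-1}|_{\mathrm{op}}^2$; the two cases inside the $\max$ correspond to the two terms on the right of the Schauder inequality, with the $-(k-m)/3$ in the first case arising once the $\vvO^{-k}$ decay of $\|f_{z_0}\|_{L^\infty}$ is combined with the weighted bound $\|\vv^m f\|_{L^\infty}\lesssim 1$ and the interpolation exponents; and the prefactor $(1-\alpha^2/(6-\alpha))$ collects the contributions of $\theta_2$ together with the change-of-norm factor from converting $\Ckin^{2\alpha/3}$ estimates back to the original weighted coordinates.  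After this arithmetic is carried out, the pointwise estimate holds at every $z_0 = (t,x,v)$ with $t\in[t_0/2,t_0]$, and thus yields the claimed $L^\infty$ bound on the stated time interval.
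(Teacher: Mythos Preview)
Your proposal is correct and follows essentially the same route as the paper: apply the Schauder estimate of \cite{henderson2017smoothing} in the $\mathcal S_{z_0}\circ\delta_{r_1}$ coordinates (with the strengthened lower-bound hypothesis giving $t_0$-independent ellipticity via the last clause of \Cref{l:lower-bounds}), then interpolate $|D_v^2 f_{z_0}(0)|$ between $[f_{z_0}]_{\Ckin^\alpha}$ and $[D_v^2 f_{z_0}]_{\Ckin^{2\alpha/3}}$ with exponent $\theta_1 = 2\alpha/(6-\alpha)$, and finally invert the transformation. The paper packages the interpolation step slightly differently, first passing through $[f_{z_0}]_{\Ckin^{2\alpha/3}}\lesssim \|f_{z_0}\|_{L^\infty}^{1/3}[f_{z_0}]_{\Ckin^\alpha}^{2/3}$ (\Cref{lem:holder_decay}) and then invoking the $C^\alpha$--$C^{2,\beta}$ interpolation of \Cref{lem:holder_interpolation}, but the resulting exponents and the $t_0^{-1+\alpha^2/(6-\alpha)}$ factor are identical to yours.

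One minor point: your multiplicative interpolation $|D_v^2 u(0)|\lesssim [u]^{\theta_1}[D_v^2 u]^{\theta_2}$ as stated omits the additive term $[u]_{\Ckin^\alpha}$ that arises when the optimal Taylor step $h$ hits the boundary $h=1$; the paper's \Cref{lem:holder_interpolation} keeps this term, but it is harmless here since it contributes $t_0^{-1+\alpha/2}$, which is dominated by $t_0^{-1+\alpha^2/(6-\alpha)}$. Your closing remark that the prefactor $(1-\alpha^2/(6-\alpha))$ in $q$ ``collects $\theta_2$ together with a change-of-norm factor'' is a bit loose: the proof actually produces $\theta_2 = 1-2\alpha/(6-\alpha)$ directly, and the discrepancy with the stated exponent in $q$ appears to be a typo in the lemma statement rather than an extra mechanism.
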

\begin{proof}
Let $z_0 \in (0,T]\times \R^6$ be fixed, and let $f_{z_0}(z) = f(\mathcal S_{z_0}(\delta_{r_1} z))$, with $r_1$ defined by~\eqref{e:r_1} and $\mathcal S_{z_0}$ defined by~\eqref{e:COV} as in the proof of Theorem \ref{t:exist}. The function $f_{z_0}$ satisfies \eqref{e:isotropic-nondivergence} in $Q_1$. By \Cref{l:lower-bounds}, \Cref{l:transformation_L_infinity}, and our extra assumption on $f_{\rm in}$, the diffusion matrix $\overline A(z)$, given by~\eqref{e:COV_coefficients}, satisfies upper and lower ellipticity estimates that are independent of $z_0$. In other words, $\lambda_{t_0}$ in \eqref{e:lambdaLambda} depends only on the initial data and $T$, not on $t_0$.

Our goal is to apply the Schauder estimate \cite[Theorem 2.12(a)]{henderson2017smoothing}.   
Using the bounds in \Cref{l:Holder_bar_A}, along with the fact that $[f_{z_0}]_{\Ckin^{2\alpha/3}(Q_1)}\leq [f_{z_0}]_{\Ckin^{\alpha}(Q_1)}$, we find
\begin{equation}\label{e:schauder-application}
\begin{split}
[D_v^2 & f_{z_0}]_{\Ckin^{2\alpha/3}(Q_{1/2})}
	\lesssim [\overline C f_{z_0}]_{\Ckin^{2\alpha/3}(Q_1)} + \|\overline A\|^{p(\alpha)}_{\Ckin^{2\alpha/3}(Q_1)}\|f_{z_0}\|_{L^\infty(Q_1)}\\
	&\lesssim [\overline C]_{\Ckin^{2\alpha/3}(Q_1)} \|f_{z_0}\|_{L^\infty(Q_1)}
		+ \|\overline C\|_{L^\infty(Q_1)} \|f_{z_0}\|_{\Ckin^{2\alpha/3}(Q_1)}\\
		&\qquad+ \vvO^{(2 + \alpha/3)p(\alpha)}\|\vv^m f\|^{p(\alpha)}_{\Ckin^\alpha} \vvO^{-k}\\
	&\lesssim \left(\vvO^{-(2+\gamma)_+ + \gamma + \alpha/3}  \vvO^{-k}
		+ \vvO^{-(2+\gamma)_+ + \gamma - k/3 - 2m/3}\right) \|\vv^m f\|_{\Ckin^\alpha}\\
		&\qquad+ \vvO^{(2 + \alpha/3)p(\alpha) - k}\|\vv^m f\|^{p(\alpha)}_{\Ckin^\alpha}\\
	&\lesssim \vvO^{-m + \tilde q(\alpha,\gamma,k,m)} \left(1 + \|\vv^m f\|_{\Ckin^\alpha}\right)^{p(\alpha)},
\end{split}
\end{equation}
where, as above, we use the shorthand $\Ckin^\alpha = \Ckin^\alpha([t_0/2,t_0]\times \R^6)$, and we have defined
\[
	\tilde q(\alpha, \gamma, k, m) = \max\left\{-(2+\gamma)_+ + \gamma - (k-m)/3, (2 + \alpha/3)p(\alpha) - (k - m)\right\}.
\]
The reduction to the maximum of these two values is due to the fact that $-(2+\gamma)_+ + \gamma + \alpha/3 - (k-m)\leq (2+\alpha/3)p(\alpha) - (k-m)$.  In the third line of \eqref{e:schauder-application}, we also used an interpolation between $\Ckin^\alpha$ and $L^{\infty,k}$ (see Lemma \ref{lem:holder_decay}) to write
\begin{equation}\label{e:interp1}
	[f_{z_0}]_{\Ckin^{2\alpha/3}(Q_1)}
		\lesssim \|f_{z_0}\|_{L^\infty(Q_1)}^{1/3} [f_{z_0}]_{\Ckin^\alpha(Q_1)}^{2/3}
		\lesssim \min\{1, t_0^{\alpha/3}\} \vvO^{-k/3 - 2m/3} [\vv^m f]_{\Ckin^\alpha}^{2/3},
\end{equation}
and used $t_0 \lesssim 1$.

Using \eqref{e:interp1} again, and an interpolation between $C_v^\alpha$ and $C_v^{2,\alpha}$ (see \Cref{lem:holder_interpolation}), we find
\[\begin{split}
	\|D^2_v &f_{z_0}\|_{L^\infty(Q_{1/4})}
		\lesssim [f_{z_0}]_{\Ckin^\alpha(Q_{1/2})} + [f_{z_0}]_{C_v^{2\alpha/3}(Q_{1/2})}^{\frac{2\alpha/3}{2 + (2\alpha/3)-\alpha}} [D^2_v f_{z_0}]_{C_v^\alpha(Q_{1/2})}^{1-\frac{2\alpha/3}{2 + (2\alpha/3)-\alpha}}\\
		&\lesssim \left(\min\{1, t_0^{\alpha/2}\} \vvO^{-m} + \min\{1, t_0^{\frac{\alpha}{2}\frac{2\alpha}{6 - \alpha}}\}\vvO^{-m\frac{2\alpha}{6 - \alpha}} \vvO^{\left(1-\frac{2\alpha}{6-\alpha}\right)(-m + \tilde q(\alpha,\gamma,k,m))}\right)\\
			&\qquad (1+ \|\vv^m f\|_{\Ckin^\alpha})^{p(\alpha)}\\
		&\lesssim \min\{1, t_0^\frac{\alpha^2}{6-\alpha}\} \vvO^{-m + \left(1-\frac{2\alpha}{6-\alpha}\right) \tilde q(\alpha,\gamma,k,m)} (1+\|\vv^m f\|_{\Ckin^\alpha})^{p(\alpha)}.
\end{split}\]
Undoing the change of variables, we get that
\[
	|D^2_v f(z_0)|
		\lesssim |r_1^{-2} S^{-2} D_v^2 f_{z_0}(0)|
		\lesssim \left(1 + t_0^{-1 + \frac{\alpha^2}{6-\alpha}}\right) \vvO^{((2+\gamma)_+ - \gamma) - m + \left(1-\frac{\alpha^2}{6-\alpha}\right)\tilde q(\alpha,\gamma,k,m)} \|\vv^m f\|_{\Ckin^\alpha}.
\]
Since $q(\alpha,\gamma,k,m) = ((2+\gamma)_+ - \gamma) + \left(1-\frac{\alpha^2}{6-\alpha}\right) \tilde q(\alpha,\gamma,k,m)$ and since $z_0$ was arbitrary, the claim is proved.
\end{proof}

%

The purpose of the next lemma is passing from H\"older regularity in $(x,v)$ to H\"older regularity in $(t,x,v)$.  This lemma is proven in Appendix \ref{s:reg}, in a more general form. Since the reverse implication is immediate, we see that bounding our solution $f$ in $L^\infty_t C^\alpha_{{\rm kin},x,v}$ is equivalent to bounding $f$ in $\Ckin^\alpha$, up to velocity weights.

\begin{lemma}\label{l:holder-vx-to-t}
For any locally H\"older continuous solution to \eqref{e:landau} with $f\in L^{\infty,k}([0,T]\times \R^6)$ and $k$ as in Theorem \ref{t:exist}, there holds
\begin{equation*}
	\|f\|_{\Ckin^{\alpha}(Q_1(z_0)\cap [0,T]\times \R^6)}
		\lesssim \langle v_0\rangle^{\alpha (1+\gamma/2)_+} \left(\|f\|_{L^\infty([0,T]\times \R^6)} + \|f\|_{L^\infty_t C^\alpha_{{\rm kin},x,v}(Q_2(z_0) \cap [0,T]\times \R^6)}\right),
\end{equation*}
for any $z_0 \in [0,T]\times \R^6$, 
where, for any $A\subset \R^6$,
\[
	[f]_{C^\alpha_{{\rm kin},x,v}(A)}
		= \sup_{(x_1,v_1),(x_2,v_2)\in A} \frac{|f(x_1,v_1) - f(x_2,v_2)|}{(|x_1-x_2|^{1/3} + |v_1+v_2|)^\alpha}.
\]
The implied constant depends only on $\|f\|_{L^{\infty,k}}$.
\end{lemma}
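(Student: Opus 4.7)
The plan is as follows. Given two points $z=(t,x,v),\ z'=(t',x',v')$ in $Q_1(z_0)\cap([0,T]\times\R^6)$ with (after relabeling) $t\le t'$, set $\tau:=t'-t$ and introduce the intermediate point $z^\sharp:=(t',\,x+\tau v,\,v)$, which lies on the characteristic through $z$. A direct calculation shows
\[
|(x+\tau v)-x'|^{1/3}+|v-v'|\le 2\rho(z,z'),
\]
so the fixed-time part of $|f(z)-f(z')|$ is controlled immediately by the hypothesis:
\[
|f(z^\sharp)-f(z')|\lesssim [f]_{L^\infty_tC^\alpha_{{\rm kin},x,v}(Q_2(z_0))}\,\rho(z,z')^\alpha.
\]
Everything therefore rests on bounding $|f(z^\sharp)-f(z)|$ by $\langle v_0\rangle^{\alpha(1+\gamma/2)_+}\rho(z,z')^\alpha$, i.e.\ showing that $s\mapsto f(s,x+(s-t)v,v)$ is $(\alpha/2)$-Hölder in $s$ with the correct velocity weight, since $\tau^{1/2}\le\rho(z,z')$.

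To extract this time regularity from the $(x,v)$-Hölder hypothesis, I would run a mollification/interpolation matched to the kinetic scaling. Let $\Psi^\delta$ be a smooth mollifier on $\R^6$ at anisotropic scales $\delta^3$ in $x$ and $\delta$ in $v$, and set $f^\delta:=\Psi^\delta\ast f$. By construction,
\[
\|f-f^\delta\|_{L^\infty}\lesssim \delta^\alpha\,[f]_{L^\infty_tC^\alpha_{{\rm kin},x,v}}.
\]
Mollifying the non-divergence form \eqref{e:landau_nd} yields
\[
(\partial_t+v\cdot\nabla_x)f^\delta = [\tr(\bar a^f D_v^2 f)]^\delta + [\bar c^f f]^\delta + [v\cdot\nabla_x,\,\Psi^\delta\ast]\,f.
\]
Two integrations by parts in $v$ in the first term, combined with the cancellation $\int D_v^2\Psi^\delta=0$ (used to replace $\bar a^f f$ by its Taylor remainder around $(x,v)$), give
\[
\bigl|[\tr(\bar a^f D_v^2 f)]^\delta\bigr|\lesssim \|\bar a^f\|_{L^\infty}\,\delta^{\alpha-2}\,[f]_{L^\infty_tC^\alpha_{{\rm kin},x,v}}\lesssim \langle v\rangle^{(\gamma+2)_+}\,\delta^{\alpha-2}
\]
by \Cref{l:abc}. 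An analogous manipulation—integrating by parts in $x$ and using $|v-w|\lesssim\delta$ on the support of $\Psi^\delta$—shows that the commutator is bounded by the same $O(\delta^{\alpha-2})$ quantity, while $[\bar c^ff]^\delta$ is absorbed. Integrating along the characteristic from $z$ to $z^\sharp$ and combining with the mollification error yields
\[
|f(z^\sharp)-f(z)|\lesssim \delta^\alpha + \tau\langle v_0\rangle^{(\gamma+2)_+}\delta^{\alpha-2},
\]
and balancing the two terms with $\delta^2=\tau\langle v_0\rangle^{(\gamma+2)_+}$ produces the required bound $\tau^{\alpha/2}\langle v_0\rangle^{\alpha(1+\gamma/2)_+}\lesssim \rho(z,z')^\alpha\langle v_0\rangle^{\alpha(1+\gamma/2)_+}$.

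The main obstacle is recovering the correct exponent $\delta^{\alpha-2}$ in the commutator $[v\cdot\nabla_x,\,\Psi^\delta\ast]\,f$; a naive pointwise bound gives only $\delta^{-2}$. The anisotropic kernel scaling $(\delta^3,\delta)$—the only choice compatible with the kinetic distance $\rho$ and the $C^\alpha_{{\rm kin},x,v}$ seminorm on the right-hand side—is precisely what enables the needed Taylor cancellation after integrating by parts in $x$. Equivalently, one could first pass to the rescaled function $f_{z_0}(z):=f(\mathcal S_{z_0}(\delta_{r_1}z))$ from \eqref{e:transformation_L_infinity}, run the entire argument on $Q_1$ where the transformed coefficients are uniformly bounded, and then recover the weight $\langle v_0\rangle^{\alpha(1+\gamma/2)_+}$ directly from the second identity in \eqref{e:rho_transformation}; both routes produce the same factor.
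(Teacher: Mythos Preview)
Your approach is genuinely different from the paper's. The paper proves this result (as Proposition~\ref{prop:holder_in_t}) by a barrier/maximum-principle argument: after the kinetic rescaling, it builds an explicit supersolution $\overline F(t)$ for $F=\psi(f_r-f(z_0))$ with a moving cutoff $\psi(t,x,v)=\phi(x-tv,v)$, using only the pointwise upper bounds $|\bar a^f|\lesssim\langle v\rangle^{(\gamma+2)_+}$ and $|\bar c^f|\lesssim 1$. No regularity of the coefficients enters, which is why the estimate is cleanly local and linear in the right-hand side.

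Your mollification-along-characteristics argument is a standard alternative for extracting time regularity from spatial regularity, and the commutator step is fine. The gap is in the diffusion term. After two integrations by parts in $w$, the derivatives do not land on $\Psi^\delta$ alone: you obtain $\int(\partial^2_{w_iw_j}\Psi^\delta)\,\bar a^f_{ij}\,f$ plus cross terms with $\partial_v\bar a^f$ and $\partial_v^2\bar a^f$. The cross terms are harmless for the Landau equation (they are controlled by $\bar b^f$- and $\bar c^f$-type quantities, hence by $\|f\|_{L^{\infty,k}}$), but the main term, after using the cancellation $\int\partial^2\Psi^\delta=0$, is bounded by $\delta^{-2}$ times the H\"older increment of the \emph{product} $\bar a^f f$, not of $f$ alone. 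The $v$-increment of $\bar a^f$ is again fine (via $\partial_v\bar a^f$), but the $x$-increment of $\bar a^f(t,x,v)$ is $\int|w|^{\gamma+2}(\cdots)\bigl[f(t,x-y,v-w)-f(t,x,v-w)\bigr]\,dw$, which requires the $x$-H\"older seminorm of $f(t,\cdot,v-w)$ for \emph{all} $w\in\R^3$, with enough $v$-decay to make the integral converge. That is a global hypothesis, strictly more than the localized $\|f\|_{L^\infty_tC^\alpha_{{\rm kin},x,v}(Q_2(z_0))}$ appearing on the right-hand side. So as written, your bound $\|\bar a^f\|_{L^\infty}\delta^{\alpha-2}[f]$ is not justified, and repairing it either makes the estimate nonlocal or introduces an extra factor of $[f]_{C^\alpha}$. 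The paper's barrier argument sidesteps this entirely, since the comparison principle only needs upper bounds on $a$ and $c$.
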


By combining \Cref{l:holder-vx-to-t} and \Cref{l:weighted-Schauder}, we deduce the following, whose proof is omitted.

%
%
%
%
%
\begin{lemma}\label{l:D2f-Linfty}
Let the assumptions of \Cref{l:weighted-Schauder} hold and additionally assume that $m > \max\{3, 5 + \gamma + \alpha/3\} + \alpha(1+\gamma/2)_+$.  Let
\[
	 q'(\alpha,\gamma,k,m) = q(\alpha, \gamma, k, m - \alpha(1+\gamma/2)_+) + \alpha(1+\gamma/2)_+.
\]
Then
\[
	\|\vv^{m-  q'(\gamma,\alpha,k,m)} D^2_v f\|_{L^\infty}
		\lesssim t_0^{-1 + \alpha^2/(6-\alpha)} ( 1 + \|\vv^m f\|_{L^\infty_tC^\alpha_{{\rm kin},x,v}([t_0/2,t_0]\times \R^6)})^{p(\alpha)}
\]
\end{lemma}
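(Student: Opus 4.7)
The strategy is a direct composition: use Lemma \ref{l:holder-vx-to-t} to upgrade the weaker $L^\infty_t C^\alpha_{{\rm kin},x,v}$ control to the full kinetic $\Ckin^\alpha$ control (at the price of an extra velocity factor $\vvO^{\alpha(1+\gamma/2)_+}$), and then feed this into Lemma \ref{l:weighted-Schauder}. The definition of $q'$ is precisely engineered so that the weight shift from the two steps accumulates correctly.

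In more detail, fix $z_0 \in (0,T]\times\R^6$ with $t_0 > 0$, and work on $Q_1(z_0) \cap ([t_0/2,t_0]\times\R^6)$. Set $\tilde m := m - \alpha(1+\gamma/2)_+$. The hypothesis $m > \max\{3, 5+\gamma+\alpha/3\} + \alpha(1+\gamma/2)_+$ guarantees that $\tilde m$ falls in the admissible range $(\max\{3, 5+\gamma+\alpha/3\}, k]$ needed to invoke Lemma \ref{l:weighted-Schauder} with parameter $\tilde m$. On the cylinder $Q_1(z_0)$, one has $\vv \approx \vvO$, so the product rule for H\"older seminorms together with Lemma \ref{l:holder-vx-to-t} gives
\[
\|\vv^{\tilde m} f\|_{\Ckin^\alpha(Q_1(z_0) \cap [t_0/2,t_0]\times\R^6)}
 \lesssim \vvO^{\tilde m}\, \|f\|_{\Ckin^\alpha(Q_1(z_0))}
 \lesssim \vvO^{\tilde m + \alpha(1+\gamma/2)_+} \bigl( \|f\|_{L^\infty} + \|f\|_{L^\infty_t C^\alpha_{{\rm kin},x,v}} \bigr),
\]
and the right-hand side is in turn $\lesssim \|\vv^m f\|_{L^\infty_t C^\alpha_{{\rm kin},x,v}([t_0/2,t_0]\times\R^6)}$ by choice of $\tilde m$ (absorbing $\|f\|_{L^\infty} \lesssim \|\vv^m f\|_{L^\infty}$ into the H\"older norm).

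Now apply Lemma \ref{l:weighted-Schauder} with $\tilde m$ in place of $m$. This yields, uniformly in $z_0$,
\[
\|\vv^{\tilde m - q(\gamma,\alpha,k,\tilde m)} D_v^2 f\|_{L^\infty([t_0/2,t_0]\times\R^6)}
 \lesssim t_0^{-1 + \alpha^2/(6-\alpha)} \bigl(1 + \|\vv^{\tilde m} f\|_{\Ckin^\alpha}\bigr)^{p(\alpha)}.
\]
Substituting the bound for $\|\vv^{\tilde m} f\|_{\Ckin^\alpha}$ obtained in the previous step on the right, and noting that by the definition of $q'$,
\[
\tilde m - q(\gamma,\alpha,k,\tilde m)
 = m - \alpha(1+\gamma/2)_+ - q\bigl(\gamma,\alpha,k,m-\alpha(1+\gamma/2)_+\bigr)
 = m - q'(\gamma,\alpha,k,m),
\]
gives the desired conclusion.

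The only real content is the bookkeeping of weights; there is no new analytic obstacle since Lemma \ref{l:holder-vx-to-t} already isolates the $\alpha(1+\gamma/2)_+$ loss and Lemma \ref{l:weighted-Schauder} is applied as a black box. The main point to verify carefully is that the admissibility range of $\tilde m$ in Lemma \ref{l:weighted-Schauder} is satisfied, which is exactly what the strengthened lower bound on $m$ in the hypothesis provides; the upper bound $\tilde m \leq k$ is automatic since $\alpha(1+\gamma/2)_+ \geq 0$.
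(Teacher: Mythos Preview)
Your proposal is correct and follows exactly the route the paper intends: the paper omits the proof but states explicitly that the lemma is obtained ``by combining \Cref{l:holder-vx-to-t} and \Cref{l:weighted-Schauder}.'' Your substitution $\tilde m = m - \alpha(1+\gamma/2)_+$, verification that $\tilde m$ lies in the admissible range for \Cref{l:weighted-Schauder}, and the weight bookkeeping showing $\tilde m - q(\gamma,\alpha,k,\tilde m) = m - q'(\gamma,\alpha,k,m)$ are precisely the omitted details.
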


Now we are ready to show that H\"older continuity at $t=0$ implies H\"older continuity for positive time. The proof requires us to work with the Euclidean H\"older norm $\|\cdot\|_{C^\alpha(\R^6)}$.  This norm is always only in $x,v$ variables.

\begin{proposition}\label{prop:holder_propagation}
	Let $f$ be the solution constructed in Theorem \ref{t:exist}.  Suppose that $\langle v\rangle^{m} f_{\rm in} \in C^\alpha(\R^6)$ and $f_{\rm in} \in L^{\infty,k}(\R^6)$ and that $\alpha$, $m$, and $k$ satisfy
	\[
		m > \max\{3,5 + \gamma + \alpha/3\} + \alpha(1+\gamma/2)_+
			\quad \text{ and } \quad
		q'(\alpha,\gamma,k,m)
			\leq (2+\gamma)_+.
	\]
	Then there exists $T_H \in (0,T]$ such that
	\[
		\|\langle v\rangle^{m} f\|_{L^\infty_t C^\alpha([0,T_H]\times \R^6)}
			\lesssim \|\langle v\rangle^{m} f_{\rm in}\|_{C^\alpha(\R^6)}.
	\]
	The implied constant above and $T_H$ depend only on $m$, $k$, $\alpha$, $\gamma$, $\|f\|_{L^{\infty, k}([0,T]\times \R^6)}$, $\delta$, $r$, and $R$.
\end{proposition}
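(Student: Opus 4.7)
\medskip
\noindent\textbf{Proof proposal.}  Following the sketch in Section \ref{s:ideas}, the plan is a maximum-principle argument for the auxiliary function
\[
g(t,x,v,\chi,\nu) := \frac{|f(t,x+\chi,v+\nu)-f(t,x,v)|^2}{(|\chi|^2+|\nu|^2)^\alpha}\,\vv^{2m},
\qquad (\chi,\nu)\in\overline{B_1}\times\overline{B_1},
\]
whose $L^\infty$-boundedness is essentially equivalent to the desired bound on $\|\vv^m f\|_{L^\infty_t C^\alpha}$. As in the proof of \Cref{t:exist}, we work with the smooth approximants $f^\e$ so all manipulations are classical and extract uniform-in-$\e$ bounds. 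Write $G=f_\star-f$ with $f_\star(t,x,v):=f(t,x+\chi,v+\nu)$, $\tilde a:=\bar a^f(t,x+\chi,v+\nu)$, $\tilde c:=\bar c^f(t,x+\chi,v+\nu)$, $D:=(|\chi|^2+|\nu|^2)^\alpha$, $h:=\vv^{2m}/D$, and $L:=\partial_t+v\cdot\nabla_x$. Subtracting the Landau equations at $(t,x+\chi,v+\nu)$ and $(t,x,v)$ yields
\[
LG=\tr(\bar a^f D_v^2 G)+\bar c^f G+\mathcal{S},\qquad
\mathcal{S}:=-\nu\cdot\nabla_x f_\star+\tr\bigl((\tilde a-\bar a^f)D_v^2 f_\star\bigr)+(\tilde c-\bar c^f)f_\star,
\]
and a direct computation (using that $h$ depends only on $v,\chi,\nu$) gives
\[
Lg-\tr(\bar a^f D_v^2 g)=-2h\nabla_v G\cdot\bar a^f\nabla_v G+2\bar c^f g+2hG\,\mathcal{S}-G^2\tr(\bar a^f D_v^2 h)-4G\,\nabla_v h\cdot\bar a^f\nabla_v G.
\]

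\medskip
Suppose $g$ attains an interior maximum at $(t_0,x_0,v_0,\chi_0,\nu_0)\in(0,T]\times\R^6\times B_1^2$ (the boundary case $|(\chi,\nu)|=1$ gives $g\le 4\|f\|_{L^{\infty,k}}^2\vv^{2m-2k}\lesssim 1$ since $m\le k$, and can be absorbed into the initial term). At the maximum, $\partial_t g\ge 0$, $v\cdot\nabla_x g=0$, $\tr(\bar a^f D_v^2 g)\le 0$, and
\[
\nabla_v G=-\tfrac{G}{2h}\nabla_v h,\qquad \nabla_x f_\star=\nabla_\chi G=\tfrac{\alpha G\,\chi}{|\chi|^2+|\nu|^2},\qquad \nabla_v f_\star=\tfrac{\alpha G\,\nu}{|\chi|^2+|\nu|^2}.
\]
Substituting the first identity, the dissipative term $-2h\nabla_v G\cdot\bar a^f\nabla_v G$ combines with $-4G\nabla_vh\cdot\bar a^f\nabla_v G$ to give a weight term of the form $cg\,\vv^{-2+(\gamma+2)_+}\lesssim g$, using \Cref{l:abc}; similarly $-G^2\tr(\bar a^f D_v^2 h)\lesssim g$. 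The transport source, via the second identity, produces $-2hG(\nu\cdot\nabla_x f_\star)=-2\alpha g\,(\nu\cdot\chi)/(|\chi|^2+|\nu|^2)$, which is bounded pointwise by $\alpha g$. Thus the structural terms are all controlled by $Cg$ with $C$ depending only on the allowable data.

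\medskip
The two remaining source contributions require the hypothesis $q'\le(2+\gamma)_+$. Writing $\tilde c-\bar c^f=c_\gamma\int|w|^\gamma[f(t,x+\chi,v+\nu-w)-f(t,x,v-w)]\,dw$ and bounding the integrand via the very definition of $g$, one obtains
\[
|\tilde c-\bar c^f|\lesssim \|g\|_{L^\infty}^{1/2}(|\chi|^2+|\nu|^2)^{\alpha/2}\,,\qquad |\tilde a-\bar a^f|\lesssim \|g\|_{L^\infty}^{1/2}(|\chi|^2+|\nu|^2)^{\alpha/2}\vv^{(\gamma+2)_+},
\]
with constants controlled by $\|f\|_{L^{\infty,k}}$ (this is where $m=\ell/2>\max\{3,5+\gamma+\alpha/3\}+\alpha(1+\gamma/2)_+$ is used, so the relevant $w$-integrals converge). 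Combining with $|f_\star|\lesssim\vv^{-k}$ and the weighted second-derivative bound from \Cref{l:D2f-Linfty}, namely $|D_v^2 f_\star|\lesssim t^{-1+\alpha^2/(6-\alpha)}(1+\|g\|_{L^\infty}^{1/2})^{p(\alpha)}\vv^{q'-m}$, the source contribution to $Lg$ at the maximum point reduces to
\[
2hG\,\mathcal{S}\lesssim \vv^{(\gamma+2)_+ +q' - (\gamma+2)_+}\,g+\vv^{(\gamma+2)_+ +q' - (\gamma+2)_+}\,t^{-1+\alpha^2/(6-\alpha)}(1+\sqrt{g})^{p(\alpha)}g,
\]
where at a maximum $\sqrt{g}\sqrt{\|g\|_{L^\infty}}=g$, and the hypothesis $q'\le(2+\gamma)_+$ prevents the remaining $v_0$-weight from blowing up.

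\medskip
Putting everything together, at the maximum point
\[
\partial_t g\le C g+C\,t^{-1+\alpha^2/(6-\alpha)}(1+g)^{(p(\alpha)+2)/2},
\]
so $y(t):=\sup_{x,v,\chi,\nu}g(t,\cdot)$ satisfies, in the viscosity sense, a Gronwall-type differential inequality with an \emph{integrable} singularity at $t=0$ (since $\alpha^2/(6-\alpha)>0$). A standard ODE comparison yields $y(t)\le 2y(0)$ on an interval $[0,T_H]$ whose length depends only on $\alpha,\gamma,k,m,\|f\|_{L^{\infty,k}},\delta,r,R$, and on $y(0)\lesssim\|\vv^m f_{\rm in}\|_{C^\alpha}^2$. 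Passing $\e\to 0$ and reinterpreting $\|g\|_{L^\infty}^{1/2}$ as the weighted seminorm $\|\vv^m f\|_{L^\infty_t C^\alpha}$ (cf.\ \Cref{lem:holder_decay}) yields the proposition.

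\medskip
\noindent\textbf{Main obstacle.} The heart of the proof is step three: correctly identifying the velocity weights arising from the two coefficient-difference sources, verifying that the singularity from $D_v^2 f_\star$ is time-integrable, and checking that the condition $q'(\alpha,\gamma,k,m)\le(2+\gamma)_+$ is precisely what is needed to prevent super-algebraic growth in $v_0$ at the maximum point. The restriction $(\chi,\nu)\in B_1$ and the necessity of working in the \emph{Euclidean} (rather than kinetic) Hölder metric are dictated by the identities $\nabla_\chi G=\nabla_x f_\star$ and $\nabla_\nu G=\nabla_v f_\star$, which convert the otherwise uncontrollable $-\nu\cdot\nabla_x f_\star$ term into a bounded multiple of $g$ via the max-principle substitution.
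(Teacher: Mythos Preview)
Your approach is correct and essentially identical to the paper's: the same auxiliary function $g$, the same maximum-principle/first-touching argument, the same use of the max-point identities to convert the transport remainder $-\nu\cdot\nabla_x f_\star$ into the bounded term $-2\alpha g\,(\nu\cdot\chi)/(|\chi|^2+|\nu|^2)$, the same coefficient-difference estimates for $\tilde a-\bar a^f$ and $\tilde c-\bar c^f$, and the same invocation of the time-integrable $D_v^2 f$ bound via \Cref{l:D2f-Linfty}.

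Two points deserve tightening. First, the existence of an interior maximizer is not automatic since $g$ does not decay in $x$; the paper handles this by translating in $x$ along a maximizing sequence and passing to a limit via Arzel\`a--Ascoli (using the smoothness and Gaussian decay of the approximants $f^\eps$). Second, your closure via ``$y(t)$ satisfies a Gr\"onwall inequality in the viscosity sense'' hides a subtlety: the bound from \Cref{l:D2f-Linfty} controls $D_v^2 f(t_0,\cdot)$ in terms of $\|\vv^m f\|_{L^\infty_t C^\alpha}$ on the \emph{history} $[t_0/2,t_0]$, not just at time $t_0$. The paper resolves this by comparing $g$ against an explicit \emph{increasing} supersolution $\overline G$ solving the ODE $\overline G'=N t^{-1+\alpha^2/(6-\alpha)}(1+\overline G)^{(p(\alpha)+1)/2}$; monotonicity of $\overline G$ guarantees that $\sup_{[0,t_0]}\|g\|$ equals $g$ at the first touching time, which is what lets you replace $\|g\|_{L^\infty}^{1/2}$ by $\sqrt g$ throughout. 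Your weight bookkeeping in the displayed bound for $2hG\,\mathcal S$ is also slightly off (the net exponent is $(2+\gamma)_+ + q'$ rather than $q'$), but this does not affect the strategy.
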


\begin{proof}

Let $f^\eps$ be the regularizing approximation from the proof of Theorem \ref{t:exist}. We show that the conclusion of the proposition holds for $f^\eps$, with $T_H$ independent of $\eps$, so the same conclusion for $f$ follows. The smoothness and decay of $f^\eps$ is used to obtain a first touching point with a supersolution, and to ensure the right-hand side of Lemma \ref{l:weighted-Schauder} is 
finite, but none of the estimates depend quantitatively on $\eps$. To keep the notation clean, we denote $f = f^\eps$ for this proof.

\medskip

\noindent {\it Step 1: Defining $g$ and deriving its equation.} As discussed in the introduction, for $(t,x,v,\chi,\nu) \in \R_+ \times \R^6 \times B_1(0)^2$ and $m > 0$, we define
\begin{equation}
\begin{split}
&\tau f(t,x,v,\chi,\nu) := f(t,x+\chi, v+\nu),\\
&\delta f (t,x,v,\chi,\nu) = \tau f(t,x,v,\chi,\nu) - f(t,x,v),\\
&g(t,x,v,\chi,\nu) = \frac{|\delta f (t,x,v,\chi,\nu)|^2}{(|\chi|^{2} + |\nu|^2)^{\alpha}} \vv^{2m}.
\end{split}
\end{equation}
Note that $g$ encodes the Euclidean H\"older norm of $f$, as stated in the following elementary lemma.

\begin{lemma}\label{lem:holder_char}
	Let $m\geq 0$.  Fix any $f : \R^6 \to \R$ and let $g: \R^6 \times B_1(0)^2 \to \R$ be defined by $g(x,v,\chi,\nu) = |\delta f(x,v,\chi,\nu)|^2 \langle v \rangle^{2m} / (|\chi|^2+|\nu|^2)^{\alpha}$.  There holds
	\[
		\|g\|_{L^\infty(\R^6\times B_1(0)^2)} + \|\vv^m f\|_{L^\infty(\R^6)}^2 \approx \|\langle v \rangle^m f\|_{C^\alpha(\R^6)}^2 \approx \sup_{x,v} \vv^{2m} \|f\|_{C^\alpha(B_1(x,v))}^2,
	\]
	where the implied constants depend only on $m$ and $\alpha$.
\end{lemma}
Hence, deriving a time-dependent upper bound on $g$ suffices to prove the proposition. 
We obtain a bound on $g$ by showing that it satisfies an equation where all terms are either bounded nicely or respect a maximum principle.  By a short computation, $g$ satisfies
\begin{equation}\label{e:Holder}
\begin{split}
& \partial_t g + v\cdot \nabla_x g + \nu \cdot \nabla_{\chi} g
+\frac{2\alpha \nu \cdot \chi}{|\chi|^2 + |\nu|^2} g\\
	&\qquad\qquad= 2\frac{\left( \tr(\bar{a}^{\delta f} D^2_v \tau f + \bar a^f D^2_v \delta f)
		+ \bar c^{\delta f} \tau f + \bar c^f \delta f \right) \delta f}{(|\chi|^2 + |\nu|^2)^{\alpha}} \vv^m.
\end{split}
\end{equation}
The first three terms on the left enjoy a maximum principle.  The last term on the left is clearly bounded by $g$.  The terms on the right can be shown to be bounded, which is the main thrust of the argument below.


Before continuing with the proof, we discuss why the Euclidean H\"older norm occurs naturally here.  In the definition of $g$, one might expect to see a denominator of $(|\chi|^2 + |\nu|^6)^{\alpha/3}$ or another term with this balance of powers that respects the kinetic scaling that is natural to the equation.  However, this would replace the last term on the left hand side of~\eqref{e:Holder} by
\[
	\frac{2\alpha}{3} \frac{\nu \cdot \chi}{|\chi|^2 + |\nu|^6} g
\]
which is not bounded. This forces the choice of $(|\chi|^2 + |\nu|^2)^\alpha$ for the denominator of $g$, which is why this proposition is stated in terms of the Euclidean H\"older norm.


To find a time-dependent upper bound of $g$ that remains finite for some time interval, we use~\eqref{e:Holder} and construct a super-solution of $g$.

\medskip

\noindent {\it Step 2: A super-solution and the maximum principle argument.}   
With $N > 0$ to be chosen later, define $\overline G$ to be the unique solution to
\begin{equation}\label{e:g_supersoln}
\begin{cases}
	\frac{d}{dt} \overline G(t) = N t^{-1 + \frac{\alpha^2}{6-\alpha}}\left(1 + \overline G(t)\right)^{\frac{p(\alpha)+1}{2}},\\
	\overline G(0) = 1 + \|g(0,\cdot)\|_{L^\infty (\R^6 \times B_1(0)^2)} + N \|f\|^2_{L^{\infty,m}([0,T]\times \R^6)}.
\end{cases}
\end{equation}
This solution $\overline G$ exists on a maximal time interval $[0,T_G)$ with $T_G$ depending on $N$, $\beta$, $\alpha$, $\|g(0,\cdot)\|_{L^\infty (\R^6 \times B_1(0)^2)}$, and $\|f\|_{L^\infty([0,T]\times \R^6)}$.  Our goal is to show that $g(t,x,v,\chi,\nu) < \overline G(t)$ for all $t\in [0,\min\{T,T_G\})$. By \Cref{lem:holder_char}, this implies the existence of $T_H$ as in the statement of the proposition. Let $t_0$ be the first time that $\|g\|_{L^\infty([0,t_0]\times \R^6 \times B_1(0)^2)} = \overline G(t_0)$. It is clear, by construction, that $t_0 >0$. We seek a contradiction at $t=t_0$.

First, we claim that we may assume there exists $(x_0, v_0, \chi_0, \nu_0) \in \R^6 \times \overline{B_1}(0)^2$ such that $g(t_0,x_0, v_0, \chi_0, \nu_0) = \overline G(t_0)$.  If there is no such point, then fix any sequence $z_n \in \R^6\times B_1(0)^2$ such that $g(t_0,z_n) \to \overline G(t_0)$. Recall that $f$ (which is actually the regularization $f^\eps$) is $C^\infty$ and satisfies pointwise Gaussian decay in $v$. Because of this Gaussian decay, we can take $z_n \in  \R^3 \times B_R(0)\times B_1(0)^2$ for some $R>0$.  Since $g$ does not decay as $|x|\to \infty$, we need to re-center as follows: let $g_n(t,x,v,\chi,\nu) = g(t, x + x_n, v, \chi, \nu)$.  Up to passing to a subsequence, it is clear that there exists $\bar g$ and $(v_0, \chi_0, \nu_0) \in \overline{B_R}(0) \times \overline{B_1}(0)^2$ such that $g_n \to \bar g$ locally uniformly and $\bar g(t_0, 0, v_0,\chi_0, \nu_0) = \overline G(t_0)$.  Further, $\|\bar g\|_{L^\infty([0,t]\times \R^6 \times B_1(0)^2)} < \overline G(t)$ for all $t< t_0$. The smoothness of $g$ implies that, again up to passing to a subsequence, $g_n \to \bar g$ in $C^{\ell}_{loc}$ for any $\ell\in\N$, so that $\bar g$ satisfies the same equation as $g$, i.e. \eqref{e:Holder}. The proof may then proceed using $\bar g$ in the place of $g$.  Hence, we may assume the existence of $(x_0,v_0,\chi_0,\nu_0)$.

Next, we notice that $(\chi_0,\nu_0) \in B_1(0)^2$.  Indeed, if $\chi_0$ or $\nu_0$ were in $\partial B_1(0)$, then
\[\begin{split}
	g(t_0,x_0, v_0, \chi_0, \nu_0)
		&\leq \frac{|\delta f(t_0, x_0, v_0, \chi_0, \nu_0)|^2}{1} \langle v_0\rangle^{2m}\\
		&\lesssim f(t_0, x_0 + \chi_0, v_0 + \nu_0)^2 \langle v_0 \rangle^{2m} + f(t_0, x_0, v_0)^2\langle v_0\rangle^{2m}
		\lesssim \|f\|^2_{L^{\infty,m}}.
\end{split}\]
Then, choosing $N$ sufficiently large, we find $g(t_0, x_0, v_0, \chi_0,\nu_0) \leq N \|f\|^2_{L^{\infty,m}}$.  On the other hand, by~\eqref{e:g_supersoln}, $\overline G$ is increasing.  Hence, we have $\overline G(t_0) \geq \overline G(0) > N\|f\|^2_{L^{\infty,m}}$, which contradicts the fact that $\overline G(t_0) = g(t_0, x_0, v_0, \chi_0, \nu_0)$.  Thus, we conclude $(\chi_0, \nu_0) \in B_1(0)^2$.

In order to conclude the proof, we establish that, at $(t_0, x_0, v_0, \chi_0, \nu_0)$,
\begin{equation}\label{e:g_comparison}
	\partial_t g
		< \frac{N}{t^{1 - \frac{\alpha^2}{6-\alpha}}}\left(1 + g\right)^{\frac{p(\alpha)+1}{2}}
\end{equation}
as long as $N$ is chosen sufficiently large, depending on $\alpha$, $m$, $k$, and the constant in \Cref{l:weighted-Schauder}. Since, at this location, there holds $g= \overline G$ and $\partial_t g  \geq (d/dt) \overline G$, this yields a contradiction in view of~\eqref{e:g_supersoln}.

\medskip

\noindent {\it Step 3: Re-writing~\eqref{e:Holder} at $(t_0,x_0,v_0,\chi_0,\nu_0)$.} 
In order to prove that~\eqref{e:g_comparison} holds, we return to~\eqref{e:Holder} and examine the right hand side.  Notice that 
\begin{equation}\label{e:prop_c1}
\begin{split}
	2&\frac{\tr(\bar{a}^{\delta f} D^2_v \tau f + \bar a^f D^2_v \delta f)}{(|\chi|^2 + |\nu|^2)^{\alpha}} \delta f \vv^{2m} \\
		&\qquad = \tr(\bar a^f D^2_v g) +  2\frac{\tr(\bar a^{\delta f} D^2_v(\tau f)) }{(|\chi|^2 + |\nu|^2)^{\alpha}}\delta f\vv^{2m}
				- 2 \nabla_v(\delta f)\cdot( \bar a^f \nabla_v (\delta f)) \frac{\vv^{2m}}{(|\chi|^2 + |\nu|^2)^{\alpha}}\\
		&\quad\qquad - \frac{2m \vv^{2m-2}}{(|\chi|^2 + |\nu|^2)^{\alpha}}\left( 4 v \cdot(\bar a^f \nabla_v(\delta f)) \delta f + \tr(\bar a^f) |\delta f|^2 + \frac{(2m-2) v\cdot (\bar a^f v)}{\vv^2} |\delta f|^2 \right).
%
\end{split}
\end{equation}
On the other hand $(x_0,v_0,\chi_0,\nu_0)$ is the location of a global maximum of $g$.  Hence, at $(x_0,v_0,\chi_0,\nu_0)$, we obtain the identity
\[
	0 = \nabla_v g
		= \langle v_0\rangle^{2m} \frac{2 \delta f \nabla_v(\delta f)}{(|\chi_0|^2 + |\nu_0|^2)^{\alpha}}
			+ 2m v_0 \langle v_0\rangle^{2m-2} \frac{|\delta f|^2}{(|\chi_0|^2 + |\nu_0|^2)^{\alpha}},
\]
or $\nabla_v(\delta f) = -m \langle v_0\rangle^{-2} \delta f v_0$. Then ~\eqref{e:prop_c1} becomes, at $(x_0,v_0,\chi_0,\nu_0)$,
\[
\begin{split}
	&2\frac{\tr(\bar{a}^{\delta f} D^2_v \tau f + \bar a^f D^2_v \delta f)}{(|\chi_0|^2 + |\nu_0|^2)^{\alpha}}\delta f \langle v_0\rangle^{2m}\\
		&\qquad = \tr(\bar a^f D^2_v g) +  2\frac{\tr(\bar a^{\delta f} D^2_v(\tau f))}{(|\chi_0|^2 + |\nu_0|^2)^{\alpha}} \delta f\langle v_0\rangle^{2m}
			+ \frac{2m g}{ \langle v_0\rangle^4} \left( \left(m +2\right) v_0 \cdot(\bar a^f v_0) - \langle v_0 \rangle^2 \tr(\bar a^f)\right).
\end{split}
\]
Using again that $(x_0,v_0,\chi_0,\nu_0)$ is the location of a maximum, we have $\bar a^f D_v^2 g \leq 0$ and $\nabla_x g = \nabla_\chi g = 0$.  Therefore, from \eqref{e:Holder} we obtain, at $(x_0,v_0,\chi_0,\nu_0)$,
\begin{equation}\label{e:holder_c3}
\begin{split}
	\partial_t g
		&\leq -2\alpha \frac{\nu_0 \cdot \chi_0}{|\chi_0|^2 + |\nu_0|^2} g
			+  2\frac{\tr(\bar a^{\delta f} D^2_v(\tau f) )}{(|\chi_0|^2 + |\nu_0|^2)^{\alpha}}\delta f \langle v_0\rangle^{2m}
			+ \frac{2m g}{ \langle v_0\rangle^4} \left( \left(m +2\right) v_0 \cdot (\bar a^f v_0) - \langle v_0 \rangle^2 \tr(\bar a^f)\right)\\
		&\quad  + 2\frac{ \bar c^{\delta f} \tau f \delta f}{(|\chi_0|^2 + |\nu_0|^2)^{\alpha}} \langle v_0\rangle^{2m} + 2\bar c^f  g\\
		&=: J_1 + J_2 + J_3 + J_4 + J_5.
\end{split}
\end{equation}
It is clear that $J_1, J_3, J_5\lesssim g$ (see Lemma \ref{l:abc} for $J_3$ and $J_5$). We now bound the remaining two terms.

\medskip

\noindent {\it Step 4: Bounding $J_2$ and $J_4$.}  
Re-writing $J_4$ and taking the absolute value, we find
\[
	|J_4|
		\leq \frac{2 |\bar c^{\delta f}| |\tau f| \sqrt g}{(|\chi_0|^2 + |\nu_0|^2)^{\alpha/2}} \langle v_0 \rangle^{m}.
\]
It is clear that we must bound the H\"older modulus of $\bar c^{\delta f}$. If $\gamma \in (-3,0)$, then using Lemma \ref{lem:holder_char}, we have
\begin{equation*}
\begin{split}
 \frac{\bar c^{\delta f}}{(|\chi_0|^2+|\nu_0|^2)^{\alpha/2}}
 	&= c_\gamma \int_{\R^3} |w|^\gamma \frac{\delta f (t_0, x_0, v_0-w, \chi_0,\nu_0)}{(|\chi_0|^2+|\nu_0|^2)^{\alpha/2}} \dd w\\
&\leq c_\gamma \int_{\R^3} \frac{|w|^\gamma}{ \langle v_0-w \rangle^{m}} \|g(t_0,\cdot)\|_{L^\infty(\R^6\times B_1(0)^2)}^{\frac 1 2} \dd w
\leq C \|g(t_0,\cdot)\|_{L^\infty(\R^6\times B_1(0)^2)}^{\frac 1 2},
\end{split}
\end{equation*}
because $ m  > 3 + \gamma$. On the other hand, if $\gamma = -3$, we have (up to a constant) $\bar c^{\delta f} = \delta f$, and Lemma \ref{lem:holder_char} directly implies the same upper bound for $\bar c^{\delta f} (|\chi_0|^2 + |\nu_0|^2)^{-\alpha/2}$. In addition, it is clear that
$|\tau f |\vv^{m} \lesssim \| f \|_{L^{\infty,k}}$ because $k \geq m$.  By construction, we have $\|g(t_0,\cdot)\|_{L^\infty(\R^6\times B_1(0)^2)} = g(t_0,x_0,v_0,\chi_0,\nu_0)$.  Thus, at $(t_0,x_0,v_0,\chi_0,\nu_0)$, we have
\begin{equation*}
 J_4 \leq C g.
\end{equation*}

%
%
%
For $J_2$, we begin with a similar approach; observe that
\begin{equation}
 \begin{split}
	\frac{\bar{a}^{\delta f}}{(|\chi_0|^2+|\nu_0|^2)^{\alpha/2}}
		&\leq a_\gamma \int_{\R^3} |w|^{2+\gamma} \frac{\delta f (t_0, x_0, v_0-w, \chi_0,\nu_0)}{(|\chi_0|^2+|\nu_0|^2)^{\alpha/2}} \dd w\\
		&\leq a_\gamma \int_{\R^3} \frac{|w|^{2+\gamma}}{\langle v_0-w \rangle^{m}} \| g(t_0,\cdot) \|_{L^\infty(\R^6\times B_1(0)^2)}^{\frac 1 2} \dd w\\
		&\leq C   \| g(t_0,\cdot) \|_{L^\infty(\R^6\times B_1(0)^2)}^{1 / 2} \langle v_0\rangle^{(2+\gamma)_+}.
 \end{split}
\end{equation}
This holds since $m > 5 + \gamma$ by assumption.
Thus
\begin{equation}\label{e:J2bound}
	J_2
		\lesssim |D^2_v (\tau f)| \langle v_0 \rangle^{m + (2+\gamma)_+} \| g(t_0,\cdot) \|_{L^\infty(\R^6\times B_1(0)^2)}^{1/2}.
\end{equation}
To close the estimate, we require an upper bound on
$\| \vv^{m + (2+\gamma)_+} D^2_v f \|_{L^\infty}$, which is provided by \Cref{l:weighted-Schauder}; however, the results in \Cref{l:weighted-Schauder} require working with the kinetic H\"older norms.  In this case, it suffices to notice that
\[
	\|\vv^m f\|_{C_{{\rm kin},x,v}^\alpha([t_0/2,t_0]\times\R^6)}
		\lesssim \|\vv^m f \|_{C^\alpha([t_0/2,t_0]\times\R^6)}.
\]
This inequality follows from the easy-to-establish fact that $|z-z'|\lesssim \rho(z,z')$ when $t=t'$ and $|z-z'|\leq 1$.  Putting this together with \Cref{l:weighted-Schauder} and using the fact that $q(\alpha,\gamma,k,m) \leq - (2+\gamma)_+$, we have
\[
	\| \vv^{m + (2+\gamma)_+} D^2_v f \|_{L^\infty}
		\leq \| \vv^{m - q(\alpha,\gamma,k,m)} D^2_v f \|_{L^\infty}
		\lesssim \frac{1}{t_0^{1-\frac{\alpha^2}{6-\alpha}}} (1 + \|\vv^m f\|_{C^\alpha([t_0/2,t_0]\times \R^6)})^{p(\alpha)}.
\]
Thus,
\[
	J_2
		\lesssim \frac{1}{t_0^{1- \frac{\alpha^2}{6-\alpha}}}
			\left(\|\vv^m f\|_{L^\infty_t C^\alpha_{x,v}([t_0/2,t_0]\times\R^6)}^{p(\alpha)} +1\right)
			\|g(t_0,\cdot)\|_{L^\infty(\R^6\times B_1(0)^2)}^{1/2},
\]
Then, using \Cref{lem:holder_char} and the fact that $\|g(t_0,\cdot)\|_{L^\infty(\R^6\times B_1(0)^2)} = \|g\|_{L^\infty([0,t_0]\times \R^6\times B_1(0)^2)}$, since $\overline G$ is increasing, we find 
\[
	J_2
		\lesssim \frac{1}{t_0^{1- \frac{\alpha^2}{6-\alpha}}} (1 + g)^{\frac{p(\alpha)+1}{2}},
\]
which concludes the bound of $J_2$.

\medskip

\noindent {\it Step 5: Establishing~\eqref{e:g_comparison} and concluding the proof.} 
Putting together the bounds on $J_1, \dots, J_5$ with~\eqref{e:holder_c3}, we find, at $(t_0,x_0,v_0,\chi_0,\nu_0)$,
\begin{equation}\label{e:dtg}
	\partial_t g
		\lesssim g + \frac{1}{t_0^{1 - \frac{\alpha^2}{6-\alpha}}}\left(1 + g\right)^{\frac{p(\alpha)+1}{2}}
		\lesssim \frac{1}{t_0^{1 - \frac{\alpha^2}{6-\alpha}}}\left(1 + g\right)^{\frac{p(\alpha)+1}{2}}.
\end{equation}
The second inequality follows by using Young's inequality to show that $g \lesssim 1 + g^{P(\beta)}$.  Choosing $N$ to be larger than the implied constant in \eqref{e:dtg}, we obtain~\eqref{e:g_comparison}.  This yields a contradiction, as outlined above.  Thus, our proof is concluded.
\end{proof}

Finally, combining Lemma \ref{l:D2f-Linfty}, Lemma \ref{l:holder-vx-to-t}, and Proposition \ref{prop:holder_propagation}, we obtain a time-integrable bound on $D_v^2 f$ for our solution:
\begin{lemma}\label{l:time-int}
	Let the assumptions of \Cref{prop:holder_propagation} hold, and let $f$ be the solution constructed in Theorem \ref{t:exist} corresponding to $f_{\rm in}$. Then for $t\in [0,T_H]$, 
\[
	\|\vv^{m+(\gamma+2)_+}D_v^2 f(t,\cdot,\cdot)\|_{L^\infty(\R^6)}
		\lesssim \frac{1}{t^{1 - \frac{\alpha^2}{6-\alpha}}}.
\]
The implied constant depends on $\gamma$, $k$, $m$, $\alpha$, $\|f_{\rm in}\|_{L^{\infty,k}}$, and $\|\vv^m f_{\rm in}\|_{C^\alpha}$.
\end{lemma}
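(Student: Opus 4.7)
The proof is a composition of Proposition~\ref{prop:holder_propagation} and Lemma~\ref{l:D2f-Linfty}; the main analytic work having been done in those two results, the argument here amounts to bookkeeping.

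First I would apply Proposition~\ref{prop:holder_propagation}, whose hypotheses are precisely the standing assumptions of Lemma~\ref{l:time-int}. This supplies a uniform-in-time weighted Euclidean H\"older bound
\[
\|\vv^m f\|_{L^\infty_t C^\alpha([0,T_H]\times\R^6)} \lesssim \|\vv^m f_{\rm in}\|_{C^\alpha(\R^6)},
\]
with $T_H$ and the implied constant depending only on the parameters permitted in the statement.

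Next I would convert this into a bound in the seminorm $C^\alpha_{{\rm kin},x,v}$ required as input to Lemma~\ref{l:D2f-Linfty}. Since $|x_1-x_2|^{1/3}\ge |x_1-x_2|$ whenever $|x_1-x_2|\le 1$, the kinetic H\"older quotient on balls of unit kinetic diameter is pointwise dominated by the Euclidean one; combined with the $L^{\infty,m}$ bound on $f$, this gives control of $\|\vv^m f\|_{L^\infty_\tau C^\alpha_{{\rm kin},x,v}([t/2,t]\times\R^6)}$ for every $t\in(0,T_H]$ by the same right-hand side. Invoking Lemma~\ref{l:D2f-Linfty} with $t_0=t$ then yields
\[
\|\vv^{m-q'(\gamma,\alpha,k,m)} D_v^2 f(t,\cdot,\cdot)\|_{L^\infty(\R^6)}
\lesssim t^{-1+\alpha^2/(6-\alpha)}\bigl(1+\|\vv^m f_{\rm in}\|_{C^\alpha}\bigr)^{p(\alpha)},
\]
which already has the claimed time singularity.

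To finish, I would observe that the sign condition on $q'$ inherited from Proposition~\ref{prop:holder_propagation} (together with the definition of $q'$ in Lemma~\ref{l:D2f-Linfty}) gives $-q'(\gamma,\alpha,k,m)\ge (\gamma+2)_+$, so $\vv^{m+(\gamma+2)_+}\le \vv^{m-q'}$ and the weight in the display above upgrades to the one in the statement; the factor $(1+\|\vv^m f_{\rm in}\|_{C^\alpha})^{p(\alpha)}$ is absorbed into the implied constant, which is allowed since the conclusion of Lemma~\ref{l:time-int} explicitly permits dependence on $\|\vv^m f_{\rm in}\|_{C^\alpha}$. No step presents a genuine obstacle, because the hard work---propagating H\"older regularity from the initial data and running the weighted Schauder estimate---has already been discharged in Proposition~\ref{prop:holder_propagation} and Lemmas~\ref{l:weighted-Schauder}--\ref{l:D2f-Linfty}; the only mild subtlety is the elementary comparison between the Euclidean and kinetic H\"older seminorms on unit-sized sets.
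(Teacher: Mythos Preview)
Your proposal is correct and follows exactly the route the paper indicates: combine the H\"older propagation of Proposition~\ref{prop:holder_propagation} with the weighted second-derivative estimate of Lemma~\ref{l:D2f-Linfty} (which already absorbs Lemma~\ref{l:holder-vx-to-t}), then compare weights via the standing hypothesis on $q'$. The paper gives no proof beyond naming these three ingredients, so your bookkeeping is precisely what is required.
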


\section{Uniqueness}\label{s:unique}



We are now ready to prove our solutions are unique.  In this section, we allow all implied constants in the $\lesssim$ notation to additionally depend on $\|f\|_{L^{\infty,k}}$ and $\|g\|_{L^{\infty,5+\gamma+\eta}}$, where $k$, $g$, and $\eta$ are given below.

In order to state our result, we clarify the type of weak solution we work with. To use the results of \cite{golse2016,henderson2017smoothing}, we require $g$ to be in the kinetic Sobolev space required in these works; namely, for $\Omega \subset [0,\infty)\times\R^6)$, let
\[
	H^1_{\rm kin}(\Omega) = \{ \phi \in L^2(\Omega) : \nabla_v \phi \in L^2(\Omega), (\partial_t + v\cdot\nabla_x) \phi \in L^2_{t,x}H^{-1}_v(\Omega)\},
\]
and let $H^1_{\rm kin,loc}$ be defined in the obvious way. By a weak solution of \eqref{e:landau}, we mean a solution in the sense of integration against test functions in $H_{\rm kin}^1([0,T_0]\times \R^6)$ with compact support.

First, we show uniformly continuous weak solutions have pointwise regularity:

\begin{lemma}\label{l:gC2}
Suppose that $g \in H^1_{\rm kin,loc}((0,T_0]\times \R^6) \cap L^{\infty,5+\eta}([0,T_0]\times \R^6)$, with $\eta>0$, solves~\eqref{e:landau} weakly and is uniformly continuous on $[0,T_0]\times \R^6$ and $g(0,\cdot,\cdot) = f_{\rm in}$. Then $g \in C^2_{\rm kin, loc}((0,T_0]\times\R^6)$.  
\end{lemma}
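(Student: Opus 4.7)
The plan is to treat $g$ as a weak solution of a \emph{linear} kinetic Fokker-Planck equation whose coefficients are constructed from $g$ itself, and then apply the linear De Giorgi and Schauder theories exactly as in the existence proof of Theorem~\ref{t:exist}(i). The three ingredients needed are $L^\infty$ bounds on the coefficients, anisotropic lower ellipticity of $\bar a^g$, and a pointwise lower bound for $g$ to supply that ellipticity.

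First, since $g \in L^{\infty,5+\eta}$ and $5+\eta > \gamma + 5$ (because $\gamma < 0$), Lemma~\ref{l:abc} gives $|\bar a^g| \lesssim \vv^{(\gamma+2)_+}$, $|\bar b^g| \lesssim \vv^{(\gamma+1)_+}$, and $\bar c^g \in L^\infty$ on $[0,T_0]\times\R^6$. Next, the uniform continuity of $g$ on $[0,T_0]\times\R^6$ combined with the pointwise positivity of $f_{\rm in} = g(0,\cdot,\cdot)$ from the setting of Theorem~\ref{t:unique} (for each $x\in\R^3$ there is $v_x\in B_R(0)$ with $f_{\rm in}\geq \delta\1_{B_r(x,v_x)}$) produces a short time $\tau > 0$ on which $g(t,\cdot,\cdot) \geq \tfrac{\delta}{2}\1_{B_{r/2}(x,v_x)}$ for every $x\in\R^3$. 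Feeding this initial positivity and the upper bound on $\bar a^g$ into the mass-spreading mechanism behind Lemma~\ref{l:lower-bounds} (which, as remarked after the statement of that lemma, only requires the bound $|\bar a^g|\lesssim \vv^{(\gamma+2)_+}$ to operate) yields a Gaussian pointwise lower bound for $g$, and hence the anisotropic lower ellipticity~\eqref{e:lambda} for $\bar a^g$ on $[\tau',T_0]\times\R^6$ for every $\tau'>0$.

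With upper and lower ellipticity of $\bar a^g$ in hand, I fix $z_0\in(0,T_0]\times\R^6$ and apply the change of variables $\mathcal S_{z_0}\circ\delta_{r_1}$ from~\eqref{e:COV}. By Lemma~\ref{l:transformation_L_infinity}, the rescaled function $g_{z_0}$ satisfies the divergence-form equation~\eqref{e:isotropic} in $Q_1$ with bounded coefficients and uniform ellipticity constants (possibly depending on $t_0$, which is permitted since we only claim regularity away from $t=0$). The De Giorgi-type estimate~\cite[Theorem 3]{golse2016} then gives $g_{z_0}\in \Ckin^\alpha(Q_{1/2})$ for some $\alpha\in(0,1)$. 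This Hölder regularity transfers to the rescaled coefficients via Lemma~\ref{l:Holder_bar_A}, and the kinetic Schauder estimate~\cite[Theorem 2.9]{henderson2017smoothing} applied to the non-divergence form equation~\eqref{e:isotropic-nondivergence} upgrades to $g_{z_0}\in \Ckin^{2,\alpha'}(Q_{1/4})$. Undoing the change of variables and ranging $z_0$ over $(0,T_0]\times\R^6$ yields $g\in C^2_{\rm kin,loc}((0,T_0]\times\R^6)$, as claimed.

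The main obstacle is justifying that Lemma~\ref{l:lower-bounds}---originally proved in~\cite{HST2018landau} for classical $H^4_{\rm ul}$ solutions---applies to our merely weak, uniformly continuous solution $g$. Since the proof of that lemma rests on comparison with smooth sub-solution barriers, and such comparison passes through the weak formulation whenever $g\in H^1_{\rm kin,loc}$, this extension should be routine. Alternatively, one can mollify $g$ in $(x,v)$, use the uniform continuity of $g$ to control the error in $\bar a^{g}$ pointwise, apply the classical lemma to the regularized equation, and pass to the limit.
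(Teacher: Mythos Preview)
Your overall strategy---bound the coefficients above, secure lower ellipticity of $\bar a^g$, then run De Giorgi followed by Schauder exactly as in the proof of Theorem~\ref{t:exist}(i)---is the same as the paper's. The divergence is in how you obtain lower ellipticity on the \emph{whole} interval $(0,T_0]$, and this is where your argument has a genuine gap.

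You correctly observe that uniform continuity of $g$, together with the pointwise positivity of $f_{\rm in}$, gives $g(t,\cdot,\cdot)\geq\tfrac{\delta}{2}\1_{B_{r/2}(x,v_x)}$ on some short interval $[0,\tau]$. The problem is your next step: you invoke Lemma~\ref{l:lower-bounds} to propagate this to $[\tau',T_0]$. That lemma, as stated, requires $e^{\rho|v|^2}f_{\rm in}\in H^4_{\rm ul}$, and its proof in \cite{HST2018landau} uses probabilistic representations and a strong maximum principle that are not obviously available at the level of $H^1_{\rm kin,loc}$ weak solutions. Your claim that ``such comparison passes through the weak formulation'' is not routine for those barriers, and your mollification alternative is also problematic: mollifying $g$ in $(x,v)$ does not commute with the transport operator $v\cdot\nabla_x$, so the regularized function no longer solves the Landau equation, and the commutator errors are not controlled by uniform continuity alone.

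The paper circumvents this entirely by a bootstrap. Rather than Gaussian lower bounds on $g$, it only seeks a uniform lower bound on the mass density $M_g(t,x)=\int g\,dv$, which (together with the upper bounds on $M_g,E_g,H_g$ coming from $L^{\infty,5+\eta}$) is already enough for the ellipticity needed in \cite{golse2016,henderson2017smoothing}. Uniform continuity gives $M_g\gtrsim 1$ on $[0,T_1]$ for some small $T_1$; this yields $g\in C^2_{\rm kin,loc}((0,T_1]\times\R^6)$ and hence classical regularity there. Now the Harnack inequality \cite[Theorem~4]{golse2016} applies on $[0,T_1]$, producing a quantitative lower bound $g>\e R^2$ on $\{T_1\}\times\R^3\times B_R(0)$. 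From there the paper runs a \emph{classical} comparison argument with the elementary barrier $\underline g(t,x,v)=\e(R^2-|v|^2-A(t-T_1))_+$ and a continuation-in-time argument (defining $T_h$ as the first time the comparison fails, and deriving a contradiction) to push the mass lower bound, and hence the regularity, all the way to $T_0$. The point is that each time the comparison principle is invoked, $g$ is already known to be $C^2_{\rm kin}$ on the relevant interval, so no weak-solution version of the barrier argument is ever needed.
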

\begin{proof}
In view of the arguments used in \Cref{t:exist}, namely an application, up to rescaling, of \cite[Theorem~3]{golse2016} and \cite[Theorem 2.12(a)]{henderson2017smoothing}, it is enough to verify that $M_g^{-1}$, $M_g$, $E_g$, and $H_g$ (recall the notation from \Cref{s:conditional}) are bounded uniformly.  The upper bounds on $M_g$, $E_g$, and $H_g$ follow directly from the $L^{\infty,5+\eta}$ bound on $g$. Therefore, the proof is completed after establishing a uniform positive lower bound on $M_g$ on $[0,T_0]\times\R^6$. 

First, we set some notation.  Let
\[
	A = 2\|\tr(\bar a^g)\|_{L^\infty([0,T_0]\times\R^6)} + 1
		\quad \text{and}\quad
	R = 2 \sqrt{AT_0}.
\]
Next, notice that since $g\in L^{\infty,5+\eta}([0,T_0]\times \R^6)$, $g$ is uniformly continuous, and $M_{f_{\rm in}} > (4\pi/3)\delta r^3$, then there exists $T_1 \in(0,T_0]$ such that, for all $(t,x) \in [0,T_1]\times\R^3$,
\begin{equation}\label{e:c012}
	M_g(t,x)
		\geq \frac{1}{T_1}.
\end{equation}
This concludes the proof on $[0,T_1]\times \R^3$, establishing that $g \in C^2_{\rm kin,loc}((0,T_1]\times\R^6)$.  Now that $g$ is sufficiently regular, we note that a classical comparison principle argument yields $g\geq 0$ on $[0,T_1]\times \R^6$.

We now obtain a lower bound on $M_g$ on $[T_1,T_0]\times\R^3$.  Since we have bounds on $M_g^{-1}$, $M_g$, $E_g$, and $H_g$ on $[0,T_1]$, we may iteratively apply the Harnack inequality for the Landau equation \cite[Theorem~4]{golse2016} in order to find $\e > 0$ such that
\[
	\delta R^2 < g \qquad \text{on } \{T_1\} \times \R^3 \times B_R(0).
\]
Let
\[
	\underline g(t,x,v)
		= \e (R^2 - |v|^2 - A(t-T_1))_+,
\]
and notice that, when $|v|^2 \leq R^2 - A(t-T_1)$,
\begin{equation}\label{e:c013}
	(\partial_t + v\cdot\nabla_x)\underline g
		- \tr(\bar a^g D^2_v \underline g)
		- \bar c^g \underline g
		< 0.
\end{equation}

If $\underline g \leq g$ on $[T_1, T_0]\times\R^6$, it is clear that we are finished since the choice of $R$ and $A$ imply that, for any $(t,x) \in [T_1,T_0]\times\R^3$,
\[
	\int_{B_{R/\sqrt 2}} \underline g(t,x,v) \dd v
		\geq \frac{\e \pi R^5}{6 \sqrt {2}}
		\geq \e R^5/10,
\]
and, hence, $M_g \geq \e R^5/10$ on $[T_1,T_0]\times\R^3$.  This, combined with~\eqref{e:c012} yields the desired lower bound on $M_g$ and would complete the proof.

Hence, we assume that $\underline g \not\leq g$ on $[T_1,T_0]\times\R^6$.  Let
\[
	T_h = \sup\{t\in [T_1,T_0] : \underline g(s,x,v) \leq g(t,x,v) \text{ for all } (s,x,v)\in[T_1,t]\times\R^6\}.
\]

We claim that $T_h = T_0$.  If this were true, our proof would be finished; hence, we suppose it is not true.  At time $T_h$, we have that $g \geq \underline g$, by continuity.  Hence $M_g(T_1,x) \geq \e R^5/10$ for all $x$.  As above, we find $\mu>0$ such that $M_g(t,x)> \mu$ for all $(t,x) \in [T_1, T_1 + \mu]\times \R^3$, and, hence, a $g\in C^2_{\rm kin, loc}$ and a classical comparison principle argument shows that $g\geq 0$ on $[T_1, T_1 + \mu]\times\R^6$.

From above, we have established that $M_g$ is positive on $[0,T_h+\mu]\times \R^6$ and $g\geq 0$ on the same set.  Thus, we may apply the Harnack inequality \cite[Theorem~4]{golse2016} in order to conclude that $g > 0$ on $[0, T_h + \mu]\times \R^6$.

By definition and since $T_h < T_0$, we find $z_0 \in [T_h, T_h+\mu]\times\R^6$ such that $g(z_0) < \underline g(z_0)$.  By the positivity of $g$ and up to recentering, we have that there exists $z_h \in [T_h, T_h + \mu]\times B_R \times\R^3$ such that $g(z_h) = \underline g(z_h)$, while $g \geq \underline g$ on $[0, t_h]\times \R^6$.

Let $\phi = g - \underline g$.  From the work above, we have that $z_h$ is the location of a minimum of $\phi$ where $\phi(z_h) = 0$.  Hence
\[
	(\partial_t + v\cdot \nabla_x)\phi
		- \tr(\bar a^g D^2_v \phi)
		- \bar c^g \phi
		\leq 0
\]
On the other hand, using~\eqref{e:landau} and~\eqref{e:c013}, we find
\[
	(\partial_t + v\cdot \nabla_x)\phi
		- \tr(\bar a^g D^2_v \phi)
		- \bar c^g \phi
		>0,
\]
which contradicts the previous inequality.  We conclude that $T_h = T_0$, and the proof is finished.
\end{proof}

\begin{proposition}\label{prop:uniqueness_small_times}
	Suppose that $\alpha \in (0,1)$, $k$ and $m$ satisfy the conditions in \Cref{prop:holder_propagation} and $m\geq 5$, $0 \leq f_{\rm in} \in L^{\infty,k}(\R^6)$, and $\vv^{m} f_{\rm in} \in C^\alpha(\R^6)$.  Let $f$ be the solution of~\eqref{e:landau} with initial data $f_{\rm in}$ constructed in \Cref{t:exist}, with $T_0>0$ such that $\vv^m f \in C^\alpha([0,T_0]\times\R^6)$.
	
	Let $g$ be a weak solution of \eqref{e:landau} satisfying the hypotheses of Lemma \ref{l:gC2}. 
	Then $f=g$.
\end{proposition}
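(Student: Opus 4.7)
The plan is to set $w := f - g$ and run a weighted $L^2$ Grönwall argument. By \Cref{l:gC2}, $g \in C^2_{\rm kin, loc}((0,T_0]\times \R^6)$, and $f$ enjoys the same regularity by \Cref{t:exist}. Subtracting the non-divergence form equations gives, pointwise for $t > 0$,
\[
(\partial_t + v \cdot \nabla_x) w = \tr(\bar a^g D^2_v w) + \tr(\bar a^w D^2_v f) + \bar c^g w + \bar c^w f,
\]
with $\bar a^w$ and $\bar c^w$ depending linearly on $w$ via \eqref{e:coeffs}. The initial-data matching (\Cref{p:initial} for $f$ and uniform continuity for $g$) gives $w(0,\cdot,\cdot) \equiv 0$.

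For an exponent $\ell>0$ to be chosen, set $E(t) := \|\vv^\ell w(t)\|_{L^2(\R^6)}^2$. Multiplying the $w$-equation by $\vv^{2\ell} w$ and integrating, the transport term vanishes and the divergence-form dissipation $-\int \vv^{2\ell} \nabla_v w \cdot (\bar a^g \nabla_v w)\,\dd x\,\dd v$ is nonpositive. The error when $\nabla_v$ falls on $\vv^{2\ell}$ is absorbed into the dissipation via Young's inequality, producing an extra $C E(t)$ term using $|\nabla_v \vv^{2\ell}|^2/\vv^{2\ell} \lesssim \vv^{2\ell-2}$ together with $|\bar a^g| \lesssim \vv^{(\gamma+2)_+}$ from \Cref{l:abc} (recall $(\gamma+2)_+ \leq 2$). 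The drift $\bar b^g \cdot \nabla_v w$ arising from the identity $\tr(\bar a^g D^2_v w) = \nabla_v \cdot (\bar a^g \nabla_v w) + \bar b^g \cdot \nabla_v w$ is handled similarly. The $\bar c^g w^2$ and $\bar c^w f w$ pieces are dominated by $C E(t)$ via Cauchy--Schwarz in $v$, which converts the convolutions in \eqref{e:coeffs} into weighted $L^2_v$-norms of $w$, combined with the $L^{\infty,k}$ bound on $f$.

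The crucial term is $I(t) := \int \vv^{2\ell} w \tr(\bar a^w D^2_v f)\,\dd x\,\dd v$. Cauchy--Schwarz in velocity yields $|\bar a^w(t,x,v)| \lesssim \langle v\rangle^{(\gamma+2)_+} \|\langle \cdot \rangle^\ell w(t,x,\cdot)\|_{L^2_v}$ as long as $\ell > \gamma + 7/2$ so that the dual integral $\int |v-v'|^{2(\gamma+2)} \langle v'\rangle^{-2\ell}\,\dd v'$ converges and grows only polynomially in $v$. Invoking \Cref{l:time-int} (which applies since $\langle v\rangle^m f \in C^\alpha([0,T_0]\times\R^6)$ by hypothesis) gives $\|\vv^{m+(\gamma+2)_+} D^2_v f(t)\|_{L^\infty} \lesssim t^{-1+\alpha^2/(6-\alpha)}$, so whenever $\ell \leq m$,
\[
|I(t)| \lesssim t^{-1+\alpha^2/(6-\alpha)} E(t).
\]
Combining with the preceding estimates,
\[
\tfrac{d}{dt} E(t) \leq \phi(t) E(t), \qquad \phi(t) := C\left(1 + t^{-1+\alpha^2/(6-\alpha)}\right) \in L^1([0,T_0]).
\]
Since $E(0) = 0$ and $E$ is continuous up to $t = 0$, Grönwall forces $E \equiv 0$, hence $f \equiv g$ on $[0,T_0]\times \R^6$.

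The main obstacle is choosing the weight $\ell$ to satisfy several competing constraints at once: large enough (i) to make the dual Cauchy--Schwarz integrals arising from $\bar a^w$, $\bar b^w$, and $\bar c^w$ finite and only polynomially controlled in $v$, and (ii) to ensure $\langle v\rangle^\ell w \in L^2(\R^6)$ given only the polynomial $L^\infty$ decay of $f$ and $g$ provided by the hypotheses on $k$ and on $g$, yet (iii) no larger than $m$ so that \Cref{l:time-int} absorbs the velocity growth of $D^2_v f$. Verifying that the assumed ranges of $\alpha, \gamma, k, m, \eta$ admit such an $\ell$ is the delicate bookkeeping step. A secondary technical point is justifying the energy identity rigorously without compact support in $v$: one first truncates by multiplying with a cutoff $\chi_R(v)$, uses the polynomial decay of $f$ and $g$ to integrate by parts, and then removes the truncation by sending $R \to \infty$.
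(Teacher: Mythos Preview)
Your $L^2$ Grönwall approach has a fundamental gap: the energy $E(t)=\|\vv^\ell w(t)\|_{L^2(\R^6)}^2$ is not finite in general. Neither $f$ nor $g$ is assumed to decay as $|x|\to\infty$; the hypotheses are purely $L^\infty$-based in $x$ (e.g.\ $f_{\rm in}\in L^{\infty,k}(\R^6)$, $g\in L^{\infty,5+\eta}$), and the class of admissible data explicitly includes functions that are constant or periodic in $x$, such as global Maxwellians. For such data, $w=f-g$ need not lie in $L^2_x$, so $E(t)$ is infinite and the argument never starts. This is exactly the obstruction discussed in \S\ref{s:ideas}, where the paper explains why $L^2$ energy methods are inadequate on the whole space. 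Your constraint (ii) (``ensure $\langle v\rangle^\ell w\in L^2(\R^6)$ given only the polynomial $L^\infty$ decay'') cannot be met for any $\ell$, because polynomial $v$-decay says nothing about $x$-integrability.

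Relatedly, your claim that ``the transport term vanishes'' requires integration by parts in $x$ with vanishing boundary terms at $|x|\to\infty$, which again fails without $x$-decay. If you tried to salvage this with a spatial cutoff $\varphi(x)$, the transport term would produce $\int f^2 v\cdot\nabla_x(\varphi^2)$, which grows in $|v|$ and cannot be controlled by $E(t)$---precisely the difficulty explained after~\eqref{e:energy}.

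The paper instead runs a pointwise $L^\infty$ maximum-principle argument: set $W=\tfrac12\vv^{10}(e^{-\int_0^t r}\,(g-f))^2$ and, assuming $\sup W\geq\eps$, locate (after re-centering in $x$) a first touching point $z_\eps$. At $z_\eps$ one has $\nabla_v W=0$, $D_v^2 W\leq 0$, and $(\partial_t+v\cdot\nabla_x)W\geq 0$, which kills the transport term for free without any $x$-integrability. The bad cross-term $\vv^{10}w\,\tr(\bar a^w D_v^2 f)$ is bounded via \Cref{l:time-int} just as you intended, and choosing $r(t)=C(2+t^{-1+\alpha^2/(6-\alpha)})$ gives the contradiction. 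The key structural difference is that the $L^\infty$ framework is compatible with the lack of $x$-decay, whereas your $L^2$ framework is not.
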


Before proceeding we comment briefly on the assumptions.  The above is a form of weak-strong uniqueness; that is, the uniqueness holds in a weaker class than the constructed solution.  We believe that this is not the weakest class in which uniqueness holds and, at the expense of more technical arguments, one may remove the added assumption that $g$ is uniformly continuous.  Indeed, if the lower bounds of Lemma \ref{l:lower-bounds} were extended to weak solutions, one would get uniform continuity ``for free'' from the H\"older estimate of~\cite{golse2016}.



\begin{proof}
We assume that $f_{\rm in}\not\equiv 0$.  Indeed if $f_{\rm in} \equiv 0$, uniqueness holds via \cite[Theorem~1.1]{HST2018landau}.  By Lemma \ref{l:gC2}, $g\in C^2_{\rm kin,loc}([0,T_0]\times\R^6)$. 

By Proposition \ref{p:initial} and our assumptions, $f$ and $g$ are both continuous up to $t=0$.  
For a positive function $r\in C(0,T_0] \cap L^1(0,T_0]$ to be determined, let
\[
	w = e^{-\int_0^t r(s) ds}(g-f)
		\quad \text{ and }
	W = \frac 1 2 \langle v\rangle^{10} w^2.
\]
Then a straightforward computation yields that, whenever $W \neq 0$,
\begin{equation}\label{e:uniqueness_difference}
\begin{split}
	 \partial_t \cW + &v \cdot \nabla_x \cW
		= \tr\left( \bar a^g D^2_v \cW\right) - \cW^{-1} \nabla_v  \cW \cdot (\bar a^g \nabla_v \cW)
			+ 10 \vv^{-2} v \cdot (\bar a^g \nabla_v \cW)\\
			& + \left(-35 \vv^{-4} v \cdot  (\bar a^g v) + 5 \vv^{-2} \tr(\bar a^g) + \bar c^g\right) \cW
			+ \vv^{10} w\,\tr\left( \bar a^w D_v^2 f\right)
			+ \vv^{10}w \bar c^w f
			- r \cW.
	\end{split}
\end{equation}

Fix $\epsilon>0$, and assume by contradiction that $\sup_{[0,T_0]\times \R^6} \cW(t,x,v) \geq \eps$.  Up to re-centering the equation as in Step 2 of the proof of Proposition \ref{prop:holder_propagation}, we may assume there exists $z_\e = (t_\e, x_\e, v_\e)\in (0,T_0]\times \R^6$ such that $\cW(z_\e) = \e$ and $\cW(t,x,v)<\eps$ for all $t<t_\eps$ and $(x,v)\in \R^6$.  Note that this reduction uses strongly that the $f$ and $g$ are uniformly continuous for two reasons: (i) the re-centering requires an Arzela-Ascoli-based compactness argument on the $x$-translates of $\cW$ and (2) to conclude that $t_\e > 0$ after the re-centering.

We immediately have that, at $z_\e$, $\nabla_v \cW = 0$ and $D^2_v \cW \leq 0$. Derivatives in $t$ and $x$ may not exist pointwise, but we have $(\partial_t +v\cdot\nabla_x)\cW \geq 0$. (This follows by considering the directional derivative of $W(t,x,v)$ at $z_\eps$ in the direction $(1,v,0)$, since $z_\eps$ is a maximum point in $[0,t_\eps]\times \R^6$.) 
It is also clear that $\cW(z_\e) = \e =  \|\cW\|_{L^\infty([0,t_\e]\times \R^6)}$.  Finally, since $g\in L^{\infty,5+\eta}$, we have that $\bar a^g$ and $\bar c^g$ are bounded according to~\Cref{l:abc}.  Using the above in~\eqref{e:uniqueness_difference} and moving the $r\cW$ term to the left hand side, we find
\begin{equation}\label{e:holder_c13}
	r\cW \lesssim
			 \cW + \vv^{10} |w| |\bar a^w|  |D^2_v f| + \vv^5|w| |\bar c^w|.
\end{equation}

Next, we notice that, at $z_\e$,
\begin{equation}\label{e:holder_c11}
	|\bar a^w|
		\lesssim \langle v\rangle^{7+\gamma} \|w(t_\eps,\cdot,\cdot)\|_{L^\infty(\R^6)}
		= \langle v\rangle^{7+\gamma} w
	\quad \text{ and, similarly,} \quad
	|\bar c^w| \lesssim \vv^5 w.
\end{equation}
Recall that $m \geq 5$, and, thus, $m + (2+\gamma)_+ \geq 7 + \gamma$.  We can also apply \Cref{l:time-int} to obtain 
\begin{equation}\label{e:holder_c12}
	\vve^{7+\gamma} |D^2_v f|
		\leq \|D^2_v f(t_\eps,\cdot,\cdot)\|_{L^{\infty,7+\gamma}(\R^6)}
		\lesssim t_\e^{-1 + \frac{\alpha^2}{6-\alpha}}.
\end{equation}
Using~\eqref{e:holder_c11} and~\eqref{e:holder_c12} in~\eqref{e:holder_c13} and recalling the relationship between $w$ and $W$, we find a constant $C$, such that
\[
	r \cW \leq C \Big(1 + t_\e^{-1 + \frac{\alpha^2}{6-\alpha}}\Big)\cW.
\]
With this value of $C$, we obtain a contradiction by defining
\[
	r(t) = C\Big(2+t^{-1 + \frac{\alpha^2}{6-\alpha}}\Big).
\]
We therefore conclude that $z_\e$ cannot exist.  Since this is true for all $\e$, we find that $\cW = 0$, which implies that $g = f$.
%
\end{proof}

Finally, it is straightforward to prove \Cref{t:unique} from \Cref{prop:uniqueness_small_times}:

\begin{proof}[Proof of \Cref{t:unique}]
By \Cref{prop:holder_propagation}, there exists $T_H >0$ such that $\vv^m f \in \Ckin^\alpha([0,T_H]\times \R^6)$.  Thus, we may apply \Cref{prop:uniqueness_small_times} to conclude the first part of \Cref{t:unique}.

We now consider the case where $f_{\rm in} \in L^{\infty,k'}(\R^6)$ for all $k'$.  Combining \Cref{t:exist}.(iii) and \Cref{t:continuation}, we see that $f$ is smooth and all its derivatives lie in $L^{\infty,k'}([0,T]\times \R^6)$ for all $k'$.  Thus, \Cref{prop:uniqueness_small_times} applies with $T_H = T$, which concludes the proof.
\end{proof}

\appendix

\section{Regularity in $x$ and $v$ implies regularity in $t$}\label{s:reg}

We consider solutions $f$ to the linear equation
\begin{equation}\label{e:landau_div}
\partial_t f + v\cdot \nabla_x f = \Tr(a D^2_v f) + c f,
\end{equation}
where $a$ is a non-negative definite matrix that grows at most like $\langle v\rangle^{2+\gamma}$, $c$ is uniformly bounded, and~\eqref{e:landau_div} enjoys a maximum principle.  All implied constants depend only on the upper bounds of $a$ and $c$, but {\em do not} depend on any lower bound of $a$.

In this appendix, we show the following:
\begin{proposition}\label{prop:holder_in_t}
Suppose that $f \in C^\alpha_{{\rm kin, loc}}([0,T]\times \R^6) \cap L^\infty([0,T]\times \R^6)$ and solves~\eqref{e:landau_div}.  Then, for any $z_0$, we have
\begin{equation*}
	\|f\|_{\Ckin^{\alpha}(Q_1(z_0)\cap [0,T]\times \R^6)}
		\lesssim \langle v_0\rangle^{\alpha (1+\gamma/2)_+} \Big(\|f\|_{L^\infty([0,T]\times \R^6)} +  \sup_{t \in [\max\{0,t_0-1\},t_0]} [f(t,\cdot,\cdot)]_{C_{{\rm kin}, x,v}^{\alpha}(B_2(x_0,v_0))}\Big).
\end{equation*}
The implied constant depends {\em only} on the upper bounds of $a$ and $c$.
\end{proposition}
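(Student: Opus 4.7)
The plan is to reduce to a normalized setting via the change of variables $\mathcal{S}_{z_0}\circ \delta_{r_1}$ from Section 2, and then to upgrade the assumed same-time regularity in $(x,v)$ to the full kinetic $\Ckin^\alpha$ regularity by using the equation. Only the upper ellipticity bound on $a$ is needed in what follows. Define $f_{z_0}(z) = f(\mathcal{S}_{z_0}(\delta_{r_1}z))$ as in~\eqref{e:transformation_L_infinity}. By \Cref{l:transformation_L_infinity} and~\eqref{e:rho_transformation}, $f_{z_0}$ solves~\eqref{e:landau_div} on $Q_1$ with coefficients bounded by absolute constants, $\|f_{z_0}\|_{L^\infty(Q_1)} \lesssim \|f\|_{L^\infty}$, and the $C^\alpha_{{\rm kin},x,v}$ norm at each time is controlled by $\langle v_0\rangle^{-\alpha((1+\gamma/2)_+-\gamma/2)}$ times the right-hand side of the statement. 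It therefore suffices to establish the unweighted estimate $\|f_{z_0}\|_{\Ckin^\alpha(Q_{1/2})} \lesssim \|f_{z_0}\|_{L^\infty(Q_1)} + \sup_t [f_{z_0}(t,\cdot,\cdot)]_{C^\alpha_{{\rm kin},x,v}}$; undoing the change of variables then produces exactly the velocity weight $\langle v_0\rangle^{\alpha(1+\gamma/2)_+}$ claimed.

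For two points $z_1,z_2 \in Q_{1/2}$ with $t_1 \geq t_2$, insert the intermediate point $z^* = (t_2, x_1 - (t_1-t_2)v_1, v_1)$ on the backward characteristic starting at $z_1$. The same-time difference is directly controlled by the hypothesis:
\[
 |f_{z_0}(z^*) - f_{z_0}(z_2)| \lesssim \bigl(|x_1 - x_2 - (t_1-t_2)v_1|^{1/3} + |v_1 - v_2|\bigr)^\alpha \leq \rho(z_1,z_2)^\alpha,
\]
so the proof reduces to bounding the characteristic-time difference $|f_{z_0}(z_1) - f_{z_0}(z^*)|$ by $\tau^{\alpha/2}$, where $\tau = t_1 - t_2$. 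Formally, with $u(s) = f_{z_0}(t_2+s, x_1 - (\tau-s)v_1, v_1)$ we have $u'(s) = \tr(\overline A D_v^2 f_{z_0}) + \overline C f_{z_0}$ evaluated along the characteristic, but $D_v^2 f_{z_0}$ does not exist pointwise under a mere $C^\alpha$ hypothesis.

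The main step is to resolve this via a kinetic mollification. Passing to Galilean variables $(s, y, w) = (t, x-tv, v)$ removes the transport term; the equation becomes $\partial_s \widetilde{f}_{z_0} = \tr(\widetilde A (D_v - s D_y)^{\otimes 2} \widetilde{f}_{z_0}) + \widetilde C \widetilde{f}_{z_0}$. Mollify at matched kinetic scales $\eps^3$ in $y$ and $\eps$ in $w$, using a mollifier $\phi_\eps(y,w) = \eps^{-6}\phi(y/\eps^3, w/\eps)$. The $C^\alpha_{{\rm kin},x,v}$ hypothesis yields $|\widetilde{f}_{z_0,\eps} - \widetilde{f}_{z_0}| \lesssim \eps^\alpha$, and standard mollifier estimates give $|D_v^2 \widetilde{f}_{z_0,\eps}|,\ s|D_{yv}\widetilde f_{z_0,\eps}|,\ s^2|D_y^2\widetilde f_{z_0,\eps}| \lesssim \eps^{\alpha - 2}$. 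Using the equation for $\widetilde f_{z_0}$ and passing all derivatives onto the mollifier, $|\partial_s \widetilde{f}_{z_0,\eps}| \lesssim \eps^{\alpha-2}$. Returning to kinetic variables and integrating in $s \in [0,\tau]$ gives
\[
 |f_{z_0}(z_1) - f_{z_0}(z^*)| \lesssim \eps^\alpha + \tau\, \eps^{\alpha-2},
\]
and optimizing at $\eps = \tau^{1/2}$ yields the required $\tau^{\alpha/2}$ bound.

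The principal technical obstacle is precisely this mollification step: the commutator between $v$-mollification and the transport $v\cdot\nabla_x$ formally involves $\nabla_x f$, which is not directly controlled. Passing to Galilean coordinates is what makes the argument go through, because it eliminates the transport term at the cost of introducing mixed $y$-$w$ derivatives, and these are tamed exactly because the kinetic $y$-scale $\eps^3$ compensates the loss in $x$-smoothing. A minor additional point is that the bounded (not smooth) coefficients $\widetilde A, \widetilde C$ do not commute with mollification, but this is harmless: we only need the resulting $L^\infty$ bound on $\partial_s \widetilde{f}_{z_0,\eps}$, for which the coefficients' boundedness suffices and no derivatives of them are needed.
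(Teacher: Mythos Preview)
Your approach differs substantially from the paper's, and it has two genuine gaps.

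\textbf{First gap: the reduction to bounded coefficients.} You invoke \Cref{l:transformation_L_infinity} to claim that, after $\mathcal S_{z_0}\circ\delta_{r_1}$, the diffusion matrix $\overline A$ is bounded by absolute constants. But that lemma requires the \emph{anisotropic} two-sided bounds~\eqref{e:g-a} on $a$, whereas \Cref{prop:holder_in_t} assumes only the isotropic upper bound $|a|\lesssim\langle v\rangle^{(2+\gamma)_+}$ and explicitly \emph{no} lower bound. Under this weaker hypothesis, $|S^{-1}a_{z_0}S^{-1}|$ can be as large as $\langle v_0\rangle^{(2+\gamma)_+-\gamma}$, which is unbounded in $v_0$. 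The paper avoids this by using the isotropic shift $\mathcal T_{z_0}$ together with a time restriction $t_1\le\langle v_0\rangle^{-(2+\gamma)_+}$, and this is exactly where the weight $\langle v_0\rangle^{\alpha(1+\gamma/2)_+}$ in the conclusion comes from.

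\textbf{Second gap: the commutator with rough coefficients.} Even if one grants bounded $\widetilde A$, your claim that ``passing all derivatives onto the mollifier'' yields $|\partial_s\widetilde f_{z_0,\eps}|\lesssim\eps^{\alpha-2}$ does not go through. Writing $\partial_s\widetilde f_{z_0,\eps}=[\tr(\widetilde A(D_w-sD_y)^2\widetilde f_{z_0})]*\phi_\eps+[\widetilde C\widetilde f_{z_0}]*\phi_\eps$, the first term cannot be bounded without either (i) integrating by parts in the convolution, which puts $(D_w-sD_y)$-derivatives on $\widetilde A$, or (ii) freezing $\widetilde A$ and controlling the remainder, which still leaves an integral of $(\widetilde A-\widetilde A_0)D^2\widetilde f_{z_0}$ against $\phi_\eps$ with no a priori bound on $D^2\widetilde f_{z_0}$. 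Your remark that this is ``harmless'' because ``no derivatives of them are needed'' is precisely the step that fails: derivatives of $\widetilde A$ are unavoidable in any integration-by-parts scheme for a non-divergence operator with merely bounded coefficients. This kind of commutator estimate typically needs at least continuity (often H\"older) of the coefficients.

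The paper's proof sidesteps both issues by a maximum principle argument: it builds an explicit barrier $\overline F(t)$ for a cut-off version of $f(\mathcal T_{z_0}(\delta_r z))-f(z_0)$ on a large but finite velocity ball, using only $a\ge0$, the isotropic upper bound on $a$, and boundedness of $c$. No regularity of $a$ and no lower ellipticity are needed, and the velocity growth of $a$ is absorbed into the time restriction rather than into a change of metric.
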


A useful transformation here is given by
\[
	\mathcal{T}_{z_0}(z) = (t_0 + t, x_0 + x + tv_0, v_0 + v).
\]
Recall also $\delta_r$ from~\eqref{e:rho_transformation}.  The main lemma is the following.
\begin{lemma}\label{lem:scaling_holder}
For any $z_0 \in [0,T]\times \R^6$, $r\in (0,1]$, $t_1 \in[0, \langle v_0 \rangle^{-(2+\gamma)_+}]$ such that $r^2 t_1 + t_0 \in [0,T]$ and $|x_1|, |v_1| < 1$, and for $z_1 = (t_1, x_1, v_1)$, we have
\begin{equation*}
\begin{split}
	|f(\mathcal{T}_{z_0}(\delta_r (z_1))) - f(z_0)|
		\lesssim |r|^\alpha \left(\|f\|_{L^\infty([0,T]\times \R^6)} + [f(t_0,\cdot,\cdot)]_{C^\alpha_{{\rm kin},x,v}(B_2(x_0,v_0))}\right).
\end{split}
\end{equation*}
\end{lemma}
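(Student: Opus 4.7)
The plan is to reduce the estimate to a comparison-principle argument for a rescaled solution. Set $g(t,x,v):=f(\mathcal{T}_{z_0}(\delta_r(t,x,v)))$ on $[0,t_1]\times\R^6$; a direct chain-rule computation using~\eqref{e:landau_div} shows $g$ satisfies
\begin{equation*}
\partial_t g + v\cdot\nabla_x g = \tr\bigl(\tilde a\, D_v^2 g\bigr) + r^2 \tilde c\, g,\qquad \tilde a(z):=a(\mathcal{T}_{z_0}(\delta_r z)),\; \tilde c(z):=c(\mathcal{T}_{z_0}(\delta_r z)),
\end{equation*}
with $|\tilde a|\lesssim \langle v_0\rangle^{(2+\gamma)_+}+(r|v|)^{(2+\gamma)_+}$ and $|\tilde c|\lesssim 1$. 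Writing $\mu^2:=\langle v_0\rangle^{(2+\gamma)_+}$ and $M:=[f(t_0,\cdot,\cdot)]_{C^\alpha_{{\rm kin},x,v}(B_2(x_0,v_0))}$, the H\"older hypothesis translates to $|g(0,x,v)-g(0,0,0)|\leq Mr^\alpha(|x|^{1/3}+|v|)^\alpha$ on $\{|r^3x|, |rv|\leq 2\}$, while $|g|\leq\|f\|_\infty$ globally. The conclusion reduces to $|g(t_1,x_1,v_1)-g(0,0,0)|\lesssim r^\alpha(M+\|f\|_\infty)$.

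To establish this, I would compare $w:=g-g(0,0,0)$ (which solves $Lw=r^2\tilde c\,g(0,0,0)$, so $|Lw|\lesssim r^2\|f\|_\infty$, with $L:=\partial_t+v\cdot\nabla_x-\tr(\tilde a D_v^2\cdot)-r^2\tilde c$) with the explicit barrier
\begin{equation*}
\bar\phi(t,x,v) := C_1 r^\alpha(M+\|f\|_\infty)\,e^{\lambda t}\,P(t,x,v)^{\alpha/2}, \qquad P := 1 + (1+|x|^2)^{1/3} + |v|^2 + K\mu^2 t,
\end{equation*}
with constants $C_1, K, \lambda > 0$ chosen below, in that order, depending only on $\alpha$ and the structural constants. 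The pointwise initial domination $\bar\phi(0,\cdot)\geq|w(0,\cdot)|$ is checked by splitting into the H\"older region, where $(|x|^{1/3}+|v|)^\alpha\leq 2^{\alpha/2} P(0,x,v)^{\alpha/2}$ does the job, and its complement, where $P(0,x,v)^{\alpha/2}\gtrsim r^{-\alpha}$ forces the $\|f\|_\infty$ factor in $\bar\phi$ to dominate. The supersolution condition $L\bar\phi\geq |Lw|$ follows from the chain rule $D_v^2 P^{\alpha/2}=\alpha P^{\alpha/2-1}I+\alpha(\alpha-2)P^{\alpha/2-2}v\otimes v$ (the $(\alpha-2)$-piece is non-positive and hence beneficial when subtracted in $L$): keeping only the negative-trace contribution, one finds
\begin{equation*}
\frac{L\bar\phi}{C_1 r^\alpha(M+\|f\|_\infty)\,e^{\lambda t}}\;\geq\; (\lambda-r^2|\tilde c|)P^{\alpha/2}+\Bigl[\tfrac{\alpha K\mu^2}{2}-\tfrac{\alpha}{2}|v|\,|\nabla_x Q|-\alpha\tr(\tilde a)\Bigr]P^{\alpha/2-1},
\end{equation*}
with $Q=(1+|x|^2)^{1/3}$. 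In the near regime $|v|\leq \langle v_0\rangle/r$ one has $\tr(\tilde a)\lesssim \mu^2$, absorbed by $K\mu^2/2$ for $K$ a large multiple of the implicit constant; in the far regime, the ratio estimate $\tr(\tilde a)/P\lesssim r^{(2+\gamma)_+}|v|^{(2+\gamma)_+-2}$, which is uniformly bounded thanks to $(2+\gamma)_+\leq 2$, allows the $\lambda P^{\alpha/2}$ term to absorb the negative contribution. The transport piece $|v|\,|\nabla_x Q|P^{\alpha/2-1}\lesssim |v|^{\alpha-1}$ is bounded for $\alpha<1$.

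Applying the parabolic maximum principle to $\bar\phi - w$ on $[0,t_1]\times\R^6$---standard once one observes that $\bar\phi-w\to\infty$ at spatial infinity while $|w|\leq 2\|f\|_\infty$, using the usual $\varepsilon(|x|^2+|v|^2)$-auxiliary if needed---yields $w\leq\bar\phi$; a symmetric construction gives $w\geq-\bar\phi$. At the target point, the hypothesis $t_1\leq\mu^{-2}$ gives $P(t_1,x_1,v_1)\leq 3+K$ and $e^{\lambda t_1}\leq e^\lambda$, so $|w(t_1,x_1,v_1)|\leq\bar\phi(t_1,x_1,v_1)\lesssim r^\alpha(M+\|f\|_\infty)$, as desired. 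The main obstacle will be the design of $P$: the $|v|^2$-term must be large enough to both overpower $|w(0)|$ in the far field (where we only have the $L^\infty$-bound, forcing $P^{\alpha/2}\gtrsim r^{-\alpha}$) \emph{and} to tame the unbounded growth of $\tr(\tilde a)$ in $v$ through the ratio $\tr(\tilde a)/P$, but not so large as to break the transport control in the supersolution check; the sharp inequality $(2+\gamma)_+\leq 2$ is precisely what makes the chosen exponents of $P$ compatible across all parameter regimes.
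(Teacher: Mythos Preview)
Your approach is correct and genuinely different from the paper's. The paper instead localizes by multiplying the rescaled difference $\tilde F = f_r - f(z_0)$ by a kinetically transported cutoff $\psi(t,x,v)=\phi(x-tv,v)$ with carefully designed polynomial decay, then compares the product $F=\psi\tilde F$ to a barrier that is a function of $t$ only on the bounded slab $[0,t_1]\times\R^3\times B_R(0)$ with $R\sim\langle v_0\rangle^3/r$. The far field is handled by the decay of $\psi$ (which forces $|F|\lesssim r^\alpha\|f\|_\infty$ on $|v|=R$), and the main technical step is an analogue of your coefficient control, namely showing that the lower-order terms generated by the cutoff---of the form $\nabla_v\log\psi\cdot(a_r\nabla_v\log\psi)$ and $\tr(a_r\psi^{-1}D_v^2\psi)$---are uniformly $\lesssim\langle v_0\rangle^{(2+\gamma)_+}$ on the slab.

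Your route trades the cutoff for a spatially growing barrier and works on all of $\R^6$. This is cleaner in that there is no auxiliary equation for $\psi\tilde F$ and no boundary term; the price is that the barrier $P^{\alpha/2}$ must simultaneously dominate the initial data in both the H\"older and $L^\infty$ regions, absorb the transport term, and control $\tr(\tilde a)$ through the ratio $\tr(\tilde a)/P$. Both arguments ultimately hinge on the same structural fact $(2+\gamma)_+\le 2$: in the paper it enters when bounding $\langle v_0+rv\rangle^{2+\gamma}/\langle v\rangle^2$ inside the slab, while for you it appears as the exponent inequality making $\tr(\tilde a)/P$ bounded in the far regime. Your construction is closer in spirit to the barrier used in the paper's Proposition~\ref{p:initial} for matching the initial data.
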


\subsection{Concluding~\Cref{prop:holder_in_t} from~\Cref{lem:scaling_holder}}

\begin{proof}[Proof of \Cref{prop:holder_in_t}]
Fix any $z_1$ and $z_2$ in $Q_1(z_0)$.  Recall that $\rho(z_1,z_2) \approx \rho(z_2,z_1)$ so we may assume that $t_2 \geq t_1$.

We consider first the case when $\rho(z_2, z_1) \geq \langle v_1\rangle^{-(1+\gamma/2)_+}$.  Then we have
\[
	\frac{|f(z_2) - f(z_1)|}{\rho(z_2,z_1)^{\alpha}}
		\leq \langle v_1\rangle^{(1+\gamma/2)_+\alpha} \| f\|_{L^\infty([0,T]\times \R^6)}
		\lesssim \langle v_0\rangle^{(1+\gamma/2)_+\alpha} \| f\|_{L^\infty([0,T]\times \R^6)},
\]
since $\langle v_1\rangle^{(1+\gamma/2)_+} \approx \langle v_0 \rangle^{(1+\gamma/2)_+}$.

Next we consider the case when $\rho(z_2,z_1) < \langle v_1\rangle^{-(1+\gamma/2)_+}$.  Clearly, then, $t_2 - t_1 < \langle v_0\rangle^{-(2+\gamma)_+}$.  Let
\[
	s_2 = \langle v_1 \rangle^{-(2+\gamma)_+},
		\quad r^2 = \frac{t_2-t_1}{s_2},
		\quad w_2 = \frac{v_2-v_1}{r},
			\quad \text{ and } \quad
		y_2 = \frac{x_2 - x_1 - r^2 s_2 v_1}{r^3}.
\]
Notice that $z_2 = \mathcal{T}_{z_1}(s_2,y_2,w_2)$, $|s_2| \leq \langle v_1 \rangle^{-(2+\gamma)_+}$, and $r \in (0,1)$.  Hence, we may apply~\Cref{lem:scaling_holder} to find
\[
	|f(z_2) - f(z_1)|
		\lesssim r^\alpha \left(\|f\|_{L^\infty([0,T]\times \R^6)} + [f(t_1,\cdot,\cdot)]_{C^\alpha_{{\rm kin},x,v}(B_2(x_0,v_0))}\right).
\]
Recalling the definition of $r$, we have
\[\begin{split}
	\frac{|f(z_2) - f(z_1)|}{\rho(z_2,z_1)^\alpha}
		&\lesssim \frac{r^\alpha}{\rho(z_2,z_1)^\alpha} \left(\|f\|_{L^\infty([0,T]\times \R^6)} + [f(t_1,\cdot,\cdot)]_{C^\alpha_{{\rm kin},x,v}(B_2(x_0,v_0))}\right)\\
		&= \frac{|t_2-t_1|^{\alpha/2}}{s_1^{\alpha/2}\rho(z_2,z_1)^\alpha} \left(\|f\|_{L^\infty([0,T]\times \R^6)} + [f(t_1,\cdot,\cdot)]_{C^\alpha_{{\rm kin},x,v}(B_2(x_0,v_0))}\right)
\end{split}\]
The proof is finished after noting that $s_1^{-\alpha/2} \lesssim \langle v_0 \rangle^{\alpha(1 + \gamma/2)_+}$ and $|t_2- t_1|^{\alpha/2} \leq \rho(z_2,z_1)^\alpha$.
\end{proof}

\subsection{The proof of~\Cref{lem:scaling_holder}}

\begin{proof}[Proof of \Cref{lem:scaling_holder}]
The proof is based on a maximum principle argument.  Let $z_0$ be as in the statement of the lemma.  Without loss of generality, we may assume that $t_0 = 0$ and $x_0 = 0$.  Then $z_1 \in [0, \min\{\langle v_0\rangle^{(2+\gamma)_+}, T\}]\times B_1(0) \times B_1(v_0)$.  

\medskip

\noindent {\em Step 1: A cut-off function.} Let $\phi \in C_c^\infty(\R^6)$  be a cut-off function such that
\begin{equation}\label{e:cutoff}
	\begin{split}
		&0 \leq \phi \leq 1,
			\quad \phi \equiv 1 \text{ on } B_1(0),\\
		&\phi\lesssim \langle v\rangle^{-3} \langle x\rangle ^{-3},\\
		&|\partial_{v_i} \phi|\lesssim \langle v\rangle^{-4}\langle x\rangle^{-3} \quad\text{ for all } i = 1, 2, 3,\\
		& |\partial_{x_i} \phi|\lesssim \langle v\rangle^{-3}\langle x\rangle^{-4} \quad\text{ for all } i = 1, 2, 3, \quad \text{ and}\\
		 &|\partial_{v_iv_j} \phi| \lesssim \langle v\rangle^{-5} \langle x\rangle ^{-3} \quad \text{ for all } i,j=1,2,3.
\end{split}
\end{equation}
We also set some useful notation.  For any $r\geq 0$ and any function $g$, let
\[
	g_r(z) = g(\mathcal{T}_{z_0}(\delta_r(z))).
\]

\medskip

\noindent {\em Step 2: An auxiliary function and its equation.} 
Then, let
\[
	\tilde F(z) = f_r(z) - f(0, 0, v_0).
\]
It is straightforward to check that
\[
	\tilde F_t + v\cdot \nabla_x \tilde F - \Tr(a_r D^2_v \tilde F)
		= r^2 c_r f_r.
\]
Let
\[
	\psi(t,x,v) = \phi(x - tv, v)
		\quad \text{ and } \quad
	F = \psi \tilde F.
\]
Again, after a straightforward computation, we find
\begin{equation*}
\begin{split}
	F_t + &v\cdot \nabla_x F
		- \Tr(a_r D^2_v F) + 2 (a_r \nabla_v \log(\psi))\cdot \nabla_v F\\
		&= F\left( 2 \nabla_v \log(\psi)\cdot (a_r \nabla_v \log(\psi)) - \Tr(a_r \psi^{-1}D_v^2 \psi)\right) + r^2\psi c_r f_r.
\end{split}
\end{equation*}

Fix $R = 2\langle v_0 \rangle^3 / r$.  
We claim that there exists a constant $C>0$ such that, on $[0,t_1]\times \R^3 \times B_R(0)$,
\begin{equation}\label{e:holder1}
	2|\nabla_v \log(\psi) \cdot (a_r \nabla_v \log(\psi))| + |\Tr(a_r \psi^{-1}D_v^2 \psi)|
		< C\langle v_0 \rangle^{(2+\gamma)_+}.
\end{equation}
This is established at the end of this proof.

Assuming \eqref{e:holder1}, let
\[
	\overline F(t) = e^{t C \langle v_0\rangle^{(2+\gamma)_+}}\left(\|F(0,\cdot,\cdot)\|_{L^\infty(\R^6)}
		+ \sup_{s\in[0,t_1],x\in\R^3, |v| = R} F(s,x,v)_+
		+ r^2 t \|c\|_{L^\infty}\|f\|_{L^\infty}\right),
\]
Then, since $\overline F \geq 0$, 
\begin{equation*}
\begin{split}
	\overline F_t + &v\cdot \nabla_x \overline F
		- \Tr(a_r D^2_v \overline F) + 2 (a_r \nabla_v \log(\psi))\cdot \nabla_v \overline F\\
		&=  C \langle v_0\rangle^{(2+\gamma)_+} \overline F + r^2 \|c\|_{L^\infty} \|f\|_{L^\infty} e^{C\langle v_0 \rangle^{(2+\gamma)_+} t} \\
		&> \overline F\left( 2 \nabla_v \log(\psi) \cdot( a_r \nabla_v \log(\psi)) - \Tr(a \psi^{-1}D_v^2 \psi)\right) + r^2\psi c_r f_r.
\end{split}
\end{equation*}
Above, we used~\eqref{e:holder1} and that $\|f_r\|_{L^\infty} = \|f\|_{L^\infty}$. Note that $\psi$ is compactly supported in $x$. Thus, by the maximum principle, we find $F(t,x,v) \leq \overline F(t)$ on $[0,t_1]\times \R^3 \times B_R(0)$.  In particular, we have
\begin{equation*}
	F(t_1,x_1,v_1)
		\leq e^{t_1 C \langle v_0\rangle^{(2+\gamma)_+}}\left(\|F(0,\cdot,\cdot)\|_{L^\infty(\R^6)}
		+ \sup_{s\in[0,t_1], x\in\R^3, |v| = R} F(s,x,v)_+
		+ t_1 r^2\|c\|_{L^\infty}\|f\|_{L^\infty}\right).
\end{equation*}
For the lower bound of $F(t_1,x_1,v_1)$, consider
\[\underline F(t,x,v) = -e^{t C\langle v_0\rangle^{(2+\gamma_+}} \left( \|F(0)\|_{L^\infty} + \sup_{s\in[0,t_1], x\in\R^3, |v| = R} F(s,x,v)_-\right).\]
By an argument similar to the one for $\overline F$, but simpler because one can use $0 \leq r^2 \psi c_r f_r$, we have $F(t,x,v) \geq \underline F(t)$ on $[0,t_1]\times \R^3\times B_R(0)$. 
Altogether, we have
\begin{equation}\label{e:holder2}
	|F(t_1,x_1,v_1)|
		\leq e^{t_1 C \langle v_0\rangle^{(2+\gamma)_+}}\left(\|F(0,\cdot,\cdot)\|_{L^\infty(\R^6)}
		+ \sup_{s\in[0,t_1], x\in\R^3, |v| = R} |F(s,x,v)|
		+  r^2\|c\|_{L^\infty}\|f\|_{L^\infty}\right).
\end{equation}

\medskip

\noindent {\em Step 3: Quantitative bounds on the right hand side of~\eqref{e:holder2}.} 
Unpacking the coordinate transformations and using the decay of $\phi$, it is easy to verify that
\begin{equation}\label{e:holder3}
	\|F(0,\cdot,\cdot)\|_{L^\infty(\R^6)}
		\lesssim r^\alpha \left( [f(0,\cdot,\cdot)]_{\Ckin^\alpha(B_2(0,v_0))} + \|f(0,\cdot,\cdot)\|_{L^\infty(\R^6)}\right).
\end{equation}
Indeed, fix any $(x,v) \in \R^6$.  We consider two cases.  First, if $r^3|x|, r|v| \leq 2$, then 
\[\begin{split}
	|\psi(0,x,v)(f_r(0,x,v) &- f(0,0,v_0))|
		\lesssim \langle x\rangle^{-3} \vv^{-3} |f(0, r^3 x, rv + v_0) - f(0,0,v_0)|\\
		&\leq \langle x\rangle^{-3} \vv^{-3}\rho((0,r^3x, rv + v_0), (0,0,v_0))^\alpha [f]_{\Ckin^\alpha(Q_2(0,0,v_0))}\\
		&= \langle x\rangle^{-3} \vv^{-3} r^\alpha(|x|^{1/3} + |v|)^\alpha [f]_{\Ckin^\alpha(Q_2(0,0,v_0))}
		\leq r^\alpha [f]_{\Ckin^\alpha(Q_2(0,0,v_0))}.
\end{split}\]
Second, if $r^3|x|$ or $r|v|$ is greater than $2$, then  
\[
	|\psi(0,x,v)(f_r(0,x,v) - f(0,0,v_0))|
		\lesssim \frac{\|f(0,\cdot,\cdot)\|_{L^\infty(\R^6)}}{\langle x\rangle^3 \langle v\rangle^3}
		\lesssim r^3 \|f(0,\cdot,\cdot)\|_{L^\infty(\R^6)}
		\leq r^\alpha \|f(0,\cdot,\cdot)\|_{L^\infty(\R^6)},
\]
since $r\leq 1$. Hence~\eqref{e:holder3} holds.

Next, we check the boundary term.  Indeed, if $(s,x,v) \in [0,t_1]\times \R^3\times \partial B_R(0)$, then
\[
	|F(s,x,v)|
		= \psi(s,x,v) |\tilde F(s,x,v)|
		\lesssim \frac{\|f\|_{L^\infty}}{\langle x - vs\rangle^3 \langle v\rangle^3}
		\lesssim \frac{r^3\|f\|_{L^\infty}}{\langle v_0\rangle^9}
		\leq r^\alpha \|f\|_{L^\infty}.
\]
We conclude that
\begin{equation}\label{e:holder3b}
	\sup_{s\in[0,t_1], x\in \R^3,|v|=R} |F(s,x,v)|
		\lesssim r^\alpha \|f\|_{L^\infty}.
\end{equation}

Plugging~\eqref{e:holder3} and~\eqref{e:holder3b} into~\eqref{e:holder2}, and using the fact that $t_1 \leq \langle v_0\rangle^{-(2+\gamma)_+}$, we find
\begin{equation}\label{e:holder5}
	|F(t_1,x_1,v_1)|
		\lesssim e^{C \langle v_0 \rangle^{2+\gamma} t} r^\alpha \left( [f(0,\cdot,\cdot)]_{\Ckin^\alpha(B_2(0,v_0))} +  \|f\|_{L^\infty}\right).
\end{equation}

\noindent {\em Step 4: Unpacking the transformations to obtain the desired estimate.}
Since $|t_1|, |x_1|, |v_1| < 1$, then $\psi(t_1,x_1,v_1) = 1$ and $v_1 \in B_R(0)$.  We thus conclude, from~\eqref{e:holder5}, that
\[\begin{split}
	|f_r(t_1,x_1,v_1) - f(0,0,v_0)|
		&= F(t_1,x_1,v_1)
		\lesssim e^{C \langle v_0 \rangle^{2+\gamma} t} r^\alpha \left( [f(0,\cdot,\cdot)]_{\Ckin^\alpha(B_2(0,v_0))} + \|f\|_{L^\infty}\right).
\end{split}\]
Hence, the proof is finished once we establish~\eqref{e:holder1}, which is the last step.

\medskip

\noindent {\em Step 5: The coefficient bounds~\eqref{e:holder1}.} We show that the first term on the left in \eqref{e:holder1} is bounded by $C\langle v_0\rangle^{(2+\gamma)_+}$. We omit the proof for the second term, which is similar

Notice that $\nabla_v\psi(t,x,v) = (\nabla_v - t \nabla_x)\phi( x-vt, v)$.  Using the non-negative definiteness of $a$, we obtain
\begin{equation}\label{e:holder4}
	|\nabla_v \log(\psi)\cdot (a_r \nabla_v \log(\psi))|
		\leq 2 \left( \nabla_v\log(\phi) \cdot (a_r \nabla_v \log(\phi)) + t^2 \nabla_x \log(\phi) \cdot( a_r \nabla_x \log(\phi))\right). 
\end{equation}
In the above, we have abused notation in the following way: $a_r$ is evaluated at $(t,x,v)$ while $\phi$ is evaluated at $(x-vt,v)$.

For the first term on the right in~\eqref{e:holder4}, we find from \eqref{e:cutoff} that
\begin{equation}\label{e:first-term}
	\nabla_v\log(\phi) \cdot (a_r \nabla_v \log(\phi))
		\leq |a_r| |\nabla_v \log(\phi)|^2
		\lesssim \frac{\langle v_0 + rv\rangle^{2+\gamma}}{\langle v\rangle^2}.
\end{equation}
It is straightforward to establish, by considering the four cases when $|rv|$ is comparable or not to $|v_0|$ and when $\gamma \geq -2$ or $\gamma<-2$, that the right hand side is bounded by $\langle v_0\rangle^{2+\gamma}$.

The second term in \eqref{e:holder4} can be handled as follows. When $\gamma < -2$, since all terms are bounded, we find
\[
	t^2 \nabla_x\log(\phi)\cdot (a_r \nabla_x \log(\phi))
		\lesssim 1
		\leq \langle v_0\rangle^{(2+\gamma)_+}.
\]
Next, consider the case when $2 + \gamma \geq 0$. Again using~\eqref{e:cutoff}, we find
\[
	t^2 \nabla_x\log(\phi) \cdot (a_r \nabla_x \log(\phi))
		\leq t^2 |a_r| |\nabla_x \log(\phi)|^2
		\lesssim t^2 \frac{\langle v_0 + rv\rangle^{2+\gamma}}{\langle x-vt\rangle^2}.
\]
This case is more subtle than \eqref{e:first-term} because the $x-vt$ term in the denominator may not be large, even if $v$ is large.  Therefore, we must use the smallness of $t$ to balance the fact that $\langle v_0 + rv\rangle$ may be large.   Recall that $|v| \leq R = 2\langle v_0\rangle^{3}/r$ and $t \leq \langle v_0\rangle^{-(2+\gamma)_+}$.  Since $\langle x-vt\rangle \geq 1$, we have
\[
	t^2 \frac{\langle v_0 + rv\rangle^{2+\gamma}}{\langle x-vt\rangle^2}
		\lesssim \langle v_0\rangle^{-2(2+\gamma)} \langle v_0\rangle^{3(2+\gamma)}
		= \langle v_0\rangle^{2+\gamma}
		= \langle v_0\rangle^{(2+\gamma)_+},
\]
as desired.  This completes the proof of \eqref{e:holder1}, which completes the proof of the lemma.
\end{proof}

\section{Interpolation Lemmas}\label{s:interp}

Here, we collect the technical interpolation lemmas used in Section \ref{s:holder}. The first lemma is for functions on $\R^3$ with the standard Euclidean H\"older metric:

\begin{lemma}\label{lem:holder_interpolation} 
Suppose that $d\in \N$ and $\phi:\R^d \rightarrow \R$ is a bounded $C_{\rm loc}^{2,\beta}$ function, with $\beta \in (0,1)$.  For any $\alpha \in (0,1)$, 
\begin{equation}\label{e:D2_Holder_Holder_interpolation}
	\|D^2 \phi \|_{L^\infty(B_1(z))}
		\lesssim [\phi]_{C^\alpha(B_2(z))}
			+ [\phi]_{C^\alpha(B_2(z))}^\frac{\beta}{2 + \beta - \alpha}
				[D^2 \phi]_{C^\beta(B_2(z))}^{ 1- \frac{\beta}{2+\beta - \alpha}}.
\end{equation}
\end{lemma}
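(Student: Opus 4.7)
The plan is to prove this via the classical second-order finite-difference approach, optimizing the step size to balance the $C^\alpha$ and $C^{2,\beta}$ contributions. The key identity is that for a symmetric matrix $M$, the operator norm satisfies $\|M\| \approx \sup_{|e|=1}|\langle Me, e\rangle|$ (using polarization to recover off-diagonal entries), so it suffices to bound $D^2\phi(y)(e,e)$ uniformly in $y \in B_1(z)$ and $|e|=1$.

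First I would fix $y \in B_1(z)$, a unit vector $e$, and a scale $h \in (0,1]$ (chosen so that $y \pm he \in B_2(z)$). A Taylor expansion of $\phi$ to second order at $y$, combined with the symmetric finite difference, gives
\begin{equation*}
  \phi(y+he) - 2\phi(y) + \phi(y-he) = h^2 D^2\phi(y)(e,e) + R(h),
\end{equation*}
where the remainder satisfies $|R(h)| \lesssim h^{2+\beta}[D^2\phi]_{C^\beta(B_2(z))}$, directly from the $C^\beta$ regularity of $D^2\phi$. On the other hand, two applications of the $C^\alpha$ bound on $\phi$ control the left-hand side by $2 h^\alpha [\phi]_{C^\alpha(B_2(z))}$. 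Rearranging and dividing by $h^2$, I obtain
\begin{equation*}
  |D^2\phi(y)(e,e)| \lesssim h^{\alpha - 2}[\phi]_{C^\alpha(B_2(z))} + h^\beta [D^2\phi]_{C^\beta(B_2(z))}.
\end{equation*}

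Next I would optimize over $h$. Setting the derivative in $h$ to zero gives the formal optimal choice
\begin{equation*}
  h_* = \Bigl( [\phi]_{C^\alpha(B_2(z))} / [D^2\phi]_{C^\beta(B_2(z))} \Bigr)^{1/(2+\beta-\alpha)},
\end{equation*}
and substituting back yields exactly the second term on the right-hand side of \eqref{e:D2_Holder_Holder_interpolation}, with the desired exponents $\beta/(2+\beta-\alpha)$ and $1-\beta/(2+\beta-\alpha) = (2-\alpha)/(2+\beta-\alpha)$. This proves the inequality in the regime $h_* \leq 1$.

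The only case that requires care is when $h_* > 1$, i.e.\ when $[D^2\phi]_{C^\beta(B_2(z))} \leq [\phi]_{C^\alpha(B_2(z))}$; in that case I simply take $h = 1$ in the displayed bound, which gives $|D^2\phi(y)(e,e)| \lesssim [\phi]_{C^\alpha(B_2(z))} + [D^2\phi]_{C^\beta(B_2(z))} \lesssim [\phi]_{C^\alpha(B_2(z))}$, absorbed by the first term in \eqref{e:D2_Holder_Holder_interpolation}. No step is particularly delicate, and there is no genuine obstacle beyond keeping the exponent bookkeeping straight; the main thing to verify is the exponent arithmetic in the optimization step, which is routine.
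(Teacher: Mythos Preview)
Your argument is correct. The route differs from the paper's: the paper first bounds $|\nabla\phi|$ via a one-sided difference (introducing a parameter $\sigma$), then bounds $|D^2\phi|$ via a second Taylor expansion (introducing a parameter $r$), and finally substitutes the gradient bound back in and absorbs the resulting $\|D^2\phi\|_{L^\infty}$ term from the right into the left by taking $\sigma = cr$ small. Your use of the \emph{symmetric} second difference $\phi(y+he)-2\phi(y)+\phi(y-he)$ kills the first-order term outright, so no intermediate gradient estimate and no absorption step are needed; you arrive directly at the one-parameter inequality $|D^2\phi(y)(e,e)|\lesssim h^{\alpha-2}[\phi]_{C^\alpha}+h^\beta[D^2\phi]_{C^\beta}$ and optimize. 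This is genuinely shorter. The paper's approach, on the other hand, also produces the auxiliary bound \eqref{e:D1_bound_interpolation} on $\nabla\phi$ as a byproduct, which could be useful elsewhere but is not needed for the lemma as stated. One minor remark: for a real symmetric matrix the identity $\|M\|=\sup_{|e|=1}|\langle Me,e\rangle|$ is exact by the spectral theorem, so the polarization comment is unnecessary.
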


\begin{proof}
 Let $z\in \R^d$ be a given point. Taking $x \in B_1(z)$ and $h\in \R^d$ sufficiently small,
we see that
\begin{equation*}
 \phi(x+h)-\phi(x) = \nabla \phi(\xi_1) h = \nabla \phi(x) h + (\nabla \phi(\xi_1)- \nabla \phi(x)) h,
\end{equation*}
where $\xi_1 = x + \theta_1 h$ for some $\theta_1 \in (0,1)$. Let $\sigma>0$ be a constant, to be chosen later. If $|h| \leq \sigma$, then
\begin{equation*}
 |\nabla \phi(\xi_1)-\nabla \phi(x)| \leq \sigma \| D^2 \phi \|_{L^\infty(B_{\sigma}(x))}.
\end{equation*}
If we let $h = \sigma \nabla \phi(x)/|\nabla \phi(x)|$, we then get
\begin{equation}\label{e:D1_bound_interpolation}
	|\nabla \phi(x)|
		\leq \sigma^{\alpha-1} [\phi]_{C^\alpha(B_{\sigma}(x))}
			+ \sigma \| D^2 \phi \|_{L^\infty(B_{\sigma}(x))}.
\end{equation}
Now, taking the Taylor expansion of $\phi$ to first order about a point $x \in B_1(z)$ gives
\begin{equation*}
	 \phi(x+h) = \phi(x) + \nabla \phi(x) h + \frac 1 2 h D^2 \phi (\xi_2) h,
\end{equation*}
where $\xi_2 = x + \theta_2 h$ for some $\theta_2 \in (0,1)$. We rewrite this as
\begin{equation*}
	h D^2 \phi(x) h
		= 2(\phi(x+h) - \phi(x)) - 2\nabla \phi(x) h + h (D^2 \phi(x) - D^2 \phi(\xi_2)) h.
\end{equation*}
Since $D^2\phi(x)$ is a real symmetric matrix, its matrix norm is given by its largest
eigenvalue, which is achieved by some unit eigenvector $\hat v$. With $r>0$ to be chosen later, setting $h = r \hat v$ yields
\begin{equation*}
\begin{split}
	r^2 | D^2 \phi(x) |
		&= 2 (\phi(x+h) - \phi(x)) - 2\nabla \phi(x) h + h (D^2 \phi(x) - D^2 V(\xi)) h \\
		&\leq 2r^\alpha [\phi]_{C^\alpha(B_r(x))} + 2 r \| \nabla \phi \|_{L^\infty(B_r(x))}
			+ r^{2+\beta} [D^2 \phi]_{C^\beta(B_r(x))}.
\end{split}
\end{equation*}
Taking the supremum in $x$ over $B_1(z)$, we get that
\begin{equation*}
	\| D^2 \phi \|_{L^\infty(B_1(z))}
		\lesssim r^{\alpha-2} [\phi]_{C^\alpha(B_{1+r}(z))}
			+r^{-1} \| \nabla \phi \|_{L^\infty(B_{1+r}(z))}
			+ r^\beta [D^2 \phi]_{C^\beta(B_{1+r}(z))}.
\end{equation*}
Using \eqref{e:D1_bound_interpolation} on the middle term yields
\begin{equation}\label{e:mixed_interpolation_bound}
\begin{split}
	\| D^2 \phi \|_{L^\infty(B_1(z))}
		&\lesssim \left( r^{\alpha-2} + r^{-1}{\sigma}^{\alpha-1}\right)[\phi]_{C^\alpha(B_{1+r+\sigma}(z))}\\
		&\quad\quad\quad+\sigma r^{-1} \| D^2 \phi \|_{L^\infty(B_{1+r+\sigma}(x))}
+ r^\beta [D^2 \phi]_{C^\beta(B_{1+r}(z))}.
\end{split}
\end{equation}
We can then 
take $\sigma = c r$ for $c<1$ small enough that we can absorb the middle term into the norm on the left hand side.  As long as $r < 1/2$, this yields
\begin{equation*}
	\|D^2 \phi \|_{L^\infty}
		\lesssim r^{\alpha-2}  [\phi ]_{C^\alpha(B_2(z))}
			+ r^\beta  [D^2 \phi]_{C^\beta(B_2(z))}.
\end{equation*}
Thus, choosing
\begin{equation*}
	r
		= \min \left\{
			1/2,
			\ \  \left( [\phi ]_{C^\alpha(B_2(z))} /
				[D^2 \phi]_{C^\beta(B_2(z))}\right)^\frac{1}{2 + \beta - \alpha} \right\},
\end{equation*}
then we obtain \eqref{e:D2_Holder_Holder_interpolation}.
\end{proof}

%

Our next, somewhat elementary lemma allows us to trade regularity (in the form of a larger H\"older exponent) for pointwise decay.  It is stated for the (time-independent) kinetic H\"older spaces, however, it is straightforward to prove it for the time-dependent kinetic and standard H\"older spaces as well.  

\begin{lemma}\label{lem:holder_decay}
	Suppose that $\phi:\R^6\to \R$ is such that $\phi\in L^{\infty,k_1}(\R^6)$ and $\vv^{k_2}\phi\in \Ckin^\alpha(\R^6)$, for some $\alpha\in (0,1)$ and $k_1 \geq k_2 \geq 0$.  If $\beta \in (0,\alpha)$ and $\ell \in [k_2, k_1]$ are such that
	\[
		\ell
			\leq k_1 \left(1 - \frac{\beta}{\alpha}\right) + k_2 \frac{\beta}{\alpha},
	\]
	then
	\[
		\|\langle v\rangle^\ell \phi\|_{\Ckin^{\beta}(\R^6)}
			\lesssim [\vv^{k_2}\phi]_{\Ckin^\alpha(\R^6)}^\frac{\beta}{\alpha} \|\phi\|_{L^{\infty,k_1}(\R^6)}^{1-\frac{\beta}{\alpha}}.
	\]
	The same result is true with the standard H\"older spaces replacing the kinetic ones.
\end{lemma}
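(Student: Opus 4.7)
The plan is to reduce to a cleaner reformulation and then prove the inequality pointwise by splitting into two velocity regimes. Set $g = \vv^{k_2}\phi$, $m = \ell - k_2 \geq 0$, $n = k_1 - k_2 \geq 0$, and $\theta = \beta/\alpha \in (0,1)$. Noting that $\|\vv^n g\|_{L^\infty} = \|\phi\|_{L^{\infty,k_1}}$, the hypothesis $\ell \leq k_1(1-\theta) + k_2\theta$ becomes $m \leq n(1-\theta)$. Writing $A = [g]_{\Ckin^\alpha}$ and $B = \|\phi\|_{L^{\infty,k_1}}$, the problem reduces to showing $[\vv^m g]_{\Ckin^\beta} \lesssim A^\theta B^{1-\theta}$; the $L^\infty$ part of the target norm is immediate from $\|\vv^\ell \phi\|_{L^\infty} \leq B$, since $\ell \leq k_1$.

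The basic tool is the elementary interpolation of the two pointwise estimates $|g(z) - g(z')| \leq A\rho(z,z')^\alpha$ and $|g(z) - g(z')| \leq 2B\min(\vv,\langle v'\rangle)^{-n}$, which gives
\[
|g(z) - g(z')| \leq 2^{1-\theta}A^\theta B^{1-\theta}\rho(z,z')^\beta\min(\vv,\langle v'\rangle)^{-n(1-\theta)}.
\]
Fix $z, z'$ with $r := \rho(z,z')$; by symmetry, take $\vv \geq \langle v'\rangle$. In the \emph{close} regime $r \leq \vv/8$, we have $\vv \approx \langle v'\rangle$, and I would decompose $\vv^m g(z) - \langle v'\rangle^m g(z') = \vv^m[g(z) - g(z')] + (\vv^m - \langle v'\rangle^m) g(z')$. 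The first term is bounded using the displayed estimate, the factor $\vv^{m-n(1-\theta)} \leq 1$ being absorbed thanks to the hypothesis. For the second, the mean value theorem gives $|\vv^m - \langle v'\rangle^m| \lesssim \vv^{m-1}r$, and $|g(z')|$ is bounded by interpolating $B\langle v'\rangle^{-n}$ against the refined estimate $\|g\|_\infty \lesssim A^{n/(n+\alpha)}B^{\alpha/(n+\alpha)}$, itself obtained by introducing an auxiliary far-away velocity $v''$ and optimizing $A|v'-v''|^\alpha + B\langle v''\rangle^{-n}$ over the radius.

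In the \emph{far} regime $r > \vv/8$ we have $\vv, \langle v'\rangle \lesssim r$, and the same refined interpolation for $|g(z)|$ yields $|g(z)| \lesssim A^\theta B^{1-\theta}\vv^{\beta - n(1-\theta)}$, from which $|\vv^m g(z)| \lesssim A^\theta B^{1-\theta}\vv^{m + \beta - n(1-\theta)} \lesssim A^\theta B^{1-\theta}r^\beta$, where the final inequality uses $m \leq n(1-\theta)$ together with $\vv \leq Cr$. The analogous bound for $|\langle v'\rangle^m g(z')|$ completes this regime.

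The main obstacle throughout is the factor $g(z')$ appearing in the $(\vv^m - \langle v'\rangle^m)g(z')$ term: the naive bound $|g(z')| \leq B\langle v'\rangle^{-n}$ is too coarse at small velocities and introduces a spurious factor of $(B/A)^\theta$ that can be arbitrarily large. The resolution is the refined Hölder-plus-decay bound on $|g|$ recalled above, after which the exponents of $A$, $B$, $r$, and $\vv$ balance exactly thanks to $m \leq n(1-\theta)$. The same argument applies verbatim to the standard (non-kinetic) Hölder spaces, since it uses only the property $|v - v'| \leq \rho(z,z')$ of the kinetic distance.
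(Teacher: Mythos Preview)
Your opening reduction and the ``basic interpolation'' display are sound, and in fact this single line already reproduces the entire content of the paper's proof: the paper splits on the threshold $R = (\langle v\rangle^{k_2-k_1}B/A)^{1/\alpha}$, which is precisely where $A\rho^\alpha$ crosses $2B\langle v\rangle^{-n}$, and the two cases yield exactly your product bound. The paper then stops, having established only the \emph{unweighted} increment estimate
\[
\frac{|\phi(z_1)-\phi(z_2)|}{\rho(z_1,z_2)^\beta}\;\lesssim\;A^{\beta/\alpha}B^{1-\beta/\alpha}\,\langle v\rangle^{-\ell^*},\qquad \ell^*=k_1\Big(1-\tfrac{\beta}{\alpha}\Big)+k_2\tfrac{\beta}{\alpha},
\]
and never addresses the weight-variation term you isolate.

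Where your argument departs from the paper, it runs into a genuine obstruction. The refined bound $\|g\|_\infty\lesssim A^{n/(n+\alpha)}B^{\alpha/(n+\alpha)}$ is correct, but interpolating it against $B\langle v'\rangle^{-n}$ can only produce the target exponent pair $(\theta,1-\theta)$ when the interpolation parameter $s=1-\theta-\beta/n$ lies in $[0,1]$; this fails once $\beta>n\alpha/(n+\alpha)$. The same constraint blocks your far-regime claim $|g(z)|\lesssim A^\theta B^{1-\theta}\langle v\rangle^{\beta-n(1-\theta)}$. (You also assert the $L^\infty$ part is immediate from $\|\langle v\rangle^\ell\phi\|_\infty\leq B$, but the lemma asks for $A^\theta B^{1-\theta}$ on the right, not $B$.) In fact the stronger inequality you are trying to prove is false in general: with $k_2=0$, $k_1=1$, $\alpha=\tfrac12$, $\beta=\tfrac25$, $m=\tfrac15$, and $\phi(v)=\min(1,L/|v|)$, one has $A\approx L^{-1/2}$, $B\approx L$, $A^\theta B^{1-\theta}\approx L^{-1/5}$, yet the increment of $\langle v\rangle^m\phi$ between $v=0$ and $|v|=1$ is a fixed positive constant independent of $L$.

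So your extra work on the weight term is not salvageable with the stated exponents; the paper's shorter proof sidesteps the issue by proving only the localized bound above, which is all that is actually used downstream (the application after Theorem~1.3 needs only finiteness of $\|\langle v\rangle^m f\|_{C^\beta}$, and the application in~\eqref{e:interp1} has $k_1=k_2=0$).
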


\begin{proof} 


We omit the proof with the standard H\"older spaces since it is exactly the same.  Let
\[
	R = \left(\vv^{k_2 - k_1} \|\phi\|_{L^{\infty,k_1}(\R^6)} [\vv^{k_2}\phi]_{\Ckin^\alpha(\R^6)}^{-1}\right)^{1/\alpha}.
\]
Let $(x_1,v_1), (x_2,v_2) \in \R^6$. If $|x_1-x_2|^{1/3}+|v_1-v_2| \geq R$, then
\[\begin{split}
	\frac{|\phi(x_1, v_1) - \phi(x_2, v_2)|}{(|x_1-x_2|^{1/3}+|v_1-v_2|)^\beta}
		&\lesssim \langle v\rangle^{- k_1} \frac{2 \|\phi\|_{L^{\infty, k_1}(\R^6)}}{R^\beta}
		= \frac{[\vv^{k_2} \phi]_{\Ckin^\alpha(B)}^\frac{\beta}{\alpha} \|\phi\|_{L^{\infty,k_1}(\R^6)}^{1-\frac{\beta}{\alpha}}}{\langle v\rangle^{k_1\left(1 - \frac{\beta}{\alpha}\right) + k_2 \frac{\beta}{\alpha}}}.
\end{split}\]

On the other hand, suppose that $|x_1-x_2|^{1/3}+|v_1-v_2| < R$.  Let $\theta = |x_1-x_2|^{1/3}+|v_1-v_2|$.  We have 
\[
	\frac{|\phi(x_1, v_1) - \phi(x_2, v_2)|}{\theta^\beta}
		= \frac{|\phi(x_1, v_1) - \phi( x_2, v_2)|}{\theta^\alpha} \frac{\theta^\alpha}{\theta^\beta} \lesssim \langle v\rangle^{-k_2} [\vv^{k_2} \phi]_{\Ckin^\alpha(B)} R^{\alpha - \beta}.
\]
From the definition of $R$, we obtain
\[
	\frac{|\phi(x_1, v_1) - \phi(x_2, v_2)|}{\theta^\beta}
		\leq \frac{[\vv^{k_2}\phi]_{\Ckin^\alpha(B)}^\frac{\beta}{\alpha} \|\phi\|_{L^{\infty,k_1}(\R^6)}^{1-\frac{\beta}{\alpha}}}{\langle v\rangle^{k_1\left(1 - \frac{\beta}{\alpha}\right) + k_2\frac{\beta}{\alpha}}}.
\]
\end{proof}

\bibliographystyle{abbrv}
\bibliography{landau}

\end{document}